\theoremstyle{plain}      
\newtheorem{thm}{Theorem}[section]     
\newtheorem{theorem}[thm]{Theorem}     
\newtheorem{corollary}[thm]{Corollary}     
\newtheorem{lemma}[thm]{Lemma}     
\newtheorem{proposition}[thm]{Proposition}     
\theoremstyle{remark}      
\newtheorem*{rem*}{Remark}
\newtheorem*{ex*}{Example}
\newtheorem*{theorem*}{{\bf Theorem}}
\theoremstyle{definition}
\newtheorem{example}[thm]{Example} 
\newtheorem{remark}[thm]{Remark}
\def\al{{\alpha}}
\def\om{{\omega}}
\def\ga{{\gamma}}
\def\epsilon{{\varepsilon}}
\def\ep{{\varepsilon}}
\def\phi{{\varphi}}
\def\diff{\mbox{\sl Diff}}
\DeclareMathAlphabet{\doba}{U}{msb}{m}{n}
\gdef\mR{\doba{R}}
\gdef\mZ{\doba{Z}}
\def\Spin{{\mathop{\rm Spin}}}
\def\tr{{\mathop{\rm Tr}}}
\let\na\nabla     
\let\witi\widetilde
\def\End{{\mathop{\rm End}}}
\def\Aut{{\mathop{\rm Aut}}}
\def\Hom{{\mathop{\rm Hom}}}
\def\Imm{{\mathop{\rm Imm}}}
\newcommand{\definedas}{\mathrel{\raise.095ex\hbox{\rm :}\mkern-5.2mu=}}
\newcounter{mnotecount}[section]
\newcommand{\U}{\mr{U}}
\newcommand{\SU}{\mr{SU}}
\newcommand{\SO}{\mr{SO}}
\newcommand{\Sp}{\mr{Sp}}
\newcommand{\GL}{\mr{GL}}
\newcommand{\id}{\text{Id}}
\newcommand{\del}{\bar {\partial}}
\newcommand{\mr}{\mathrm}
\newcommand{\mc}{\mathcal}
\newcommand{\R}{\mathbb{R}}
\newcommand{\Z}{\mathbb{Z}}
\newcommand{\C}{\mathbb{C}}
\newcommand{\Id}{\mr{Id}}
\newcommand{\beq}{\begin{equation}}
\newcommand{\ben}{\begin{equation}\nonumber}
\newcommand{\ee}{\end{equation}}
\newcommand{\grad}{\mr{grad}\,}
\newcommand{\Ric}{\mr{Ric}\,}
\renewcommand{\div}{\mathop{\mathrm{div}}}
\let\divv\div
\newcommand{\arsinh}{\operatorname{arsinh}}
\newenvironment{lemproof}{\begin{trivlist}\item[]{\it Proof.}}
 {\end{trivlist}}
\long\def\wegwegenreferee#1{}
\begin{document}     
\title{The spinorial energy functional on surfaces}
 
\author{Bernd Ammann} 
\address{Fakult\"at f\"ur  Mathematik \\
Universit\"at Regensburg \\ Universit\"atsstra{\ss}e 40 \\
D--93040 Regensburg \\  Germany}
\email{bernd.ammann@mathematik.uni-regensburg.de}

\author{Hartmut Wei\ss{}} 
\address{Mathematisches Seminar der Universit\"at Kiel\\ Ludewig-Meyn Stra{\ss}e 4\\ D--24098 Kiel\\ Germany}
\email{weiss@math.uni-kiel.de}

\author{Frederik Witt} 
\address{Institut f\"ur Geometrie und Topologie der Universit\"at Stuttgart\\ Pfaffenwaldring 57\\ D--70569 Stuttgart\\ Germany}
\email{frederik.witt@mathematik.uni-stuttgart.de}

\begin{abstract}
This is a companion paper to \cite{amwewi12} where we introduced the spinorial energy functional and studied its main properties in dimensions equal or greater than three. In this article we focus on the surface case. A salient feature here is the scale invariance of the functional which leads to a plenitude of critical points. Moreover, via the spinorial Weierstra{\ss} representation it relates to the Willmore energy of periodic immersions of surfaces into $\R^3$.
\end{abstract}

\maketitle
 
%
%
%
\section{Introduction}
Let $M^n$ be a closed spin manifold of dimension $n$ with a fixed spin structure $\sigma$. If~$g$ is a Riemannian metric on $M$, we denote by $\Sigma_gM\to M$ the associated spinor bundle. The spinor bundles for all possible choices of $g$ may be assembled into a single fiber bundle $\Sigma M\to M$, the so-called {\em universal spinor bundle}. A section $\Phi\in\Gamma(\Sigma M)$ determines a Riemannian metric $g=g_\Phi$ and a $g$-spinor $\phi=\phi_\Phi\in\Gamma(\Sigma_gM)$ and vice versa. In particular, one can split the tangent space of $\Sigma M$ at $(g_x,\phi_x)$ into a ``horizontal part'' $\odot^2T^*_x\!M$ and a ``vertical'' part $(\Sigma_gM)_x$ (see~\cite{amwewi12} for further explanation). Furthermore, let $S(\Sigma M)$ denote the universal bundle of unit spinors, i.e.\ $S(\Sigma M)=\{\Phi \in \Sigma M\mid|\Phi|= 1 \}$, and $\mc{N} = \Gamma(S(\Sigma M))$ its space of {\em smooth} sections. If we identify $\Phi$ with the pair $(g,\phi)$ we can consider the {\em spinorial energy functional}
\ben
\mc{E}:\mc{N}\rightarrow\R_{\geq 0},\quad(g,\phi)\mapsto\tfrac{1}{2}\int_M|\nabla^g\phi|_g^2\,dv^g
\ee
introduced in~\cite{amwewi12}. Here, $\nabla^g$ denotes the Levi-Civita connection, $|\cdot|_g$ the pointwise norm on spinors in $\Sigma_gM$, and integration is performed with respect to the associated Riemannian volume form $dv^g$. The functional is invariant under the $\Z_2$-extension of the spin diffeomorphism group and rescales as
\beq\label{rescaling}
\mc{E}(c^2g,\phi)=c^{n-2}\mc{E}(g,\phi)
\ee
under homothetic change of the metric by $c>0$. The negative gradient of $\mc{E}$ can be viewed as a map
\beq\label{gradmap}
Q:\mc N\to T\mc N,\quad\Phi\in\mc N \mapsto\bigl(Q_1(\Phi),Q_2(\Phi)\bigr)\in\Gamma(\odot^2T^*\!M) \times \Gamma(\phi^{\perp_g})
\ee
(for a curve $\phi_t$ with $|\phi_t|=1$, $\dot\phi$ must be pointwise perpendicular to $\phi$). In~\cite{amwewi12} we showed that 
for $\Phi = (g,\phi)\in \mc{N}$ we have
\beq\label{neggrad}
\begin{array}{l}
Q_1(\Phi)=-\tfrac{1}{4}|\nabla^g\phi|^2_gg-\tfrac{1}{4}\div_g T_{g,\phi}+\tfrac{1}{2}\langle\nabla^g\phi\otimes\nabla^g\phi\rangle,\\[5pt]
Q_2(\Phi)=-\nabla^{g\ast}\nabla^g\phi+|\nabla^g\phi|^2_g\phi,
\end{array}
\ee
where $T_{g,\phi}\in\Gamma(T^*\!M\otimes\odot^2T^*\!M)$ is the symmetrisation 
in the second and third component of the $(3,0)$-tensor defined by $\<(X\wedge Y)\cdot\phi,\nabla^g_Z\phi\>$ for $X$, $Y$ and $Z$ in $\Gamma(TM)$. Further, $\langle\nabla^g\phi\otimes\nabla^g\phi\rangle$ is the symmetric $2$-tensor defined by $\langle\nabla^g\phi\otimes\nabla^g\phi\rangle(X,Y)=\langle\nabla^g_X\phi,\nabla^g_Y\phi\rangle$.

As a corollary, the critical points for $n\geq3$ are precisely the pairs $(g,\varphi)$ satisfying $\nabla^g\phi=0$, i.e.\ the parallel (unit) spinors. In particular, $g$ must be Ricci-flat and $(g,\phi)$ is an absolute minimiser.

\medskip

The present work investigates the spinorial energy functional on spin surfaces $(M_\gamma,\sigma)$ where $M_\gamma$ is a connected, closed surface of genus $\gamma$ endowed with a fixed spin structure $\sigma$. This differs from the generic case of dimension $n\geq3$ in several aspects. First, the functional is invariant under rescaling by \cref{rescaling}, which leads to a potentially richer critical point structure in two dimensions. Indeed, we will construct in \cref{nonmin.crit.pts} certain flat 2-tori with non-minimising critical points which are saddle points in the sense that the Hessian of the functional is indefinite. In particular, these exist for spin structures which do not admit any non-trivial harmonic spinor. Despite the fact that $\mc E$ does not enjoy any natural convexity property, we note that the existence of the negative gradient flow as shown in \cite{amwewi12} still holds in two dimensions. Second, if $K_g$ denotes the Gau{\ss} curvature of $g$, the Schr\"odinger-Lichnerowicz formula implies
\beq\label{Dirac_energy}
\mc E(g,\phi)=\tfrac{1}{2}\int_M|D_g\varphi|^2\,dv^g-\tfrac{1}{4}\int_MK_g\,dv^g,
\ee
where $D_g$ is the Dirac operator associated with the spinor bundle $\Sigma_gM$. Since the second term in \cref{Dirac_energy} is topological by Gau\ss{}-Bonnet, we obtain immediately the topological lower bound
$$
\inf\mc E\geq\pi|\gamma-1|.
$$
We will show in \cref{inf.E} that we actually have equality. For the infimum we find a trichotomy of well-known spinor field equations. Namely, if $P_g$ is the twistor operator associated with $\Sigma_gM$ (see \cref{twi.spi} for its definition), then $(g,\phi)$ attains the infimum if and only if
$$
\begin{array}{ll}
P_g\phi=0,&\quad\gamma=0\\
\nabla^g\phi=0,&\quad\gamma=1\\
D_g\phi=0,&\quad\gamma\geq2,
\end{array}
$$
which matches the usual trichotomy for Riemann surfaces of positive, vanishing and negative Euler characteristic (\cref{trichotomy}, \cref{min_sphere}). Of course, any parallel spinor $\phi$ is also harmonic, i.e.\ $D_g\phi=0$. On the other hand, harmonic spinors on $M_\gamma$ are related to minimal immersions of the universal cover $\tilde M_\gamma$ into $\R^3$ via the spinorial Weierstra{\ss} representation (see \cite{kusner.schmitt:p95,schmitt:diss} or alternatively~\cite{fr98} where a nice 
presentation closer to our article is given). 
As a result we will be able to construct a plenitude of examples for various spin structures (\cref{min_att_or_not}). In particular, with the notable exception of $\gamma=2$, {\em for any genus} there exist critical points which are in fact absolute minimisers. Finally, we completely classify the critical points on the sphere (\cref{min_sphere}) and the flat critical points on the torus (\cref{classification}). 

\bigskip

\paragraph{\bf General conventions.}
In this article, $M_\gamma$ will denote the up to diffeomorphism unique closed oriented surface of genus $\gamma$. Further, $g$ will always be a Riemannian metric. Rotation on each tangent space by $\pi/2$ in the counterclockwise direction induces a complex structure $J$ which in particular is a $g$-isometry. More concretely, a local positively oriented $g$-orthonormal basis $(e_1,e_2)$ satisfies $Je_1=e_2$ and $Je_2=-e_1$. Conversely, any complex structure determines a conformal class $[g]$ of Riemannian metrics. We will often tacitly identify $(e_1,e_2)$ with the dual basis $(e^1,e^2)$ via the musical isomorphisms $\sharp$ and $\flat$. The Riemannian volume form $\omega_g$ is then locally given by $e_1\wedge e_2$. Further, the dual complex structure $J^*$ acting on $1$-forms is simply $-\star$, where $\star$ is the usual Hodge operator sending $e_1$ to $e_2$ and $e_2$ to $-e_1$. The Levi-Civita connection associated with $g$ will be written as $\nabla^g$. The Gau{\ss} curvature $K_g$ is just half the scalar curvature $s_g$, i.e.\ $2K_g=s_g=-2R_g(e_1,e_2,e_1,e_2)$, where $R_g$ denotes the Riemannian $(4,0)$-curvature tensor defined by $R(e_1,e_2,e_1,e_2)=g([\nabla^g_{e_1},\nabla^g_{e_2}]e_1-\nabla^g_{[e_1,e_2]}e_1,e_2)$. In the sequel we shall often drop any reference to $g$ if the underlying metric is clear from the context. The {\em divergence} of a tensor $T$ is given by
\begin{equation}
\mr{div}_g T=-\sum_{k=1}^n(\nabla^g_{e_k}T)(e_k,\,\cdot\,).
\end{equation} 
Finally, we use the convention $v\odot w:=(v\otimes w+ w\otimes v)/2$ for the symmetrisation of a $(2,0)$-tensor. 
%

\section{Spin geometry}
%
\subsection{Spinors on surfaces}\label{prelim.spinor.surface}
We recall some spin geometric features of surfaces. Suitable general references are~\cite{fr00,lami89}. 

\medskip

Every oriented surface admits a spin structure, i.e.\ a twofold covering of $P_{\GL_+(2)}$, the bundle of positively oriented frames which restricted to a fibre induces the connected $2$-fold covering of $\GL_+(2)$. In particular, spin structures on $M_\gamma$ are classified by elements of $H^1(P_{\GL_+(2)},\Z_2)$ whose restriction to the fibre gives the non-trivial class. From the exact sequence associated with the fibration $\GL_+(2)\to P_{\GL_+(2)}\to M_\gamma$ it follows that spin structures are in $1-1$ correspondence with elements in $H^1(M_\gamma,\Z_2)$. Hence there exist $2^{2\gamma}=\# H^1(M_\gamma,\Z_2)$ isomorphism classes of spin structures on $M_\gamma$.

\medskip

A pair $(M_\gamma,\sigma)$ consisting of a genus $\gamma$ surface and a fixed spin structure $\sigma$ will be called a {\em spin surface}. If, in addition, we also fix a metric, we can consider $\Sigma_gM\to M$, the complex bundle of {\em Dirac spinors} associated with the complex unitary representation $(\Delta,h)$ of $\Spin(2)$. Note that the action of $\omega_g$ splits $\Delta$ into the irreducible $\mp i$ eigenspaces $\Delta_\pm\cong\C$. This gives rise to a global decomposition 
$$
\Sigma_g=\Sigma_{g+}\oplus\Sigma_{g-}
$$
into {\em positive} and {\em negative (Weyl) spinors}. Further, since $\Delta_-\cong\bar\Delta_+$, $\Delta\cong\C\oplus\bar\C$ carries a {\em quaternionic structure}. Equivalently, there exists a $\Spin(2)$-equivariant map $\alpha:\Delta\to\Delta$ which interchanges $\Delta_+$ and $\Delta_-$ and squares to minus the identity. Hence we can think of $\Delta$ as the quaternions $\mathbb H$ with real inner product $\langle\cdot\,,\cdot\rangle:=\mr{Re}\,h$. Locally, we can represent spinors in terms of a local orthonormal basis of the form $(\phi,e_1\cdot\phi,e_2\cdot\phi,\omega\cdot\phi)$, where $\phi$ is a unit spinor and $(e_1,e_2)$ a local positively oriented orthonormal basis. In particular,
\beq\label{nabdec}
\nabla_X \phi = A(X)\cdot \phi + \beta(X) \omega\cdot \phi
\ee
for a uniquely determined endomorphism field $A\in \Gamma(\End(TM))$ and a $1$-form $\beta\in\Omega^1(M)$. We also say that the pair $(A,\beta)$ is {\em associated} with $(g,\phi)$. Note that $A$ and $\beta$ determine the spinor field $\phi$ up to a global constant in the following sense. If $\phi_1$ and $\phi_2$ are unit spinor fields, and if they both solve~\cref{nabdec} for $\phi=\phi_i$, then there is a unit quaternion $c$ such that $\phi_1=\phi_2 c$. Hence an orbit of the action of the unit quaternions $\Sp(1)$ on unit spinor fields is determined by a pair $(A,\beta)$ for which a solution to~\cref{nabdec} exists. The question of determining the pairs which can actually arise will be addressed in~\cref{integrability}.

\medskip

As pointed out above, the choice of a Riemannian metric induces a complex and in fact a K\"ahler structure on $M_\gamma$. In particular, we can make use of the holomorphic picture of spinors on Riemann surfaces~\cite{at71,hi74}. Here, spin structures on $(M_\gamma,[g])$ are in 1--1 correspondence with holomorphic square roots $\lambda$ of the canonical line bundle $\kappa_\gamma=T^*\!M^{1,0}$, i.e.\ $\lambda\otimes\lambda\cong\kappa_\gamma$ as {\em holomorphic} line bundles. The corresponding spinor bundle is given by
\ben
\Sigma_g=\Lambda^*TM^{1,0}\otimes\lambda\cong\lambda\oplus\lambda^*
\ee
where we used the identification $TM^{1,0}\cong T^*\!M^{0,1}$ as {\em complex} line bundles. Clifford multiplication is then given by $v\cdot\varphi=\sqrt{2}(v\wedge\varphi-\iota(v^*)\varphi)$ where $v\in T^*\!M^{0,1}$ and $\iota(v^*)$ denotes contraction with the hermitian adjoint of $v$. The resulting even/odd-decomposition $\Sigma_g=\lambda\oplus\lambda^*$ is just the decomposition into positive and negative spinors.
%
\subsection{Dirac operators}~\label{prelim.spinor.surface.two}
Associated with any spin structure is the Dirac operator 
$$
D_g:\Gamma(\Sigma_gM)\to\Gamma(\Sigma_gM)
$$
which is locally given by $D_g\phi=e_1\cdot\nabla^g_{e_1}\phi+e_2\cdot\nabla^g_{e_2}\phi$. We have the useful formul{\ae}
$$
\omega\cdot D\phi=-D(\om\cdot\phi)\quad\mbox{and}\quad\<\omega\cdot D\phi, D\phi\>+ \< D\phi,\omega\cdot D\phi\>=0.
$$
In particular, for $a,b\in \mR$ with $a^2+b^2=1$ we obtain
\beq\label{circle.action}
|D(a\phi +b \omega\cdot \phi)|^2=a^2|D\phi|^2+b^2|\omega D\phi|^2=|D\phi|^2.
\ee
In terms of the pair $(A,\beta)$ determined by $\phi$ we have
\beq\label{diracpair}
D\phi=\sum_{k=1}^2e_k\cdot A(e_k)\cdot\phi+\beta(e_k)e_k\cdot\omega\cdot\phi=\tr A\,\phi +\tr(A\circ J)\omega\cdot\phi-(\beta\circ J)^\sharp\cdot\phi. 
\ee 
Moreover, restriction of $D_g$ to $\Sigma_{g\pm}$ gives rise to the operators $D_g^\pm:\Gamma(\Sigma_{g\pm})\to\Gamma(\Sigma_{g\mp})$. 

\medskip

A remarkable fact we shall use repeatedly is the conformal equivariance of $D$ in the following sense~\cite{hi74}. If for $u\in C^\infty(M)$ we consider the metric $\tilde g=e^{2u}g$ conformally equivalent to $g$, we have a natural bundle isometry $\Sigma_g\to\Sigma_{\tilde g}$ sending $\phi$ to $\tilde\phi$. Furthermore,
\begin{equation}\label{confdirac}
\tilde D\tilde\phi=e^{-3u/2}\widetilde{De^{u/2}\phi}
\end{equation}
where we let $\tilde D=D_{\tilde g}$. Note that for a vector field $X$ we have $\widetilde{X\cdot\phi}=\tilde X\cdot\tilde\phi$ if $\tilde X=e^{-u}X$~\cite[(1.15)]{bfgk91}. In particular, the dimension of the space of {\em harmonic spinors}, $\ker D$, as well as the spaces of  (complex) {\em positive} and {\em negative harmonic spinors}, $\ker D^+$ and $\ker D^+$, are conformal invariants. This is also manifest in terms of the holomorphic description above. Namely, after choosing a complex structure, i.e.\ a conformal class on $M_\gamma$, and a holomorphic square root $\lambda$ of $\kappa_\gamma$, we have
$$
D\phi=\sqrt{2}(\del_\lambda+\del^*_\lambda)\phi 
$$
where $\del_\lambda:\Gamma(\lambda)\to\Gamma(T^*\!M^{0,1}\otimes\lambda)$ is the induced Cauchy-Riemann operator on $\lambda$ whose formal adjoint is $\del^*_\lambda$. In particular, a positive Weyl spinor $\phi$ is harmonic if and only if the corresponding section of $\lambda$ is holomorphic. Note that $\mr{coker}\,D^+\cong\ker D^-$ so that $\dim\ker D^+=\dim\ker D^-$ by the Atiyah-Singer index theorem. An explicit isomorphism is provided by the quaternionic structure from \cref{prelim.spinor.surface} which maps positive harmonic spinors to negative ones and vice versa.
%
\subsection{Bounding and non-bounding spin structures}\label{bounding.nonbounding}
The orientation-preserving diffeomorphism group $\diff_+(M_\gamma)$ acts on the bundle of oriented frames and therefore permutes the possible spin structures on $M_\gamma$ by its action on $H^1(P_{\GL_+(2)},\Z_2)$ resp.\ $H^1(M_\gamma,\Z_2)$. There are precisely two orbits, namely the orbits of {\em bounding} and {\em non-bounding} spin structures. They contain $2^{\gamma-1}(2^\gamma+1)$ respectively $2^{\gamma-1}(2^\gamma-1)$ elements~\cite{at71}. In particular, on the $2$-torus where $\gamma=1$, there is a unique non-bounding spin structure and three bounding ones. These two orbits correspond to the two spin cobordisms classes of $M_\gamma$~\cite{mi65}. Recall that in general, a spin manifold $(M,\sigma)$ is {\em spin cobordant to zero} if there exists an orientation preserving diffeomorphism to the boundary of some compact manifold so that the naturally induced spin structure on the boundary (see for instance \cite[Proposition II.2.15]{lami89}) is identified with $\sigma$ under this diffeomorphism. Numerically, we can distinguish these two orbits as follows. Fix a complex structure on $M_\gamma$ and identify the set of spin structures with the holomorphic square roots $\mc S(M_\gamma)$ of the resulting canonical line bundle $\kappa_\gamma$. Let $d^+(g):=\dim_\C\ker D^+=\dim_\C H^0(M_\gamma,\lambda)$. Then 
$$
\varrho:\mc S(M_\gamma)\to\Z_2,\quad\varrho(\lambda)\equiv d^+(g)\mod2
$$
is a quadratic function whose associated bilinear form corresponds to the cup product on $H^1(M_\gamma,\Z_2)$. Moreover, $\varrho(\lambda)=0$ if and only if $\lambda$ corresponds to a bounding spin structure~\cite{at71}. For instance, it is well-known that on a torus, $d^+(g)$ is either $0$ or $1$~\cite{hi74}. Therefore, the three bounding spin structures do not admit positive harmonic spinors (regardless of the conformal structure), while the non-bounding one (the generator of the spin cobordism class) admits a harmonic spinor. As a further application, we note that $d(g)=\dim_\C\ker D=2d^+(g)$ is divisible by $4$ if and only if $\Sigma$ is a bounding spin structure.
%

\subsection{The spinorial Weierstra{\ss} representation}\label{spinor.weier}
Let $(M_\gamma,\sigma,g,H)$ be a spin surface with fixed Riemannian metric $g$ and $H\in C^\infty(M_\gamma)$. The universal covering of $M_\gamma$ will be denoted by $\widetilde M_\gamma$. 
Let $S_{\gamma,\sigma,g,H}$ be the set of solutions of 
\begin{equation}\label{spin.weier.cond}
D\phi=H\phi\qquad |\phi|\equiv 1
\end{equation} 
on $M_\gamma$. Finally, let $\Imm_{\gamma,g,H}$ be the set of all periodic isometric immersions $F:\widetilde M_\gamma\to \R^3$ with mean curvature $H:M_\gamma\to \R$. 
Here, periodic refers to the existence of a `period homomorphism' $P:\pi_1(M_\gamma)\to \R^3$
with ${F([\alpha]\cdot x)} = F(x) +P([\alpha])$ for all $x\in \witi M_\gamma$ and 
$[\alpha]\in \pi_1(M_\gamma)$ which we view as a Deck transformation of 
$\widetilde M_\gamma \to M_\gamma$. Note that $\R^3$ acts on $\Imm_{\gamma,g,H}/\R^3$ by translations. The spinorial Weierstra{\ss} representation is the statement that there is a natural covering map 
$$
W:S_{\gamma,\sigma,g,H} \to \Imm_{\gamma,g,H}/\R^3
$$
with $W(\phi)=W(\psi)$ if and only if $\phi=\pm\psi$. It follows from Smale-Hirsch-theory (which is a special case of the h-principle for immersions of surfaces in 3-dimensional manifolds) that the image of $W$ 
is exactly one connected component of $\Imm_{\gamma,g,H}/\R^3$. This establishes a bijection from the set of isomorphism classes of spin structures on $M_\gamma$ to the set of connected components of 
$\Imm_{\gamma,g,H}/\R^3$. 

We briefly sketch the constructions of $W$ and of its `inverse', for details  
we refer to \cite{baer:98} and \cite{fr98} which express 
these constructions in modern language.

The `inverse' of the map $W$ can be described as follows: We fix a parallel spinor $\Psi$ on $\R^3$ of constant length $1$.
If $\witi M_\ga\to \R^3$ is a periodic isometric immersion 
then the `restriction' $\psi:=\Psi|_{\witi M_\ga}$ is a periodic unit spinor on  $\witi M_\ga$, 
well-defined
up to a sign. One can show that $\psi$ is the pullback of a unit spinor on $M_\ga$ under the universal covering map, provided that $M_\ga$
is equipped with the right choice of spin structure. 

To describe the map $W$ itself, one shows that a unit spinor $\phi$ on $M_\ga$
yields a linear isometric embedding $A_x:T_xM_\ga\to \R^3$ for every $x\in M_\ga$. The equation \eqref{spin.weier.cond} 
is then equivalent to the fact that these maps $A_x$ integrate, i.e. that there is a map $F:\witi M_\ga\to \R^3$ with $d_xF=A_x$.
 
The Weierstra\ss{} representation has a long history.  For minimal surfaces it can be traced back 
to Weierstra\ss{}' work on conformal parametrisations. For arbitrary surfaces the history is less clear, as surface representations in terms 
of different data were proven. The earliest reference known to us which shows that surfaces can be represented by solutions of $D\phi=H\phi$
is the preprint \cite{kusner.schmitt:p95} based on N.~Schmitt's thesis \cite{schmitt:diss}.
Necessity of  \eqref{spin.weier.cond} was certainly known before, 
see for instance~\cite{trautman:92,trautman:95}, and related results were already obtained in \cite{kenmotsu:79} and reportedly 
by Eisenhart and Abresch.
%
%
%
\section{Critical points}
%
\subsection{The Euler-Lagrange equation}
First we express the negative gradient of $\mc E$ in~\cref{neggrad} in terms of $A$ and $\beta$ as defined by~\cref{nabdec}. We write $|A|$ for the induced $g$-norm of $A$, i.e.\ $|A|^2=\tr\,A^tA$. Further, for a symmetric 2-tensor $h$ we denote by $h_0 = h - \tfrac 12\tr \,h \cdot g$ its traceless part. 

\begin{proposition}\label{critical}
The negative gradient of $\mc E$ is given by
\begin{align*}
Q_1(g,\phi) &= {-\tfrac{1}{4}}(\nabla_{J(\,\cdot\,)} \beta)^{sym}+\tfrac 12(A^tA + \beta \otimes \beta)_0\\
Q_2(g,\phi) &=- (\divv A) \cdot \phi -(\divv \beta)\, \omega\cdot \phi.
\end{align*}
\end{proposition}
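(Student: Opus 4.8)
The plan is to substitute the pointwise data $(A,\beta)$ attached to $(g,\phi)$ by \eqref{nabdec} into the formul{\ae} \eqref{neggrad} and simplify. I will work throughout at a fixed point $p\in M$ in a local positively oriented orthonormal frame $(e_1,e_2)$ with $\nabla_{e_k}e_j|_p=0$, using repeatedly that the volume form $\omega$ and the complex structure $J$ are $\nabla$-parallel and that Clifford multiplication is $\nabla$-parallel and skew-adjoint for the real inner product on $\Sigma_gM$. Note that $A\in\Gamma(\End TM)$ and $\beta\in\Omega^1(M)$ depend pointwise on $\phi$, so their covariant derivatives enter the computation.

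First I record the algebraic identities on a fibre. Orthonormality of $(\phi,e_1\cdot\phi,e_2\cdot\phi,\omega\cdot\phi)$ gives $\langle v\cdot\phi,\omega\cdot\phi\rangle=0$ and $\langle v\cdot\phi,w\cdot\phi\rangle=\langle v,w\rangle$ for $v,w\in T_pM$, and a direct computation in the two-dimensional Clifford algebra gives $\omega\cdot v\cdot=(Jv)\cdot$, $\,v\cdot\omega\cdot=-(Jv)\cdot$ and, for the $2$-form $X\wedge Y$, $(X\wedge Y)\cdot=\langle JX,Y\rangle\,\omega\cdot$. Feeding \eqref{nabdec} into $|\nabla\phi|^2$ and into $\langle\nabla\phi\otimes\nabla\phi\rangle$, and using that the $A$-part and the $\beta$-part of $\nabla\phi$ are orthogonal, yields $|\nabla\phi|^2=|A|^2+|\beta|^2$ and $\langle\nabla\phi\otimes\nabla\phi\rangle=A^tA+\beta\otimes\beta$ as symmetric $2$-tensors. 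Since $\tr(A^tA+\beta\otimes\beta)=|A|^2+|\beta|^2$, the first and third terms of $Q_1$ in \eqref{neggrad} combine to $\tfrac12(A^tA+\beta\otimes\beta)_0$, and it remains to identify $-\tfrac14\div_gT_{g,\phi}$.

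For the divergence term, the identities above give $\langle(X\wedge Y)\cdot\phi,\nabla_Z\phi\rangle=\langle JX,Y\rangle\,\beta(Z)$, so that, after symmetrising in the last two slots, $T_{g,\phi}(X,Y,Z)=\tfrac12\bigl(\langle JX,Y\rangle\beta(Z)+\langle JX,Z\rangle\beta(Y)\bigr)$. Taking the divergence at $p$ in the chosen frame, $\nabla J=0$ lets every derivative fall on $\beta$, and the identity $\sum_k\langle Je_k,\,\cdot\,\rangle e_k=-J(\,\cdot\,)$ (skew-symmetry of $J$) collapses the sum to $\div_gT_{g,\phi}=(\nabla_{J(\,\cdot\,)}\beta)^{sym}$; combined with the previous step this is the asserted formula for $Q_1$.

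For $Q_2$ I differentiate \eqref{nabdec} once more; at $p$, using $\nabla\omega=0$,
\[
\nabla_{e_k}\nabla_{e_k}\phi=(\nabla_{e_k}A)(e_k)\cdot\phi+(\nabla_{e_k}\beta)(e_k)\,\omega\cdot\phi+A(e_k)\cdot\nabla_{e_k}\phi+\beta(e_k)\,\omega\cdot\nabla_{e_k}\phi.
\]
Substituting \eqref{nabdec} into the last two terms and summing over $k$: the mixed terms cancel because $A(e_k)\cdot\omega\cdot+\omega\cdot A(e_k)\cdot=0$, while the standard relations $v\cdot v\cdot\phi=-|v|^2\phi$ and $\omega\cdot\omega\cdot\phi=-\phi$ contribute exactly $-|\nabla\phi|^2\phi$, and the first two terms assemble, via $\div_gA=-\sum_k(\nabla_{e_k}A)(e_k)$ and $\div_g\beta=-\sum_k(\nabla_{e_k}\beta)(e_k)$, into $-(\div_gA)\cdot\phi-(\div_g\beta)\,\omega\cdot\phi$. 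Hence $\nabla^{g\ast}\nabla^g\phi=(\div_gA)\cdot\phi+(\div_g\beta)\,\omega\cdot\phi+|\nabla\phi|^2\phi$, and the last summand cancels the $+|\nabla\phi|^2\phi$ in $Q_2=-\nabla^{g\ast}\nabla^g\phi+|\nabla\phi|^2\phi$, giving the claim. The computation is essentially bookkeeping with no conceptual obstacle; what needs attention is the consistency of all signs and symmetrisations with the paper's conventions — the minus sign in $\div_g$, the position of $J$ in $(\nabla_{J(\,\cdot\,)}\beta)^{sym}$, and the precise two-dimensional Clifford identities — together with keeping in mind that the $A$-part of $\nabla\phi$ is orthogonal to $\omega\cdot\phi$, which is what makes all the mixed terms drop out.
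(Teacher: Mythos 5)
Your proof is correct and follows essentially the same route as the paper's: substitute the decomposition \eqref{nabdec} into \eqref{neggrad}, compute $\langle\nabla\phi\otimes\nabla\phi\rangle=A^tA+\beta\otimes\beta$ and $T_{g,\phi}$ in a synchronous frame to identify $\div T_{g,\phi}=(\nabla_{J(\,\cdot\,)}\beta)^{sym}$, and differentiate \eqref{nabdec} once more for $\nabla^*\nabla\phi$, with the mixed terms cancelling via $v\cdot\omega+\omega\cdot v=0$. The only cosmetic difference is that you package the two-dimensional Clifford relations ($\omega\cdot v\cdot=(Jv)\cdot$, $(X\wedge Y)\cdot=\langle JX,Y\rangle\,\omega\cdot$) up front rather than verifying the relevant inner products inline, which makes the bookkeeping slightly cleaner but is substantively identical.
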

\begin{proof}
First, with $A(e_i)=\sum_kA_{ki}e_k$ for a $g$-orthonormal basis $(e_1,e_2)$,
\begin{align*}
\langle \nabla \phi \otimes \nabla \phi \rangle &= \sum_{i,j} \langle \nabla_{e_i} \phi, \nabla_{e_j} \phi \rangle e_i \otimes e_j\\
&= \sum_{i,j} \langle A(e_i)\cdot\phi + \beta(e_i)\omega\cdot\phi,A(e_j)\cdot\phi+\beta(e_j)\omega\cdot\phi\rangle e_i \otimes e_j\\
&= \sum_{i,j} \bigl(\langle A(e_i)\cdot\phi, A(e_j)\cdot\phi\rangle + \beta(e_i)\beta(e_j) \bigr) e_i \otimes e_j\\
&= \sum_{i,j} \bigl( \sum_k A_{ki}A_{kj} + \beta(e_i)\beta(e_j) \bigr) e_i \otimes e_j\\
&= A^tA + \beta \otimes \beta
\end{align*}
and
\ben
|\nabla \phi|^2 = \tr \langle \nabla \phi \otimes \nabla \phi \rangle = \tr (A^tA) + \tr (\beta\otimes \beta) = |A|^2+|\beta|^2. 
\ee
On the other hand, $\<X\wedge Y\cdot \phi, A(Z)\cdot \phi\>=0$ and $\<X\wedge Y\cdot\phi,\omega\cdot\phi\>=\omega(X,Y)$, using the convention $e_1 \wedge e_2 = e_1 \otimes e_2 - e_2 \otimes e_1$. This implies
\ben
T_{g,\phi}(X,Y,Z)=\tfrac{1}{2}\omega(X,Y)\beta(Z)+\tfrac{1}{2}\omega(X,Z)\beta(Y)
\ee
and therefore
\begin{align}
&\div T_{g,\phi} = -\tfrac{1}{2}\sum_{i,k,l} \bigl( \omega(e_i,e_k) (\nabla_{e_i} \beta) (e_l) + \omega(e_i,e_l) (\nabla_{e_i}\beta)(e_k) \bigr) e_k \otimes e_l\nonumber\\
=&(\nabla_{e_2}\beta)(e_1) e_1 \otimes e_1 - (\nabla_{e_1} \beta) (e_2)e_2 \otimes e_2+\bigl((\nabla_{e_2} \beta) (e_2) - (\nabla_{e_1} \beta) (e_1)\bigr) e_1 \odot e_2\nonumber\\
=& (\nabla_{J(\,\cdot\,)} \beta)^{sym}.\label{div.T.comp}
\end{align}
Next we work pointwise with a synchronous frame. Since vector fields anticommute with $\omega$,
\begin{align*}
\nabla^*\nabla \phi=& -\sum_{i=1}^2(\nabla_{e_i}\nabla_{e_i}\phi -\nabla_{\nabla_{e_i}e_i}\phi)\\
=&  -\sum_{i=1}^2 \Bigl(A(e_i)\cdot A(e_i)\cdot\phi +\beta(e_i)\bigl(A(e_i)\cdot \om+ \om \cdot A(e_i)\bigr)\cdot \phi+\beta(e_i)^2\om\cdot\om\cdot\phi\\
& \phantom{\sum}+ \nabla_{e_i}\bigl(A(e_i)\bigr)\cdot \phi  
+ \nabla_{e_i}\bigl(\beta(e_i)\bigr)\omega\cdot \phi\Bigr)\\
=& (|A|^2+|\beta|^2) \phi+(\divv A) \cdot \phi +(\divv \beta)\, \om \cdot \phi. 
\end{align*}
Since $Q_2(g,\phi)$ is orthogonal to $\phi$ we must have
\ben
Q_2(g,\phi)= - (\divv A) \cdot \phi -(\divv \beta)\, \om \cdot \phi,
\ee
whence the assertion.
\end{proof}

In terms of the pair $(A,\beta)$ we can now characterise a critical point as follows.

\begin{corollary}\label{beta-is-harmonic}
A pair $(g,\varphi)$ is a critical point of $\mc E$ if and only if
\beq\label{critpointchar}
\div\beta=0, \quad\div A=0 , \quad(\nabla_{J(\,\cdot\,)}\beta)^{sym}=2(A^tA+\beta\otimes\beta)_0.
\ee
In particular, if $(g,\varphi)$ is critical, then 
\begin{enumerate}[{\rm (i)}]
	\item\label{beta.closed} $\tr\,Q_1(g,\phi)=\star d\beta/4=0$, hence $\beta$ is a harmonic $1$-form.
	\item\label{nabla.sym} $\nabla_{J(\,\cdot\,)}\beta$ is traceless symmetric, i.e.\ $(\nabla_{J(\,\cdot\,)}\beta)_0=0$ and $(\nabla_{J(\,\cdot\,)}\beta)^{sym}=\nabla_{J(\,\cdot\,)}\beta$.
	\item\label{nabla.com.lin} $\nabla_{J(X)}\beta(Y)=\nabla_X\beta(J(Y))$.
	\item\label{div.beta.beta} $\div (\beta \otimes \beta)_0=0$
\end{enumerate}
\end{corollary}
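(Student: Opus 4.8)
The plan is to read every assertion off \cref{critical}. A pair $(g,\phi)$ is critical for $\mc E$ exactly when both components of the negative gradient vanish, i.e.\ $Q_1(g,\phi)=0$ and $Q_2(g,\phi)=0$. Since $(\phi,e_1\cdot\phi,e_2\cdot\phi,\omega\cdot\phi)$ is a pointwise orthonormal frame of $\Sigma_gM$, the three spinors $e_1\cdot\phi$, $e_2\cdot\phi$, $\omega\cdot\phi$ are linearly independent, so $Q_2(g,\phi)=-(\div A)\cdot\phi-(\div\beta)\,\omega\cdot\phi=0$ is equivalent to $\div A=0$ and $\div\beta=0$, while $Q_1(g,\phi)=-\tfrac14(\nabla_{J(\,\cdot\,)}\beta)^{sym}+\tfrac12(A^tA+\beta\otimes\beta)_0=0$ says precisely $(\nabla_{J(\,\cdot\,)}\beta)^{sym}=2(A^tA+\beta\otimes\beta)_0$. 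Together these three identities are exactly \cref{critpointchar}, which settles the characterisation.

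For the four consequences I would first compute the trace of $(\nabla_{J(\,\cdot\,)}\beta)^{sym}$. By \eqref{div.T.comp} this $2$-tensor equals $\div T_{g,\phi}$, and reading its diagonal off that formula gives $\tr(\nabla_{J(\,\cdot\,)}\beta)^{sym}=(\nabla_{e_2}\beta)(e_1)-(\nabla_{e_1}\beta)(e_2)=-d\beta(e_1,e_2)=-\star d\beta$, whereas $(A^tA+\beta\otimes\beta)_0$ is traceless by definition. Hence $\tr Q_1(g,\phi)=\tfrac14\star d\beta$; at a critical point it vanishes, so $d\beta=0$, and together with $\div\beta=0$ this says $\beta$ is harmonic. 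For the traceless-symmetry of $\nabla_{J(\,\cdot\,)}\beta$, the equation $(\nabla_{J(\,\cdot\,)}\beta)^{sym}=2(A^tA+\beta\otimes\beta)_0$ forces $(\nabla_{J(\,\cdot\,)}\beta)^{sym}$ to be traceless, while a short computation in an orthonormal frame shows that the skew part of the $2$-tensor $(X,Y)\mapsto\nabla_{J(X)}\beta(Y)$ equals $-\tfrac12(\div\beta)\,\omega$ in general, hence vanishes at a critical point; thus $\nabla_{J(\,\cdot\,)}\beta$ agrees with its symmetric part and is traceless symmetric.

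The identity $\nabla_{J(X)}\beta(Y)=\nabla_X\beta(J(Y))$ then follows from the elementary observation that on an oriented surface every traceless symmetric $2$-tensor $h$ obeys $h(JX,Y)=h(X,JY)$ (in a positively oriented orthonormal frame $h$ has the form $\left(\begin{smallmatrix}a&b\\ b&-a\end{smallmatrix}\right)$ and both sides are checked at once), applied to $h=\nabla\beta$, which is symmetric since $d\beta=0$ and traceless since $\div\beta=0$. For the last assertion I would write $(\beta\otimes\beta)_0=\beta\otimes\beta-\tfrac12|\beta|^2g$ and apply the Leibniz rule for the divergence: one gets $\div(\beta\otimes\beta)=(\div\beta)\,\beta-\nabla_{\beta^\sharp}\beta$ and $\div(\tfrac12|\beta|^2g)=-\tfrac12\,d(|\beta|^2)$ with $\tfrac12\,d(|\beta|^2)(X)=(\nabla_X\beta)(\beta^\sharp)$. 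At a critical point $\div\beta=0$ kills the first term, and the symmetry $(\nabla_X\beta)(\beta^\sharp)=(\nabla_{\beta^\sharp}\beta)(X)$, again a consequence of $d\beta=0$, cancels the remaining two, so $\div(\beta\otimes\beta)_0=0$.

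I do not expect a genuine obstacle once \cref{critical} is in hand; the step most prone to error is the sign bookkeeping in $\tr(\nabla_{J(\,\cdot\,)}\beta)^{sym}=-\star d\beta$ and the consistent use throughout of the convention $\div T=-\sum_k(\nabla_{e_k}T)(e_k,\,\cdot\,)$. One may also note a conceptual reason for the last assertion: once $\beta$ is harmonic, $\beta^{1,0}\otimes\beta^{1,0}$ is a holomorphic quadratic differential and $(\beta\otimes\beta)_0$ is a real multiple of its real part, hence transverse-traceless.
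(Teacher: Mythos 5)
Your proposal is correct and follows essentially the same route as the paper: read \cref{critpointchar} directly off \cref{critical}, compute $\tr(\nabla_{J(\,\cdot\,)}\beta)^{sym}=-\star d\beta$ for (i), identify the skew part with $\div\beta$ for (ii), and use the Leibniz rule plus $d\beta=0$ for (iv). The only cosmetic divergence is in (iii): the paper chains two symmetry facts, $\nabla_{J(X)}\beta(Y)=\nabla_{J(Y)}\beta(X)=\nabla_X\beta(J(Y))$, using symmetry of $\nabla_{J(\,\cdot\,)}\beta$ (from $\div\beta=0$) and then symmetry of $\nabla\beta$ (from $d\beta=0$), whereas you package the same two hypotheses as ``$\nabla\beta$ is traceless symmetric'' and invoke the general $2$-dimensional identity $h(JX,Y)=h(X,JY)$ for such $h$ --- both arguments are correct and equivalent in content, and your closing remark that $(\beta\otimes\beta)_0$ is the real part of a holomorphic quadratic differential is a nice conceptual addition not in the paper.
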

\begin{proof}
\cref{critpointchar} follows directly from~\cref{critical}. For \eqref{beta.closed}, we note that
\beq\label{eq.tr.div.t}
\tr\div T_{g,\phi} = (\nabla_{e_2} \beta)(e_1) - (\nabla_{e_1} \beta)(e_2)=-\star d\beta,
\ee
whence $4\tr\,Q_1=\star d\beta$ from \cref{neggrad}. For \eqref{nabla.sym} and \eqref{nabla.com.lin} we note that in an orthonormal frame the anti-symmetric part of $\nabla_{J(\,\cdot\,)}\beta$ is given by
\ben
(\nabla_{J(e_2)} \beta) (e_1)- (\nabla_{J(e_1)} \beta) (e_2) = -(\nabla_{e_1} \beta) (e_1)- (\nabla_{e_2} \beta) (e_2)= \div \beta.
\ee
Hence $\nabla_{J(\,\cdot\,)}\beta$ is symmetric if and only if $\div \beta = 0$. Since $\nabla \beta$ is symmetric if and only if $d \beta=0$,
\ben
\nabla_{J(X)}\beta (Y) = \nabla_{J(Y)} \beta(X) = \nabla_X \beta(J(Y))
\ee
if $(g,\varphi)$ is critical. To prove \eqref{div.beta.beta} we observe $\tr\,\beta\otimes\beta=|\beta|^2$ so that $(\beta \otimes \beta)_0 = \beta \otimes \beta - \tfrac 12 |\beta|^2 g$. Now in a synchronous frame
\begin{align*}
\div \beta \otimes \beta &= - (\nabla_{e_1} \beta) (e_1) \beta - \beta(e_1) \nabla_{e_1} \beta - (\nabla_{e_2} \beta )(e_2) \beta - \beta(e_2) \nabla_{e_2} \beta\\ 
&= (\div \beta) \beta - \nabla_{\beta^\sharp}\beta,
\end{align*}
whence $\div \beta \otimes \beta = - \nabla_{\beta^\sharp}\beta$ if $\div \beta = 0$. Moreover,
\begin{align*}
\div |\beta|^2 g &= - d |\beta|^2
= -2 g(\nabla \beta, \beta)\\ 
&=-2 \sum_{i,j} (\nabla_{e_i} \beta) (e_j)\beta(e_j)e_i\\
&=-2 \sum_{i,j} ((\nabla_{e_j} \beta) (e_i)+d\beta(e_i,e_j))\beta(e_j)e_i\\
&= - 2 \nabla_{\beta^\sharp}\beta + 2 \iota_{\beta^\sharp} d\beta.
\end{align*}
Consequently, $\div |\beta|^2g =-2\nabla_{\beta^\sharp} \beta$ if $d\beta =0$, whence the assertion.
\end{proof}

\begin{remark}
\phantom{a}\hfill
\begin{enumerate}[(i)]
	\item The proof of properties \eqref{nabla.sym} to \eqref{div.beta.beta} solely uses the harmonicity of $\beta$.
	\item The identity~\eqref{circle.action} induces a circle action which preserves the functional $\mc E$. Together with the quaternionic action on $\Delta$ we see that there is a $\U(2) = S^1\times_{\Z_2} \SU(2)$-action which preserves the functional and therefore acts on the critical points (cf.\ also~\cite[Section 4.1.3, Table 2]{amwewi12}).
\end{enumerate}
\end{remark}

The condition that $Q_1(g,\phi)$ is trace-free or equivalently, that the associated $1$-form $\beta$ is closed, can be interpreted as follows. As pointed out in~\cref{prelim.spinor.surface}, there is a natural bundle isometry $\mc C:\Sigma_g\to\Sigma_{\tilde g}$ between conformally equivalent metrics $\tilde g=e^{2u}g$, $u\in C^\infty(M)$. Hence, for $(g,\phi)\in\mc N$ we can consider the associated {\em spinor conformal class} $[g,\phi]:=\{(\tilde g,\tilde\phi)\,|\,\tilde g=e^{2u}g,\,\tilde\phi=\mc C\phi\}$.

\begin{proposition}\label{conf.class.min}
The following statements are equivalent:
\begin{enumerate}[{\rm(i)}]
	\item\label{enum.min.cla}$(g,\phi)\in\mc N$ is an absolute minimiser in its spinor conformal class.
	\item\label{enum.bet}$d\beta=0$.
	\item\label{enum.tra}$\tr\,Q_1(g,\phi)=0$.
\end{enumerate}
Furthermore, in any spinor conformal class there exists an absolute minimiser which is unique up to homothety. In particular, any spinor conformal class contains a unique absolute minimiser of total volume one.
\end{proposition}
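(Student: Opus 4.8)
The plan is to prove the chain of equivalences and then handle the existence-and-uniqueness clause by a conformal reduction to a single linear elliptic equation.

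\smallskip

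\emph{The equivalences.} The implication \eqref{enum.bet}$\Leftrightarrow$\eqref{enum.tra} is immediate from \cref{beta-is-harmonic}\eqref{beta.closed}, since $4\tr Q_1(g,\phi)=\star\,d\beta$, and $\star$ is an isomorphism on $2$-forms. For \eqref{enum.min.cla}$\Rightarrow$\eqref{enum.tra} one argues by the first variation: if $(g,\phi)$ minimises $\mc E$ within $[g,\phi]$, then $\mc E$ is stationary along the one-parameter family $(e^{2tu}g,\mc C\phi)$ for every $u\in C^\infty(M)$. Differentiating and using that the variation of the metric in this family is the \emph{pure trace} direction $2u\,g$, the only part of the negative gradient $Q_1$ that pairs nontrivially against $u\,g$ is its trace; hence $\int_M u\,\tr Q_1\,dv^g=0$ for all $u$, forcing $\tr Q_1\equiv 0$. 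The remaining implication \eqref{enum.tra}$\Rightarrow$\eqref{enum.min.cla} is the substantive one: I would compute $\mc E$ explicitly along the conformal family. Using the conformal equivariance \cref{confdirac} of $D$ together with the Schrödinger–Lichnerowicz identity \cref{Dirac_energy} — which separates $\mc E$ into the conformally covariant Dirac term $\tfrac12\int|D\varphi|^2$ and the topological Gauß–Bonnet term — one finds that $\mc E(e^{2u}g,\mc C\phi)$ depends on $u$ through a functional of the form $\tfrac12\int_M |D_g(e^{u/2}\phi)|^2 e^{-u}\,dv^g$ plus a constant. Expanding $D_g(e^{u/2}\phi)=e^{u/2}D_g\phi+\tfrac12 e^{u/2}\,\mathrm{grad}\,u\cdot\phi$ and using that $\mathrm{grad}\,u\cdot\phi\perp\phi$ pointwise, this splits into a sum of two manifestly nonnegative terms whose $u$-dependence is governed precisely by $\int_M|du|^2$-type quantities and a cross term proportional to $d\beta$ (the cross term $\langle D_g\phi,\mathrm{grad}\,u\cdot\phi\rangle$ reproduces, after integration by parts, the contraction of $du$ with $\beta$ via \cref{diracpair} and \cref{nabdec}). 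When $d\beta=0$, the cross term integrates to zero and $u=0$ is seen to be the minimiser. This computation is the main obstacle: it requires care in tracking the conformal weights of $|\cdot|$, $dv$, and $D$ simultaneously, and in identifying the cross term with $d\beta$.

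\smallskip

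\emph{Existence and uniqueness up to homothety.} Given any $(g,\phi)\in\mc N$, I seek $u\in C^\infty(M)$ so that $(e^{2u}g,\mc C\phi)$ satisfies $d\tilde\beta=0$, equivalently $\tr Q_1(e^{2u}g,\mc C\phi)=0$. Writing out how $\beta$ transforms under $g\mapsto e^{2u}g$ using \cref{nabdec} and \cref{confdirac}, the closedness condition $d\tilde\beta=0$ becomes a \emph{linear} second-order elliptic equation in $u$ of Laplace type, namely $\Delta_g u = F_{g,\phi}$ for an explicit right-hand side built from $\beta$ (essentially $F=-\star d\beta$ up to constants and lower-order rearrangement). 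Since $M$ is closed, such an equation is solvable if and only if $\int_M F_{g,\phi}\,dv^g=0$, and that integral vanishes by Stokes' theorem ($\int_M \star d\beta = \int_M d\beta = 0$ on a closed surface); the solution $u$ is then unique up to an additive constant by the maximum principle / the fact that the kernel of $\Delta_g$ on a closed manifold consists of constants. An additive constant in $u$ is exactly a homothety $g\mapsto c^2 g$, so the minimiser in $[g,\phi]$ is unique up to homothety. Finally, among all homothetic rescalings there is a unique representative of total volume one (choose $c$ with $c^2\,\mathrm{vol}(g)=1$, noting by \cref{rescaling} that in dimension $n=2$ the energy is even homothety-invariant, so every rescaled representative is equally a minimiser). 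I would also remark that elliptic regularity upgrades the weak solution $u$ to a smooth one, keeping the construction inside $\mc N$.
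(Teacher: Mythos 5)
Your proposal is correct and follows essentially the same strategy as the paper's proof: equivalence of \eqref{enum.bet} and \eqref{enum.tra} from $4\tr Q_1=\star d\beta$, the key computation via conformal equivariance \cref{confdirac} expanding $|D(e^{u/2}\phi)|^2$, identifying the cross term $\langle D\phi,\grad u\cdot\phi\rangle$ with $(\star\beta,du)$ via \cref{diracpair}, and integration by parts to obtain $(\star d\beta,u)$. The only differences are in packaging: for \eqref{enum.min.cla}$\Rightarrow$\eqref{enum.bet} you invoke the first-variation argument abstractly, whereas the paper substitutes the specific test function $u=-\star d\beta$ into the expanded energy inequality to force $d\beta=0$; and for existence of a closed representative the paper derives the explicit transformation law $\tilde\beta=\beta-\tfrac12\star du$, writes the Hodge decomposition $\beta=H(\beta)\oplus d[\beta]\oplus\delta\{\beta\}$, and reads off $u=-2\star\{\beta\}$ directly, while you reduce to a linear elliptic equation $\Delta_g u=F$ solved by Fredholm theory on a closed manifold. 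Both routes are sound and equivalent (the Hodge-theoretic substitution is simply the explicit solution of your PDE), and your observation that the additive constant in $u$ corresponds to a homothety, together with scale-invariance \cref{rescaling} in $n=2$, correctly delivers the uniqueness and volume-normalisation claims.
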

\begin{proof}
The equivalence between~\eqref{enum.bet} and~\eqref{enum.tra} is just \cref{critical}. For~\eqref{enum.bet} $\Rightarrow$~\eqref{enum.min.cla} assume that $\beta$ associated with $(g,\phi)$ satisfies $d\beta=0$. For any $(\tilde g,\tilde\phi)\in[g,\phi]$ we find
\ben
|\tilde D\tilde\phi|^2=e^{-3u}|De^{u/2}\phi|^2=e^{-2u}|D\phi+\tfrac{1}{2}\grad u\cdot\phi|^2
\ee
by~\cref{confdirac}. For all $u \in C^\infty(M)$ this and \cref{diracpair} gives
\begin{align}
\int_M |\tilde D\phi|^2 d \tilde v &= \int_M|D\phi|^2 + \tfrac 14 |du|^2 + \langle D \phi, \grad u \cdot \phi \rangle dv\nonumber\\
&=  \int_M|D\phi|^2 + \tfrac 14 |du|^2-\langle(\beta\circ J)^\sharp\cdot \phi, \grad u \cdot \phi \rangle dv\nonumber\\
&= \int_M|D\phi|^2 + \tfrac 14 |du|^2 + (\star\beta, du)dv\nonumber\\
&= \int_M|D\phi|^2 + \tfrac 14 |du|^2 + (\star d\beta, u)dv\nonumber\\
&= \int_M|D\phi|^2 + \tfrac 14 |du|^2dv\nonumber\\
& \geq\int_M |D\phi|^2dv.\label{conf.cla.est}
\end{align}
Further, this yields that $\int_M|du|^2/4+(\star d\beta,u)dv\geq0$ for an absolute minimiser. Taking $u=-\star d\beta$ shows that $\beta$ associated with an absolute minimiser must be closed, hence~\eqref{enum.min.cla} $\Rightarrow$~\eqref{enum.bet}. Finally, equality holds in~\eqref{conf.cla.est} if and only if $u$ is constant. To prove existence of an absolute minimiser we first note that for the $1$-form $\tilde\beta$ associated with $(\tilde g,\tilde\phi)\in[g,\phi]$ we have $\tilde\beta(\tilde X)=e^{-u}\tilde\beta(X)=\langle\tilde\nabla_{\tilde X}\tilde\phi,\tilde\omega\cdot\tilde\phi\rangle$. On the other hand,
\ben
\langle\tilde\nabla_{\tilde X}\tilde\phi,\tilde\omega\cdot\tilde\phi\rangle=e^{-u}\beta(X)+\tfrac{1}{2}\langle X\cdot\grad\,e^{-u}\cdot\phi,\omega\cdot\phi\rangle
\ee
by~\cite[(1.15)]{bfgk91}. The latter term equals $J(X)(e^{-u})/2=de^{-u}(J(X))/2$ which implies
\ben
\tilde\beta=\beta-\tfrac{1}{2}\star du.
\ee
If $\beta=H(\beta)\oplus d[\beta]\oplus\delta\{\beta\}$ is the Hodge decomposition of $\beta$ for a function $[\beta]$ and a $2$-form $\{\beta\}$, then $d\tilde\beta=d(\delta\{\beta\}-\tfrac{1}{2}\star du)$. Taking $u=-2\star\{\beta\}$ yields that $d\tilde\beta=0$. 
\end{proof}
%
\subsection{Curvature}
Next we investigate the relationship between $A$, $\beta$ and the Gau\ss{} curvature $K$ of $g$. The basic link between curvature, spinors and 1-forms are the formul{\ae} of Weitzenb\"ock type
\beq\label{weitzenboeck}
D^2\phi=\nabla^*\nabla\phi+\frac{1}{2}K\cdot\phi\quad\mbox{ and }\quad\Delta\beta=\nabla^*\nabla\beta+K\cdot\beta.
\ee 
In particular, if $(g,\phi)$ is a critical and $g$ is flat, $\beta$ is necessarily parallel. We shall need a technical lemma first.

\begin{lemma}\label{Tr_div_T}
Let $\Phi=(g,\phi) \in \mc N$. Then $\langle D^2 \phi, \phi \rangle = |D\phi|^2-\star d\beta$.
\end{lemma}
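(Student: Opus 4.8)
The plan is to compute $\langle D^2\phi,\phi\rangle$ directly from the Schrödinger-Lichnerowicz formula \cref{weitzenboeck}, which gives $\langle D^2\phi,\phi\rangle = \langle\nabla^*\nabla\phi,\phi\rangle + \tfrac12 K|\phi|^2 = \langle\nabla^*\nabla\phi,\phi\rangle + \tfrac12 K$ since $|\phi|\equiv1$. So the task reduces to showing $\langle\nabla^*\nabla\phi,\phi\rangle = |D\phi|^2 - \star d\beta - \tfrac12 K$. For this I would use the expression for $\nabla^*\nabla\phi$ obtained in the proof of \cref{critical}, namely $\nabla^*\nabla\phi = (|A|^2+|\beta|^2)\phi + (\divv A)\cdot\phi + (\divv\beta)\,\omega\cdot\phi$. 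Pairing with $\phi$ and using that $(\divv A)\cdot\phi$ and $(\divv\beta)\,\omega\cdot\phi$ are pointwise orthogonal to $\phi$ (Clifford multiplication by a vector, resp.\ by $\omega$ which anticommutes appropriately, sends $\phi$ to something orthogonal to $\phi$ as in the local frame $(\phi,e_1\cdot\phi,e_2\cdot\phi,\omega\cdot\phi)$), we get $\langle\nabla^*\nabla\phi,\phi\rangle = |A|^2+|\beta|^2 = |\nabla\phi|^2$.

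Next I would compute $|D\phi|^2$ from \cref{diracpair}. Writing $D\phi = \tr A\,\phi + \tr(A\circ J)\,\omega\cdot\phi - (\beta\circ J)^\sharp\cdot\phi$ and using the orthogonality of $\phi$, $\omega\cdot\phi$, and $v\cdot\phi$ for any vector $v$ (these are part of an orthogonal frame, with $|v\cdot\phi| = |v|$), one finds $|D\phi|^2 = (\tr A)^2 + (\tr(A\circ J))^2 + |\beta\circ J|^2 = (\tr A)^2 + (\tr(A\circ J))^2 + |\beta|^2$. Comparing with $|\nabla\phi|^2 = |A|^2 + |\beta|^2$, the difference is $|D\phi|^2 - |\nabla\phi|^2 = (\tr A)^2 + (\tr(A\circ J))^2 - |A|^2$. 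A short linear-algebra computation with a $2\times2$ matrix $A = (A_{ij})$ shows $(\tr A)^2 + (\tr(A\circ J))^2 - |A|^2 = 2(A_{11}A_{22} - A_{12}A_{21}) = 2\det A$ — but more to the point, I want to identify this combination with the Gauss curvature, via the structure equation. Indeed, differentiating \cref{nabdec} and using that $\nabla\phi = A(\cdot)\cdot\phi + \beta(\cdot)\,\omega\cdot\phi$, the curvature of the spinor connection is $R^\Sigma(e_1,e_2)\phi = \tfrac12 K\,\omega\cdot\phi$; expanding $R^\Sigma(e_1,e_2)\phi = (\nabla_{e_1}\nabla_{e_2} - \nabla_{e_2}\nabla_{e_1} - \nabla_{[e_1,e_2]})\phi$ in terms of $A$ and $\beta$ and reading off the $\phi$- and $\omega\cdot\phi$-components yields exactly the two identities $d\beta(e_1,e_2) + (\text{terms in }A) = \tfrac12 K$ and a curl-type relation; specifically one obtains $\tfrac12 K = \det A - \star d\beta$ (up to the sign conventions fixed in the excerpt).

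Assembling: $\langle D^2\phi,\phi\rangle = |\nabla\phi|^2 + \tfrac12 K = |D\phi|^2 - 2\det A + \tfrac12 K = |D\phi|^2 - 2\det A + (\det A - \star d\beta) = |D\phi|^2 - \det A - \star d\beta$. Hmm — this would leave a $\det A$ term, so either my structure-equation normalization is off by a factor, or (more likely) the correct integrability identity reads $K = 2\det A - 2\star d\beta$ i.e.\ $\tfrac12 K = \det A - \star d\beta$ combined with the fact that $|D\phi|^2 - |\nabla\phi|^2 = 2\det A$ gives $\langle D^2\phi,\phi\rangle = |\nabla\phi|^2 + \tfrac12 K = (|D\phi|^2 - 2\det A) + (\det A - \star d\beta)$; to get the clean stated answer $|D\phi|^2 - \star d\beta$ I in fact need $\tfrac12 K = 2\det A - \star d\beta$ paired with $|D\phi|^2 - |\nabla\phi|^2 = 2\det A$. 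So the crux — and the main obstacle — is getting the integrability/structure identity relating $K$, $\det A$, and $d\beta$ with the exactly correct constants in the paper's sign conventions; this is precisely the content alluded to in \cref{integrability} and must be derived by expanding the spinor curvature operator $R^\Sigma(e_1,e_2)\phi = \tfrac12 K\,\omega\cdot\phi$ in terms of the pair $(A,\beta)$ via \cref{nabdec}. Once that identity is pinned down, the lemma follows by the two-line assembly above. An alternative, possibly cleaner route that sidesteps the constant-chasing: integrate everything and note $\int_M \langle D^2\phi,\phi\rangle\,dv = \int_M |D\phi|^2\,dv$ (self-adjointness) while $\int_M \star d\beta\,dv = 0$ by Stokes, which verifies the integrated identity for free; but since the lemma is a pointwise statement, I would still need the pointwise structure equation, so I expect to carry out the curvature expansion carefully as the real work.
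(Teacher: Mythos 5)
Your approach is correct in outline but runs opposite to the paper's logical flow, and this creates a circularity hazard you should flag explicitly. The paper proves \cref{Tr_div_T} first, by a direct synchronous-frame computation showing $\tr\div T_{g,\phi}=\langle D^2\phi,\phi\rangle-|D\phi|^2$, and then equating with the separately established $\tr\div T_{g,\phi}=-\star d\beta$ from \cref{eq.tr.div.t}; only afterwards does it derive the curvature identity $K=4\det A-2\star d\beta$ (Proposition \ref{curvature}\eqref{K.A.beta}) \emph{from} this lemma plus Schr\"odinger--Lichnerowicz. You propose the reverse: Schr\"odinger--Lichnerowicz plus the identity $|D\phi|^2-|\nabla\phi|^2=2\det A$ (your linear algebra here is correct) plus the curvature identity, assembled into the lemma. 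That works only if you prove $\tfrac12 K=2\det A-\star d\beta$ \emph{without} invoking Proposition \ref{curvature} as stated in the paper; otherwise it is circular. You correctly sense this and point to the spinor-curvature expansion of \cref{integrability}: setting $\widetilde\nabla=\nabla-\Gamma$ with $\Gamma(X)=A(X)+\beta(X)\omega$, the spinor $\phi$ is $\widetilde\nabla$-parallel, so $R^{\widetilde\nabla}(e_1,e_2)\phi=0$, and reading off the $\omega\cdot\phi$-component of the expansion carried out in the paper gives exactly $-\tfrac12 K-\star d\beta+2\det A=0$. That independent derivation rescues your route from circularity, and then the final assembly $\langle D^2\phi,\phi\rangle=|\nabla\phi|^2+\tfrac12K=(|D\phi|^2-2\det A)+(2\det A-\star d\beta)=|D\phi|^2-\star d\beta$ closes it. Your hesitation about the normalisation constant is resolved precisely by that $\omega\cdot\phi$-component; your second tentative guess $\tfrac12 K=2\det A-\star d\beta$ is the correct one in the paper's conventions. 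Also, as you observe, the integrated-identity shortcut via self-adjointness and Stokes only verifies the statement after integration and cannot replace the pointwise argument. In short: a valid, genuinely different proof, but it is longer than the paper's two-line synchronous-frame computation and must be written so as to avoid citing Proposition \ref{curvature}\eqref{K.A.beta}, which downstream depends on the very lemma you are proving.
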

\begin{proof}
A pointwise computation with a synchronous frame implies
\begin{align*}
\tr\div T_{g,\phi}  =&  -\sum_{j,k=1}^n(\nabla_{e_j}T_\phi)(e_j,e_k,e_k)\\
=&  -\sum_{k,j=1}^ne_j\<e_j\cdot e_k\cdot\phi,\nabla_{e_k}\phi\>-\sum_{k=1}^ne_k.\<\varphi,\nabla_{e_k}\varphi\>\\
=&  -\sum_{k,j=1}^n\<e_j\cdot e_k\cdot\nabla_{e_j}\phi,\nabla_{e_k}\phi\>-\sum_{k=1}^n\<e_j\cdot e_k\cdot\varphi,\nabla_{e_j}\nabla_{e_k}\varphi\>\\
&-|\nabla\varphi|^2 +\<\varphi,\nabla^\ast\nabla\varphi\>\\
=&  \<D^2\phi,\phi\>-|D\phi|^2.
\end{align*}
On the other hand, as already observed in \cref{eq.tr.div.t}, $\tr \div T_{g,\phi} =- \star d\beta$, whence the result in view of \cref{critical}.
\end{proof}

In terms of the associated pair $(A,\beta)$, the equations in~\eqref{weitzenboeck} read as follows.

\begin{proposition}\label{curvature} 
Let $(g,\phi)\in\mc N$. Then
\begin{enumerate}[{\rm (i)}]
	\item\label{K.A.beta} $K= 4\det A - 2\star d\beta$
	\item\label{K.div.beta} $K\star\beta=\div \nabla_{J(\,\cdot\,)}\beta$.
\end{enumerate}
\end{proposition}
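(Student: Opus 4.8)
The plan is to prove (i) from the first Weitzenb\"ock formula in \cref{weitzenboeck} and (ii) from the Ricci identity for $1$-forms; the two identities are essentially independent. For (i) I would pair $D^2\phi=\nabla^*\nabla\phi+\tfrac12 K\phi$ with the unit spinor $\phi$. Using the expression for $\nabla^*\nabla\phi$ computed in the proof of \cref{critical} and the skew-adjointness of Clifford multiplication by vectors and by $\omega$ with respect to the real inner product $\mr{Re}\,h$, the summands $(\div A)\cdot\phi$ and $(\div\beta)\,\omega\cdot\phi$ are orthogonal to $\phi$, so the left-hand side equals $|A|^2+|\beta|^2+\tfrac12 K$. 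For the right-hand side I invoke \cref{Tr_div_T}, which gives $\langle D^2\phi,\phi\rangle=|D\phi|^2-\star d\beta$, together with \cref{diracpair}; since $\phi$, $\omega\cdot\phi$ and $X\cdot\phi$ (for $X\in TM$) are mutually orthogonal, the latter yields $|D\phi|^2=(\tr A)^2+(\tr(A\circ J))^2+|\beta|^2$. Equating the two expressions cancels $|\beta|^2$ and leaves
\[
\tfrac12 K=(\tr A)^2+(\tr(A\circ J))^2-|A|^2-\star d\beta .
\]
It then remains to verify the elementary identity $(\tr A)^2+(\tr(A\circ J))^2-|A|^2=2\det A$, which one checks by writing $A$ as a $2\times2$ matrix in a positively oriented orthonormal frame; this gives $K=4\det A-2\star d\beta$.

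For (ii) I note that the statement involves neither $A$ nor $\phi$, so it is really a pointwise identity for an arbitrary $1$-form on the oriented Riemannian surface $(M,g)$ --- essentially the tensorial form of the second Weitzenb\"ock formula in \cref{weitzenboeck}. I would derive it directly from the Ricci identity $(\nabla^2\beta)(X,Y,Z)-(\nabla^2\beta)(Y,X,Z)=-\beta(R(X,Y)Z)$. Since $J$, and hence $\omega$ and $\star$, are parallel, contracting the divergence in a positively oriented orthonormal frame $(e_1,e_2)$ gives
\[
\div\nabla_{J(\,\cdot\,)}\beta(Z)=-\sum_{k=1}^2(\nabla^2\beta)(e_k,Je_k,Z)=-\bigl((\nabla^2\beta)(e_1,e_2,Z)-(\nabla^2\beta)(e_2,e_1,Z)\bigr)=\beta(R(e_1,e_2)Z).
\]
By the curvature conventions fixed in the introduction one has $R(e_1,e_2)e_1=-Ke_2$ and $R(e_1,e_2)e_2=Ke_1$, i.e.\ $R(e_1,e_2)Z=-K\,JZ$; together with $J^*\beta=-\star\beta$ this yields $\div\nabla_{J(\,\cdot\,)}\beta(Z)=-K\,\beta(JZ)=K\,(\star\beta)(Z)$, which is (ii).

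The main difficulty here is bookkeeping rather than anything conceptual: in (i) one must keep track of exactly which summands of the spinorial expressions are orthogonal with respect to the \emph{real} inner product, and in both parts the sign conventions for $J$, $\omega$, $\star$ and the curvature tensor (all fixed in the introduction) must be used consistently, since a single sign slip would alter a numerical factor. Beyond that I expect no genuine obstacle.
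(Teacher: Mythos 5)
Your proof is correct and follows essentially the same route as the paper for both parts. For (ii) the argument is identical: compute $\div\nabla_{J(\,\cdot\,)}\beta$ in a synchronous frame, use $\nabla J=0$, and apply the Ricci identity to reduce to $-R(e_1,e_2)\beta$, then identify this with $K\star\beta$ via the sign conventions. For (i) the paper also starts from the Schr\"odinger--Lichnerowicz formula combined with \cref{Tr_div_T} to obtain $\tfrac{K}{2}=|D\phi|^2-|\nabla\phi|^2-\star d\beta$, but then evaluates $|D\phi|^2-|\nabla\phi|^2$ directly at the Clifford level as $4\langle e_1\cdot\nabla_{e_1}\phi,\,e_2\cdot\nabla_{e_2}\phi\rangle$ and expands in terms of the matrix entries $A_{ij}$ to extract $4\det A$. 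You instead substitute the explicit $(A,\beta)$-expressions for $\nabla^*\nabla\phi$ and $D\phi$ from \cref{critical} and \cref{diracpair}, exploit the mutual orthogonality of $\phi$, $\omega\cdot\phi$, $X\cdot\phi$, and finish with the matrix identity $(\tr A)^2+(\tr(A\circ J))^2-|A|^2=2\det A$. The two computations are equivalent in content and cost; yours pushes the algebra to the $2\times2$ matrix level while the paper keeps it at the level of Clifford products, so there is no substantive difference in approach.
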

\begin{proof}
\eqref{K.A.beta} Since we always have $\langle \nabla^*\nabla \phi, \phi \rangle = |\nabla \phi|^2$ for a unit spinor we get  
\ben
\tfrac{K}{2} = |D \phi|^2 - |\nabla \phi|^2-\star d\beta
\ee
from \cref{Tr_div_T} and the Schr\"odinger-Lichnerowicz formula. Locally,
\begin{align*}
|D \phi|^2 &= | \sum_i e_i\cdot\nabla_{e_i}\phi|^2
= \sum_{i,j} \langle e_i\cdot\nabla_{e_i}\phi, e_j\cdot\nabla_{e_j}\phi \rangle\\
&= |\nabla\phi|^2 + \sum_{i\neq j} \langle e_i\cdot\nabla_{e_i}\phi, e_j\cdot\nabla_{e_j}\phi \rangle
\end{align*}
and therefore
\begin{align*}
K + 2 \star d\beta&= 4 \langle e_1\cdot\nabla_{e_1} \phi, e_2\cdot\nabla_{e_2} \phi \rangle\\
&= 4 \langle e_1\cdot A(e_1)\cdot\phi + e_1\cdot \beta(e_1)\omega\cdot\phi,e_2\cdot A(e_2)\cdot\phi + e_2\cdot \beta(e_2)\omega\cdot\phi\rangle\\
&= 4\langle e_1\cdot A(e_1)\cdot\phi - \beta(e_1)e_2\cdot\phi,e_2\cdot A(e_2)\cdot\phi + \beta(e_2)e_1\cdot\phi\rangle\\
&= 4 \langle e_1\cdot A(e_1)\cdot\phi, e_2\cdot A(e_2)\cdot\phi\rangle\\ 
&= 4 \langle -A_{11}\phi +A_{21}e_1\cdot e_2\cdot\phi, -A_{12}e_1\cdot e_2\cdot\phi-A_{22}\phi\rangle\\
&= 4(A_{11}A_{22}-A_{21}A_{12}) = 4 \det A,
\end{align*}
where $(A_{ij})$ is the matrix of $A$ with respect to the basis $\{e_1,e_2\}$.

\smallskip

\eqref{K.div.beta} Computing in a synchronous frame yields
\begin{align*}
\div \nabla_{J(\,\cdot\,)} \beta  & = - \nabla_{e_1}\nabla_{J(e_1)} \beta - \nabla_{e_2}\nabla_{J(e_2)} \beta \\ 
&= - \nabla_{e_1}\nabla_{e_2} \beta + \nabla_{e_2}\nabla_{e_1} \beta= -R(e_1,e_2) \beta.
\end{align*}
Since $R(e_1,e_2)\beta=-K\star \beta$, \eqref{K.div.beta} follows.
\end{proof}

\begin{corollary}\label{cor_gauss}
If $(g,\phi)\in\mc N$ is a critical point of $\mc E$, then 
\begin{enumerate}[{\rm (i)}]
	\item\label{K.detA} $K= 4\det A$.
	\item\label{K.div.AA} $K\star\beta=2\div(A^tA)_0$.
	\item\label{nabla.phi.K} $2|\nabla\phi|^2\geq|K|$.
\end{enumerate}
\end{corollary}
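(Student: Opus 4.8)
The plan is to read off all three assertions from the curvature identities of \cref{curvature} combined with the critical-point characterisation of \cref{beta-is-harmonic}; essentially no new computation is required.

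First, for \eqref{K.detA} I would combine \cref{curvature}\eqref{K.A.beta}, namely $K = 4\det A - 2\star d\beta$, with the observation that at a critical point the form $\beta$ is harmonic, in particular closed, so that $\star d\beta = 0$ by \cref{beta-is-harmonic}\eqref{beta.closed}. This immediately gives $K = 4\det A$.

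Next, for \eqref{K.div.AA} I would start from \cref{curvature}\eqref{K.div.beta}, which reads $K\star\beta = \div\nabla_{J(\,\cdot\,)}\beta$. At a critical point $\nabla_{J(\,\cdot\,)}\beta$ is already symmetric by \cref{beta-is-harmonic}\eqref{nabla.sym}, so the last equation of \cref{critpointchar} becomes $\nabla_{J(\,\cdot\,)}\beta = 2(A^tA + \beta\otimes\beta)_0 = 2(A^tA)_0 + 2(\beta\otimes\beta)_0$, using linearity of the traceless-part operation. Taking divergences and invoking $\div(\beta\otimes\beta)_0 = 0$ from \cref{beta-is-harmonic}\eqref{div.beta.beta} yields $K\star\beta = 2\div(A^tA)_0$.

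Finally, for \eqref{nabla.phi.K} I would recall from the proof of \cref{critical} that $|\nabla\phi|^2 = |A|^2 + |\beta|^2$, and use \eqref{K.detA} to rewrite $|K| = 4|\det A|$. It therefore suffices to prove the pointwise inequality $|A|^2 \ge 2|\det A|$ for a $2\times2$ matrix $A = (A_{ij})$, which follows from the two identities $|A|^2 \mp 2\det A = (A_{11}\mp A_{22})^2 + (A_{12}\pm A_{21})^2 \ge 0$ (equivalently $|A|^2 = \sigma_1^2+\sigma_2^2 \ge 2\sigma_1\sigma_2 = 2|\det A|$ in terms of the singular values $\sigma_1,\sigma_2$ of $A$). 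Then $2|\nabla\phi|^2 \ge 2|A|^2 \ge 4|\det A| = |K|$, as claimed.

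I do not expect any real obstacle: the corollary is a formal consequence of the structural results already established, the only non-tautological ingredient being the elementary matrix inequality $|A|^2 \ge 2|\det A|$, disposed of by a single completion of squares.
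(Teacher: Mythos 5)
Your treatment of \eqref{K.detA} and \eqref{K.div.AA} is exactly the paper's (the paper only says these are ``immediate consequences'' of \cref{beta-is-harmonic} and \cref{curvature}; you supply the same details one would). For \eqref{nabla.phi.K}, however, you take a genuinely different and somewhat more elementary route. The paper deduces the identity $|D\phi|^2 = |\nabla\phi|^2 + K/2$ at a critical point by combining \cref{Tr_div_T} with the Schr\"odinger--Lichnerowicz formula and the vanishing of $\star d\beta$, then sandwiches $K$ using $0 \leq |D\phi|^2 \leq 2|\nabla\phi|^2$, the upper bound being Cauchy--Schwarz applied to $D\phi = \sum e_i\cdot\nabla_{e_i}\phi$. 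You instead bypass the Dirac operator entirely: you invoke \eqref{K.detA} to write $|K| = 4|\det A|$, use the pointwise formula $|\nabla\phi|^2 = |A|^2 + |\beta|^2$ from the proof of \cref{critical}, and finish with the elementary $2\times 2$ matrix inequality $|A|^2 \ge 2|\det A|$ via completion of squares. Both arguments are correct; yours is shorter and purely algebraic in $(A,\beta)$, while the paper's has the side benefit of exhibiting the relation $|D\phi|^2 = |\nabla\phi|^2 + K/2$, which it re-uses later (for instance in the proofs of \cref{tt.tensor} and \cref{min_sphere}).
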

\begin{proof}
The first two statements are immediate consequences of~\cref{beta-is-harmonic} and the previous proposition. Further, by \cref{Tr_div_T}, the second line of \cref{neggrad}, and the assumption $0\leq |D\phi|^2= |\na \phi|^2+K/2$,
while
\beq\label{dirnab}
|D\phi|^2 =|\sum_{i=1}^2 e_i\cdot\nabla_{e_i} \phi|^2\leq\big(\sum_{i=1}^2 1 \cdot |\nabla_{e_i}\phi|\big)^2\leq 2 |\nabla\phi|^2,
\ee
whence \eqref{nabla.phi.K}.
\end{proof}
%
\subsection{Integrability of $(A,\beta)$}\label{integrability}
Next we address the question for which pairs $(A,\beta)$ a solution to~\cref{nabdec} exists. Towards that end we introduce the Clifford algebra valued $1$-form $\Gamma(X):=A(X)+\beta(X)\omega$ and define the connection
\ben
\widetilde\nabla_X\phi:=\nabla_X \phi - A(X)\cdot \phi - \beta(X) \omega\cdot \phi=\nabla_X\phi-\Gamma(X)\cdot\phi.
\ee
A solution to~\cref{nabdec} exists if and only if if we have a non-trivial $\widetilde\nabla$-parallel spinor field. In fact this is equivalent to the triviality of the spinor bundle in the sense of flat bundles for we may regard $\Sigma M$ as a ``quaternionic'' line bundle. This in turn is equivalent to the vanishing of the curvature $R^{\widetilde\nabla}$ and the triviality of the associated holonomy map $\pi_1(M,p)\to \Aut(\Sigma_pM)$. We have
\begin{align*}
&R^{\widetilde \nabla}(X,Y)\phi =(\widetilde\nabla_X\widetilde\nabla_Y-\widetilde\nabla_Y\widetilde\nabla_X -\widetilde\nabla_{[X,Y]})\phi\\
=&\widetilde\nabla_X(\nabla_Y \phi - \Gamma(Y)\cdot \phi)-\widetilde\nabla_Y(\nabla_X \phi- \Gamma(X)\cdot \phi ) -\nabla_{[X,Y]} \phi + \Gamma([X,Y])\cdot \phi\\ 
=&R^\nabla(X,Y)\phi - \nabla_X( \Gamma(Y)\cdot \phi ) + \nabla_Y( \Gamma(X)\cdot \phi) 
-\Gamma(X) (\nabla_Y \phi - \Gamma(Y)\cdot \phi)\\
&+\Gamma(Y)(\nabla_X \phi - \Gamma(X)\cdot \phi ) +\Gamma(\nabla_XY)\cdot \phi - \Gamma(\nabla_YX)\cdot\phi\\ 
=& R^\nabla(X,Y)\phi - (\nabla_X \Gamma)(Y)\cdot \phi  + (\nabla_Y \Gamma)(X)\cdot \phi +\Gamma(X)\Gamma(Y)\cdot \phi- \Gamma(Y)\Gamma(X)\cdot \phi \\  
=&R^\nabla(X,Y)\phi - d\Gamma(X,Y)\cdot \phi + [\Gamma(X),\Gamma(Y)]\phi,
\end{align*}
where $d\Gamma$ denotes the skew-symmetric part of the covariant derivative $\nabla\Gamma$, i.e.
\beq
d\Gamma(X,Y):=(\nabla_X\Gamma)(Y) - (\nabla_Y\Gamma)(X) =(\nabla_XA)(Y) - (\nabla_YA)(X)+d\beta(X,Y)\omega.
\ee 
Similarly, we define $dA(X,Y):=(\nabla_XA)(Y) - (\nabla_YA)(X)$. Now for an oriented orthonormal basis $(e_1,e_2)$ we find
\begin{align*}
[\Gamma(e_1),\Gamma(e_2)]=&[A(e_1),A(e_2)] + 2 \beta(e_2)A(e_1)\omega -2 \beta(e_1)A(e_2)\omega\\
=&2(\det A)\omega-2\beta(e_2)J(A(e_1))+2\beta(e_1)J(A(e_2)). 
\end{align*}
Since $2R^{\nabla}(e_1,e_2)\phi=K\omega\cdot\phi$ we finally get
\begin{align*}
R^{\widetilde\nabla}(e_1,e_2)\phi 
=&-\frac12 K \omega\cdot\phi -dA(e_1,e_2)\phi -d\beta(e_1,e_2)\omega\cdot\phi\\
&+2(\det A)\omega\cdot\phi - 2 \beta(e_2)J(A(e_1))\phi + 2 \beta(e_1)J(A(e_2))\phi. 
\end{align*}
Since $K=4\det A-2\star d\beta$ by~\cref{curvature}, this vanishes for all $\phi$ if and only if $dA(e_1,e_2) = - 2 \beta(e_2)J(A(e_1)) + 2 \beta(e_1)J(A(e_2))$. Since $M$ is K\"ahler, $\nabla J=0$, hence $\nabla_X(A\circ J)(Y)=(\nabla_XA)(JY)$. Writing the previous expression invariantly yields the following

\begin{proposition}\label{int.cond.Abeta}
If the pair $(A,\beta)$ arises from a spinor field as in~\eqref{nabdec}, then
\ben
\div(A\circ J)=-2(J \circ A \circ J)(\beta^\sharp).
\ee
\end{proposition}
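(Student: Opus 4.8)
The plan is to read the curvature computation carried out just above the statement as an \emph{identity} rather than just a motivation. By hypothesis there is a unit spinor $\phi$ with $\nabla_X\phi=A(X)\cdot\phi+\beta(X)\omega\cdot\phi$, i.e.\ $\widetilde\nabla\phi=0$; hence $R^{\widetilde\nabla}(e_1,e_2)\phi=0$ pointwise for any positively oriented orthonormal frame $(e_1,e_2)$. First I would insert the explicit expression for $R^{\widetilde\nabla}(e_1,e_2)\phi$ obtained above and use $K=4\det A-2\star d\beta$ from \cref{curvature}\,\eqref{K.A.beta}: the three summands that are multiples of $\omega\cdot\phi$ (coming from $-\tfrac12 K\omega\cdot\phi$, $-d\beta(e_1,e_2)\omega\cdot\phi$ and $2(\det A)\omega\cdot\phi$) then cancel identically, leaving an identity of the form $v\cdot\phi=0$ with
\[
v=dA(e_1,e_2)+2\beta(e_2)J(A(e_1))-2\beta(e_1)J(A(e_2))\in\Gamma(TM).
\]

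The second step is to eliminate the spinor. Since $\phi$ is a \emph{unit} spinor it is nowhere zero, so applying Clifford multiplication by $v$ once more to $v\cdot\phi=0$ gives $-|v|^2\phi=0$, whence $v\equiv0$, i.e.
\[
dA(e_1,e_2)=-2\beta(e_2)J(A(e_1))+2\beta(e_1)J(A(e_2)).
\]
This is the only place where a small amount of care is needed, and it is what I expect to flag as the main (minor) obstacle: the hypothesis supplies a single $\widetilde\nabla$-parallel section of $\Sigma M$, not flatness of $\widetilde\nabla$, so one cannot simply cancel $R^{\widetilde\nabla}$ as an operator identity; instead one uses that Clifford multiplication by a nonzero tangent vector is invertible on any nonzero spinor. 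Everything else is bookkeeping.

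Finally I would rewrite both sides invariantly. Since the metric makes $M_\gamma$ Kähler we have $\nabla J=0$, so $\nabla_X(A\circ J)=(\nabla_XA)\circ J$ and hence, from the sign convention for $\div$,
\[
\div(A\circ J)=-\sum_{k}(\nabla_{e_k}A)(Je_k)=-\bigl((\nabla_{e_1}A)(e_2)-(\nabla_{e_2}A)(e_1)\bigr)=-dA(e_1,e_2).
\]
On the other hand, writing $\beta^\sharp=\beta(e_1)e_1+\beta(e_2)e_2$ one computes directly $(J\circ A\circ J)(\beta^\sharp)=\beta(e_1)J(A(e_2))-\beta(e_2)J(A(e_1))$, so that $-2(J\circ A\circ J)(\beta^\sharp)=2\beta(e_2)J(A(e_1))-2\beta(e_1)J(A(e_2))=-dA(e_1,e_2)$ by the previous display. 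Combining the two computations gives $\div(A\circ J)=-2(J\circ A\circ J)(\beta^\sharp)$, and since both sides are visibly independent of the chosen oriented orthonormal frame this is the asserted identity.
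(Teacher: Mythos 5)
Your proof is correct and follows essentially the same computational route as the paper: compute $R^{\widetilde\nabla}(e_1,e_2)\phi$, cancel the $\omega\cdot\phi$-terms using $K=4\det A-2\star d\beta$, and then recast $dA(e_1,e_2)=-2\beta(e_2)J(A(e_1))+2\beta(e_1)J(A(e_2))$ invariantly using $\nabla J=0$. The only genuine difference is the step you flag yourself. The paper does not quietly ``cancel the operator'': just above the computation it notes that $\Sigma M$ is a quaternionic line bundle and that $\widetilde\nabla$ commutes with the right $\Hh$-action, so a single nonzero $\widetilde\nabla$-parallel section already forces $R^{\widetilde\nabla}\equiv 0$ as an operator; the phrase ``vanishes for all $\phi$'' is then legitimate. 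Your alternative — apply $R^{\widetilde\nabla}$ only to the given unit spinor, obtain $v\cdot\phi=0$ with $v\in TM$, and conclude $v=0$ from $v\cdot v\cdot\phi=-|v|^2\phi$ — is equally valid and a bit more economical since it avoids invoking the quaternionic structure. Both arguments buy the same conclusion; yours is more elementary, the paper's is more structural (and is what makes the surrounding remark about flatness and holonomy precise). Everything else in your write-up (the cancellation of the $\omega\cdot\phi$-terms via $d\beta(e_1,e_2)=\star d\beta$, the identification $\div(A\circ J)=-dA(e_1,e_2)$, and the frame computation of $(J\circ A\circ J)(\beta^\sharp)$) matches the paper.
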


Conversely, if the integrability condition of \cref{int.cond.Abeta} is satisfied, then there exists a {\em local} solution $\varphi$ to \cref{nabdec}. Moreover, $\varphi$ is uniquely determined up to multiplication by a unit quaternion from the right.
%
\subsection{Absolute minimisers}
In dimension $n\geq3$ the only critical points of the spinorial energy functional $\mc E$ are absolute minimisers with $\mc E(g,\phi)=0$ \cite{amwewi12}. This stands in sharp contrast to the surface case.

\begin{theorem}\label{inf.E}
On a spin surface $(M_\gamma,\sigma)$ we have
\ben
\inf\mc E=\pi|\gamma-1|.
\ee
\end{theorem}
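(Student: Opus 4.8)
The plan is the following. The lower bound $\inf\mc E\ge\pi|\ga-1|$ is already in hand: for $\ga\ge1$ it is immediate from \eqref{Dirac_energy} and Gauss--Bonnet, since $\mc E(g,\phi)\ge-\tfrac14\int_MK_g\,dv^g=\pi(\ga-1)$; for $\ga=0$ one combines \eqref{Dirac_energy} with the pointwise twistor identity $|\na^g\phi|^2=|P_g\phi|^2+\tfrac12|D_g\phi|^2$, which yields $\int_M|D_g\phi|^2\,dv^g=2\int_M|P_g\phi|^2\,dv^g+\int_MK_g\,dv^g\ge4\pi$ and hence $\mc E(g,\phi)\ge\pi$. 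So what remains is the matching upper bound: for each $(M_\ga,\si)$ we must produce $(g_j,\phi_j)\in\mc N$ with $\mc E(g_j,\phi_j)\to\pi|\ga-1|$.

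In genus $0$ I would take the round sphere of radius $r$ with a Killing spinor $\phi$ -- a twistor spinor of constant length satisfying $D_g\phi=\pm\tfrac1r\phi$ -- so that $\mc E(g,\phi)=\tfrac12\cdot\tfrac1{r^2}\cdot4\pi r^2-\tfrac14\cdot4\pi=\pi$; here the infimum is actually attained. In genus $1$ with the non-bounding (Lie group) spin structure I would take a flat torus with its parallel unit spinor, so $\mc E=0$. For the three bounding spin structures on $T^2$, which the mapping class group $\SL(2,\Z)$ permutes transitively and under which $\mc E$ is invariant, I would reduce to the one that is anti-periodic around one generating circle and periodic around the other, and use the anisotropic flat tori $\R^2/(\Z e_1\oplus L\Z e_2)$: this spin structure has first Dirac eigenvalue $\la_1=\pi$ for every $L$, with a constant-length eigenspinor, so $\mc E=\tfrac12\la_1^2\vol=\tfrac{\pi^2}2L\to0$ as $L\to0$.

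For genus $\ge2$ the key construction smears out the zeros of a harmonic spinor. Suppose some metric $g$ on $M_\ga$ carries a non-zero positive harmonic spinor $\psi\in\ker D_g^+$; this holds for every non-bounding spin structure, and for $\ga\ge3$ also for the bounding ones after passing to a conformal class realised by a curve with a vanishing thetanull (using that $\diff_+(M_\ga)$ acts transitively on even spin structures and that $\dim\ker D$ is a conformal invariant). Then $\psi$ has finitely many zeros $p_1,\dots,p_k$; pick $\chi\in\Gamma(\Sigma_{g-})$ smooth with $\chi(p_i)\ne0$ and, for $\de>0$, set $\psi_\de:=\psi+\de\chi$, $\phi_\de:=\psi_\de/|\psi_\de|$ (transported to $\Sigma_{g_\de}M$), and $g_\de:=|\psi_\de|^4g$. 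Because $\psi\in\Sigma_{g+}$ and $\chi\in\Sigma_{g-}$ are pointwise orthogonal we have $|\psi_\de|^2=|\psi|^2+\de^2|\chi|^2>0$, so $\phi_\de$ is smooth and $g_\de$ a genuine metric. Feeding $u=2\log|\psi_\de|$ into the conformal covariance \eqref{confdirac}, the $\grad u$-term exactly cancels the $\grad(1/|\psi_\de|)$-contribution to $D_g\phi_\de$ (here $D_g\psi=0$ enters), and one is left with
\[
\int_M|D_{g_\de}\phi_\de|^2\,dv^{g_\de}=\de^2\int_M\frac{|D_g\chi|^2}{|\psi|^2+\de^2|\chi|^2}\,dv^g=O\bigl(\de^2\log\tfrac1\de\bigr),
\]
the logarithm stemming from the (at worst simple) poles of the integrand at the $p_i$; this tends to $0$ as $\de\to0$. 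Hence $\mc E(g_\de,\phi_\de)=\tfrac12\int_M|D_{g_\de}\phi_\de|^2\,dv^{g_\de}+\pi(\ga-1)\to\pi(\ga-1)$.

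The one case not covered by this -- and, I expect, the main obstacle -- is a bounding spin structure in genus $2$: there $h^0(\la)=0$ for every theta characteristic, so $\ker D_g=0$ for \emph{every} metric $g$, and the flat tori do not help either. I would handle it by a gluing/degeneration argument: up to diffeomorphism $(M_2,\si)$ is $T^2\#T^2$ with each factor carrying its Lie spin structure (the Arf invariants add to $0$, so the glued structure is bounding), and one glues the two flat-torus-with-parallel-spinor configurations along a small, negatively curved ``waist'' absorbing the $-4\pi$ of curvature forced by Gauss--Bonnet; the waist can be arranged so that its contribution to $\int|D\phi|^2$ goes to $0$ while its contribution to $\mc E$ goes to $\pi$, giving $\mc E\to0+0+\pi=\pi$. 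Such a neck estimate in fact yields $\inf\mc E(M_1\#M_2)\le\inf\mc E(M_1)+\inf\mc E(M_2)+\pi$, which together with the base values $\inf\mc E(S^2)=\pi$ and $\inf\mc E(T^2,\,\cdot\,)=0$ would re-prove the entire upper bound by induction on the genus; in either route the delicate point is the energy bookkeeping on the neck.
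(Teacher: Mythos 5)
Your lower bound argument is sound, and for $\gamma=0$ is actually cleaner than the paper's phrasing: the pointwise twistor identity $|\nabla\phi|^2=|P\phi|^2+\tfrac12|D\phi|^2$ yields $\int|\nabla\phi|^2\geq\tfrac12\int K=2\pi$ directly without the detour through the critical-point inequality of \cref{cor_gauss}. The genus $0$ and genus $1$ upper bounds (Killing spinor on $S^2$; parallel spinor, resp.\ degenerating anisotropic flat tori for the bounding structures) are also correct, and for genus $\geq2$ your conformal desingularisation $\psi_\delta=\psi+\delta\chi$, $g_\delta=|\psi_\delta|^4g$ is a legitimate alternative route to almost-minimisers wherever a nonzero harmonic spinor exists; note only that the zeros of $\psi$ need not be simple (a positive harmonic spinor is a holomorphic section of $\lambda$ of degree $\gamma-1$ and can vanish to higher order, as in the hyperelliptic \cref{hol.sec}), so the rate $O(\delta^2\log\tfrac1\delta)$ is wrong in general, but dominated convergence still gives $\int|D_{g_\delta}\phi_\delta|^2\,dv^{g_\delta}\to0$, which is all you need.

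The genuine gap is exactly where you suspect it: the bounding spin structures in genus $2$ (and the argument also leaves open the bounding structures for even $\gamma\geq4$ and $\gamma\equiv3\bmod4$ unless one invokes the non-trivial existence of vanishing thetanulls, which you assert without reference). For $\gamma=2$ bounding, $\ker D_g=0$ for every conformal class, so the desingularisation route is closed off, and your proposed intrinsic gluing ``the waist can be arranged so that its contribution to $\int|D\phi|^2$ goes to $0$'' is precisely the delicate estimate that is not supplied. The paper sidesteps all of this with a single uniform construction: it embeds $M_\gamma$ into a flat $3$-torus with arbitrarily small Willmore energy (\cref{lemma.gluing}, \cref{lemma.inf-construction}), using catenoidal necks — which are minimal, hence contribute zero mean curvature — to join flat $2$-tori, and then restricts a parallel spinor of $T^3$. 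Via the spinorial Weierstra{\ss} representation $D\phi=H\phi$, small Willmore energy translates directly into small $\int|D\phi|^2$, and the spin structure is controlled by which tori one starts from and how many necks are glued (\cref{inf-construction}). This extrinsic viewpoint is exactly what makes the ``energy bookkeeping on the neck'' tractable — the neck is \emph{minimal} in $T^3$ so costs nothing, and only the transition caps contribute, each $O(1/\sqrt{1+L^2})$ — and it handles every genus and every spin structure at once. If you want to keep your intrinsic gluing approach, you would essentially have to reprove this estimate from scratch; the conformal covariance of $D$ alone will not do it, since there is no canonical spinor to put on an abstractly glued negatively curved waist.
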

\begin{proof}
The lower bound $\inf\mc E\geq\pi|\gamma-1|$ follows directly from the Schr\"odinger-Lichnerowicz and Gau{\ss}-Bonnet formul{\ae}, for 
\beq\label{low.est.E}
\tfrac{1}{2}\int_{M_\gamma} |\nabla \phi|^2\geq-\tfrac{1}{4}\int_{M_\gamma}K=\pi(\gamma-1)
\ee
which gives the estimate for $\gamma\geq1$. For the sphere, we use (iii) of \cref{cor_gauss} to obtain
\beq\label{dirnab2}
2\pi=\frac{1}{2}\int_{S^2}K\leq\int_{S^2} |\nabla\phi|^2.
\ee
Further, the results of \cref{sphere} show that this lower bound is actually attained on the sphere. For genus $\gamma\geq1$ we show the existence of ``almost-minimisers'', i.e.\ for every $\varepsilon>0$ there is a unit spinor $(g,\phi)$ such that $\mc E(g,\phi) \leq \pi|\ga-1|+\varepsilon$. There is a standard strategy for their construction by gluing together 2-tori with small Willmore energy in a flat $3$-torus $(T^3,g_0)$ and restricting the parallel spinors of $T^3$ to the resulting surface, see also~\cite{gigr10} and~\cite{tr08} (which we discuss further in \cref{min_surf}) for related constructions. 

To start with we define the Willmore energy of a piecewise smoothly embedded surface $F:M\to T^3$ by
$$
\mc W(F):=\frac12 \int_{F(M)}H^2dv^g.
$$
Here, $H$ is the mean curvature of $F(M)$ in $(T^3,g_0)$ and integration is performed with respect to the 
volume element $dv^g$ associated to the restriction of the Euclidean metric to $F(M)$. For sake of concreteness, consider a square fundamental domain of the torus in $\R^3$, fix $\rho>0$ and consider two flat disks of radius $\rho$ inside that domain which are parallel to the $(x_1,x_3)$-plane and are at small distance from each other. We want two replace the disjoint union of the disks of radius $\rho/2$ by a catenoidal neck and retain the vertical annular pieces. The result of this process will be called a {\em handle of radius $\rho$}. 

\begin{lemma}\label{lemma.gluing}
For all $\varepsilon >0$ there exists a handle of radius $\rho$ which has Willmore energy less than $\varepsilon$.
\end{lemma}
\begin{lemproof}
Since the Willmore energy is scaling invariant it suffices to construct a model handle with Willmore energy less than $\varepsilon$ for some radius $\rho(\varepsilon) > 0$. The solution for the given radius $\rho$ is then simply obtained by rescaling. We construct a model handle as a surface of revolution. It will be composed of a catenoidal part, a spherical part and a flat annular part. More precisely, let $L>0$ and consider the curve $\gamma=(\gamma_1,\gamma_2) : [0,\infty) \to \R \times (0,\infty)$ defined by
$$
\gamma(u) = \begin{cases}
 (\arsinh(u), \sqrt{1+u^2}) &,\,0 \leq u \leq L \\
 (a,b) + R\big(\cos(\frac{u-L}{R}-\alpha), \sin (\frac{u-L}{R}-\alpha)\big) &,\,L \leq u \leq L + \alpha R \\
 \big(a+R,b+u - (L+\alpha R)\big) &,\, L+\alpha R \leq u < \infty
 \end{cases}
$$
where we have set $(a,b) = (\arsinh(L)-L\sqrt{1+L^2}, 2 \sqrt{1+L^2})$, $R=1+L^2$ and $\alpha = \arcsin(1/\sqrt{1+L^2})$. Consider the surface of revolution around the $x_1$-axis defined by
$$
F(u,v)=\big(\gamma_1(u),\cos(v)\gamma_2(u), \sin(v)\gamma_2(u)\big)
$$
where $u\in[0,\infty)$, $v\in[0,2\pi)$. This surface is a piecewise smooth $C^1$-surface with Willmore energy
$$
\mathcal W(F)=\frac{\pi}{\sqrt{1+L^2}}
$$
which is precisely the Willmore energy of the spherical piece, the catenoid and the flat piece being minimal. We double this surface along the boundary $\{x_1=0\}$ and intersect with the region $\{x_2^2+x_3^2\leq 4b^2\}$ to get a handle of radius $\rho(L)=2b$ with Willmore energy $2\pi/\sqrt{1+L^2}<\varepsilon$ for $L$ big enough. This piecewise smooth handle may be approximated by smooth handles with respect to the $W^{2,2}$-topology to yield the desired smooth handle.
\end{lemproof}

\begin{remark}
Fix $\rho >0$ and consider the handle of radius $\rho$ with Willmore energy $\varepsilon = 4 \pi/\sqrt{1+L^2}$ which we obtain by rescaling the handle constructed above by $2b$. Then the distance between the flat annular pieces is given by
$$
2\frac{a+R}{2b}=\frac{1}{2}\left(\frac{\arsinh(L)}{\sqrt{1+L^2}}+\sqrt{1+L^2}-L\right)
$$
which goes to zero as $\varepsilon\to 0$ (i.e.\ $L\to\infty$).
\end{remark}

\begin{lemma}\label{lemma.inf-construction}
For a compact connected surface $M_\gamma$ of genus $\gamma\geq 1$ with a fixed spin structure $\sigma$, there is a flat torus $(T^3,g_0)$ and an embedding $F:M_\gamma\to T^3$ such that $\mc W(F)\leq \varepsilon$ and such that the spin structure on  $M_\gamma$ induced by this embedding is the given spin structure $\sigma$.
\end{lemma}

\begin{figure}
\begin{tikzpicture}[scale=.65]
\draw[thick] (0,0) --(0,8) --(8,8) --(8,0) --(0,0); 
\draw [blue, thick] (4.7,6) arc(0:180:1);
\draw [blue, thick] (4.7,2) arc(0:-180:1);
\draw [blue, thick] (2.7,6) -- (2.7, 2);
\draw [blue, thick] (4.7,6) -- (4.7, 2);
\draw [green, thick] (2.3,0) -- (2.3,8);
\draw [red, thick] (2.7,4.8) arc (0:180:.2);
\draw [red, thick] (2.7,5.5) arc (0:-180:.2);
\draw [red, thick] (2.7,2.5) arc (0:180:.2);
\draw [red, thick] (2.7,3.2) arc (0:-180:.2);
\draw[thick] (10,0) --(10,8) --(18,8) --(18,0) --(10,0); 
\draw [green, thick] (12.7,0) -- (12.7,8);
\draw [green, thick] (12.3,0) -- (12.3,8);
\draw [red, thick] (12.7,6) arc (0:180:.2);
\draw [red, thick] (12.7,6.7) arc (0:-180:.2);
\draw [red, thick] (12.7,4) arc (0:180:.2);
\draw [red, thick] (12.7,4.7) arc (0:-180:.2);
\draw [red, thick] (12.7,2) arc (0:180:.2);
\draw [red, thick] (12.7,2.7) arc (0:-180:.2);
\end{tikzpicture}
\caption{Surfaces with almost minimisers. The left-hand picture shows a torus with a non-bounding spin structure, drawn in green, and a torus with a bounding spin structure, drawn in blue. These surfaces are connected by necks drawn in red. The right-hand picture shows two tori with a non-bounding spin structure, drawn in green, connected by necks drawn in red.}\label{inf-construction}
\end{figure}
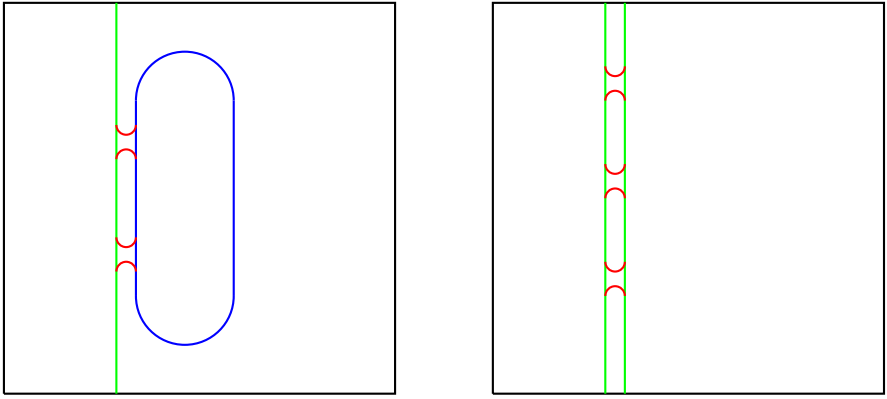

\begin{lemproof}
Since orientation preserving diffeomorphisms act transitively on both bounding and non-bounding spin structures, it is enough to show the lemma for only one bounding or non-bounding spin structure.

We deal with the case $\gamma=1$ first. For the non-bounding spin structure we may simply take $T_n$ to be any totally geodesic 2-torus in a flat torus $(T^3,g_0)$. This embedding has zero Willmore energy and the induced spin structure on $T_n$ is the non-bounding one. For a bounding spin structure we choose an embedding $D^2\subset T^2$ and let $S^1=\partial D^2$. Let $S^1_\delta$ denote the circle of length $\delta>0$ and set $T_b:=S^1 \times S^1_\delta \subset T^2 \times S^1_\delta$. Then $T_b$ has arbitrarily small Willmore energy for $\delta$ small enough, and the induced spin structure on $T_b$ is bounding. Note that we may slightly flatten the circle $S^1\subset T^2$ in order to make it contain a line segment. Then $T_b$ contains a flat disk which will be useful later for gluing in a handle.

In the higher genus case we use the tori $T_b$ and $T_n$ constructed above as building blocks which we connect by handles with small Willmore energy. The construction is illustrated in \cref{inf-construction}. If $\sigma$ is a non-bounding spin structure, we align a copy of $T_n$ and a copy of $T_b$ in such a way that $T_n$ is parallel and at small distance to a flat disk inside $T_b$. Then we connect $T_n$ and $T_b$ by $\gamma-1$ handles. If $\sigma$ is a bounding spin structure, we take two parallel copies of $T_n$ at small distance, and call them $T_n'$ and $T_n''$. Then we connect $T_n'$ and $T_n''$ by $\gamma-1$ handles. According to \cref{lemma.gluing} this can be done without introducing more than an arbitrarily small amount of Willmore energy. The resulting surface has genus $\gamma$ and carries a non-bounding spin-structure in the first, and a bounding spin structure in the second case.
\end{lemproof}

We return to the proof of \cref{inf.E}. With the notations of the lemma and the proposition we set $g:=F^*(g_0)$. Further, we restrict a parallel spinor of unit length on $T^3$ to $F(M_\gamma)$ and pull it back to a spinor $\phi$ on $(M_\gamma,\sigma,g)$. As in \cite{fr98} it follows that $D\phi = H\phi$, whence
$$
\frac12 \int_{M_\gamma}|D\phi|^2dv^g=\frac12 \int_{M_\gamma}H^2dv^g=\mc W(F)\leq\epsilon
$$
and thus 
$$
\mc E(g,\phi)=\frac12 \int_{M_\gamma}|D\phi|^2- \frac14 \int_{M_\gamma}K\leq\ep-\frac\pi2\chi(M)=\varepsilon +\pi|\gamma-1|
$$
as claimed.
\end{proof}

From \eqref{low.est.E}, the Schr\"odinger-Lichnerowicz formula and the results from \cref{spinor.weier} we immediately deduce the

\begin{corollary}
If $\gamma\geq1$, then $\mc E(g,\phi)=\pi|\gamma-1|$ if and only if $D_g\varphi=0$, that is, $\phi$ is a harmonic spinor of unit length. In particular, absolute minimisers of $\mc E$ over $M_\gamma$ correspond to  minimal isometric immersions of the universal covering of $M_\gamma$.
\end{corollary}

\begin{remark}
In the case of the sphere ($\gamma=0$) equality holds if and only if $\varphi$ is a so-called {\em twistor spinor}, see \cref{sphere}. Furthermore, as a consequence of the Schr\"odinger-Lichnerowicz and Gau\ss{}-Bonnet formula, a unit spinor on the torus ($\gamma=1$) is harmonic if and only if it is parallel.
\end{remark}

\begin{example}\label{min_surf}
For any $\gamma\geq 3$ there exists a triply periodic orientable minimal surface $M$ in $\R^3$ such that if $\Gamma$ denotes the lattice generated by its three periods, the projection of $M$ to the flat torus $T^3=\R^3/\Gamma$ is $M_\gamma$ \cite[Theorem 1]{tr08}. Since the normal bundle of $M_\gamma$ in $T^3$ is trivial there exists a natural induced spin structure which we claim to be a bounding one. To see this we need to analyse the construction in~\cite{tr08} which is a refinement of the construction used in \cref{lemma.inf-construction}. In a first step one starts with two flat minimal $2$-dimensional tori $T_1$ and $T_2$ inside the flat $3$-dimensional torus $T^3$. One can assume that $T_1$ and $T_2$ are parallel. The trivial spin structure on $T^3$ admits parallel spinors which we can restrict to parallel spinors on $T_1$ and $T_2$. In particular, both $T_1$ and $T_2$ carry the non-bounding spin structure so that the disjoint union $T_1\amalg T_2$ carries a bounding spin structure. Namely, $T_1\amalg T_2$ is the boundary of any connected component of 
$T^3\setminus (T_1\amalg T_2)$, and this even holds in the sense of spin manifolds, cf.\ also the discussion in \cite[Remark II.2.17]{lami89}. In a second step, small catenoidal necks are glued in between $T_1$ and $T_2$ but this does not affect the nature of the spin structure which thus remains a bounding one.
\end{example}

Using the conformal equivariance \eqref{confdirac} of the Dirac operator gives a further corollary. Namely, $\mc E(g,\phi)= \pi(\ga -1)$ for $(g,\phi)\in\mc N$ if and only if there is metric $\tilde g$ with nowhere vanishing spinor $\tilde\phi$ with $D_{\tilde g}\tilde\phi=0$. Indeed, for $g=|\tilde\phi|^4_{\tilde g}\tilde g$ the rescaled spinor $\phi=\tilde\phi/|\tilde\phi|_g$ is in the kernel of $D_g$ and of unit norm. 

\begin{corollary}
For $\gamma\geq1$ absolute minimisers on a spin surface correspond to nowhere vanishing harmonic spinors on Riemann surfaces.
\end{corollary}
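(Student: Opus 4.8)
The plan is simply to organise into a proof the remarks made in the paragraph preceding the statement. Two ingredients are needed: (a) the characterisation of absolute minimisers recorded in \cref{inf.E} and the corollary that follows it, namely that for $\gamma\ge1$ a pair $(g,\phi)\in\mc N$ satisfies $\mc E(g,\phi)=\inf\mc E=\pi(\gamma-1)$ if and only if $D_g\phi=0$ and $|\phi|_g\equiv1$; and (b) the conformal covariance \eqref{confdirac} of the Dirac operator. What must be shown is that $(M_\gamma,\sigma)$ admits a unit harmonic spinor $(g,\phi)$ if and only if some metric in the conformal class carries a nowhere vanishing, but not necessarily unit, harmonic spinor compatible with $\sigma$.

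One direction is immediate from (a): if $(g,\phi)\in\mc N$ is an absolute minimiser then $|\phi|_g\equiv1$ and $D_g\phi=0$, so $\phi$ is in particular a nowhere vanishing harmonic spinor on the Riemann surface $(M_\gamma,[g])$. For the converse, let $\tilde g$ be a metric on $M_\gamma$ and $\tilde\phi\in\Gamma(\Sigma_{\tilde g}M_\gamma)$ a nowhere vanishing spinor with $D_{\tilde g}\tilde\phi=0$. Since $\tilde\phi$ has no zeros, $u:=2\log|\tilde\phi|_{\tilde g}$ is smooth, so $g:=e^{2u}\tilde g=|\tilde\phi|^4_{\tilde g}\,\tilde g$ is a genuine Riemannian metric in $[\tilde g]$. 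Put $\chi:=e^{-u/2}\tilde\phi=\tilde\phi/|\tilde\phi|_{\tilde g}$ and let $\phi$ be the image of $\chi$ under the bundle isometry $\Sigma_{\tilde g}M_\gamma\to\Sigma_gM_\gamma$. This isometry is $\Spin$-equivariant, so $(g,\phi)$ still carries $\sigma$, and it preserves pointwise norms, so $|\phi|_g\equiv|\chi|_{\tilde g}\equiv1$; hence $(g,\phi)\in\mc N$. Applying \eqref{confdirac} with $\tilde g$ playing the role of the ``old'' metric and $g=e^{2u}\tilde g$ that of the ``new'' one, $D_g\phi$ equals $e^{-3u/2}$ times the image under the same isometry of $D_{\tilde g}\bigl(e^{u/2}\chi\bigr)=D_{\tilde g}\tilde\phi=0$; thus $D_g\phi=0$, and by (a) the pair $(g,\phi)$ is an absolute minimiser.

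There is no real obstacle here: the content is essentially the conformal covariance \eqref{confdirac}, which is already available, together with the observation that the conformal factor $|\tilde\phi|^4_{\tilde g}$ is smooth and strictly positive \emph{exactly because} $\tilde\phi$ has no zeros — which is precisely why the statement must speak of nowhere vanishing, rather than merely harmonic, spinors. The only point requiring care is the bookkeeping: one must match the weight $e^{u/2}$ appearing in \eqref{confdirac} with the normalising factor $|\tilde\phi|^{-1}_{\tilde g}$ so that $e^{u/2}\chi$ is literally the given harmonic spinor $\tilde\phi$, and one must keep straight which of $g$, $\tilde g$ is the ``old'' and which the ``new'' metric. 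Finally one notes that the two constructions are mutually inverse up to a constant rescaling of the spinor (and the corresponding homothety of the metric), which makes precise the word ``correspond''; this is immediate, or can be read off from \cref{conf.class.min}.
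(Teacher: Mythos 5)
Your proof is correct and follows the same route the paper sketches: it uses the characterisation of absolute minimisers as unit harmonic spinors together with the conformal equivariance \eqref{confdirac} of the Dirac operator, normalising by the conformal factor $|\tilde\phi|^4_{\tilde g}$ exactly as in the paper's preceding paragraph. You simply spell out the bookkeeping (matching the weight $e^{u/2}$ with $|\tilde\phi|^{-1}_{\tilde g}$ and keeping the ``old''/``new'' metrics straight) that the paper leaves implicit.
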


For a generic conformal class on $M_\gamma$ nowhere vanishing harmonic spinors do \emph{not} exist. More exactly, it was proven in 
\cite{ammann.dahl.humbert:09} that the set $\mathcal{M}$ of all metrics $g$ with 
$\dim_\C\ker D_g\leq 2$ is open in the $C^1$-topology and dense in the $C^\infty$-topology in the set of all metrics. Thus, with similar arguments as in 
Lemma~\ref{dim_harm_spin} below it follows that nowhere vanishing harmonic spinors cannot exist in conformal classes $[g]$ with $g\in \mathcal{M}$. However, examples do exist for special cases such as hyperelliptic Riemann surfaces, where complex techniques can be used when regarding harmonic spinors as holomorphis sections as explained in~\cref{prelim.spinor.surface.two}. For instance, B\"ar
and Schmutz \cite{baer.schmutz:92} could compute the dimension of the space of harmonic spinors for any spin structure based on earlier work by Martens~\cite{martens:68} and Hitchin~\cite{hi74}. Complex geometry also gives us control on the zero set of the spinors, as highlighted by the following example.

\begin{example}\label{hol.sec}
Recall that hyperelliptic surfaces are precisely the Riemann surfaces of genus $\gamma\geq2$ which arise as two-sheeted branched coverings of the complex projective line (see for instance \cite[Paragraph \S7 and 10]{gu66}). There are exactly $2(\gamma-1)$ branch points $w_1,\ldots,w_{2(\gamma+1)}$, the so-called {\em Weierstra{\ss} points}. For any such Weierstra{\ss} point $w$, the divisor $2(\gamma-1)w$ defines the canonical line bundle $\kappa$ of $M_\gamma$, and $\lambda$ defined by $(\gamma-1)w$ is a holomorphic square root. In particular, there exists a holomorphic section $\varphi_0\in H^0(M_\gamma,\mc O(\lambda))$ -- a positive harmonic spinor -- whose divisor of zeroes is precisely $(\gamma-1)w$, that is, $\varphi_0$ has a unique zero of order $\gamma-1$ at $w$. Furthermore, on a hyperelliptic Riemann surface there exists a meromorphic function $f$ on $M$ with a pole of order $2$ at $w$ and a double zero elsewhere, say at $p\in M$. Hence, if the {\em genus of $M$ is odd}, then $\varphi_1=f^{(\gamma-1)/2}\varphi_0$ is a holomorphic section which has a unique zero at $p$. Regarding $\varphi_1$ as a negative harmonic spinor via the quaternionic structure therefore gives a non-vanishing harmonic spinor $\varphi_0\oplus\varphi_1\in\Gamma(\Sigma_g)$. Rescaling by its norm gives finally the desired absolute minimiser. Note that $\dim_\C H^0(M_\gamma,\mc O(\lambda))=(\gamma+1)/2$ (see for instance \cite[Theorem 14]{gu66}) so that $\lambda$ corresponds to a non-bounding spin structure if $\gamma\equiv1\mod4$, and to a bounding spin structure if $\gamma\equiv3\mod 4$. 
\end{example}

As already mentioned above, there are obstructions against absolut minimisers.

\begin{lemma}\label{dim_harm_spin}
If $(g,\phi)$ is an absolute minimiser over $M_\gamma$ with $\gamma\geq2$, then $d(g)=\dim_\C\ker D_g\geq4$.  
\end{lemma}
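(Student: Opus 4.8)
The plan is to pass to the holomorphic picture of spinors on the Riemann surface $(M_\gamma,[g])$ and count sections of the theta-characteristic $\lambda$ with $\lambda^2\cong\kappa_\gamma$. First, by the corollaries above, an absolute minimiser $(g,\phi)$ with $\gamma\ge1$ satisfies $D_g\phi=0$ and $|\phi|\equiv1$, so $\phi$ is a \emph{nowhere vanishing} harmonic spinor. Writing $\phi=\phi^++\phi^-$ for the decomposition into Weyl spinors and using that $D_g^\pm$ maps $\Gamma(\Sigma_{g\pm})$ to $\Gamma(\Sigma_{g\mp})$, the equation $D_g\phi=0$ forces $\phi^+\in\ker D_g^+$ and $\phi^-\in\ker D_g^-$ separately. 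Since $d(g)=2\,d^+(g)$ and $d^+(g)=\dim_\C\ker D_g^+=\dim_\C H^0(M_\gamma,\lambda)$, with $\deg\lambda=\gamma-1\ge1$ because $\gamma\ge2$, it suffices to exhibit two $\C$-linearly independent holomorphic sections of $\lambda$.

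The two candidates are $\phi^+$ itself and $\psi:=\alpha(\phi^-)$, where $\alpha$ is the quaternionic structure of \cref{prelim.spinor.surface}: as recalled in \cref{prelim.spinor.surface.two}, $\alpha$ is a fibrewise isomorphism interchanging $\Sigma_{g+}$ and $\Sigma_{g-}$ and it carries $\ker D_g^-$ onto $\ker D_g^+$, so $\psi\in H^0(M_\gamma,\lambda)$. I would then argue by contradiction. Suppose $\phi^+$ and $\psi$ are $\C$-proportional (this subsumes the degenerate cases $\phi^+\equiv0$ and $\psi\equiv0$, i.e.\ $\phi^-\equiv0$). If one of them vanishes identically, then $\phi$ — respectively $\alpha(\phi)$ — is a nowhere vanishing holomorphic section of $\lambda$, which is impossible since $\deg\lambda>0$. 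Otherwise $\phi^+$ and $\psi$ have the same divisor of zeros; as $\alpha$ is fibrewise invertible, $\phi^+$ and $\phi^-$ then vanish exactly at the same points, and since a nonzero holomorphic section of $\lambda$ has $\deg\lambda=\gamma-1\ge1$ zeros, there is a point $x_0$ with $\phi^+(x_0)=\phi^-(x_0)=0$, hence $\phi(x_0)=0$, contradicting $|\phi|\equiv1$. Therefore $\phi^+$ and $\psi$ are independent, so $d^+(g)\ge2$ and $d(g)=2\,d^+(g)\ge4$.

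The one substantive step is this linear independence argument, and it is exactly there that the nowhere-vanishing (equivalently, absolute-minimiser) hypothesis is used: proportionality of the two holomorphic sections must be converted, via the fibrewise invertibility of $\alpha$, into a common zero of $\phi^+$ and $\phi^-$, and then $\deg\lambda\ge1$ forces such a zero to exist — which is impossible for a unit spinor. Everything else is already on record: the splitting $\ker D_g=\ker D_g^+\oplus\ker D_g^-$ and the identification $\ker D_g^+\cong H^0(M_\gamma,\lambda)$ in \cref{prelim.spinor.surface.two}, and $d(g)=2\,d^+(g)$ in \cref{bounding.nonbounding}. (If one prefers not to invoke $d(g)=2\,d^+(g)$, one can instead observe directly that $\{\phi^+,\psi,\alpha(\phi^+),\phi^-\}$ are four $\C$-linearly independent harmonic spinors, applying the $\C$-antilinear $\alpha$ to the independent pair $\{\phi^+,\psi\}\subset\ker D_g^+$ to get an independent pair in $\ker D_g^-$.)
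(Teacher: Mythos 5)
Your proof is correct and uses the same essential ingredients as the paper's: the quaternionic structure $\alpha$ interchanging $\ker D^\pm$, the identification $\ker D^+\cong H^0(M_\gamma,\lambda)$ with $\deg\lambda=\gamma-1\geq 1$, and the fact that a unit spinor is nowhere vanishing. The paper organises these differently — it assumes $d(g)=2$ so that $\ker D_g=\phi\,\Hh$ is a $1$-dimensional quaternionic subspace and reads off $(1+i\omega)\phi=2\phi^+=\phi q$, concluding that $\phi^+$ (if $q\neq0$) or $\phi^-$ (if $q=0$) is nowhere vanishing and hence trivialises $\lambda$ — whereas you directly exhibit the two sections $\phi^+,\alpha(\phi^-)\in H^0(M_\gamma,\lambda)$ and rule out proportionality via the degree bound and the no-common-zero argument; these are interchangeable formulations of the same argument.
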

\begin{proof}
As noted in \cref{bounding.nonbounding}, $d(g)$ is even, so it remains to rule out the case $d(g)=2$. Viewing $\Sigma M\to M$ as a quaternionic line bundle with scalar multiplication from the right, $\ker D_g$ inherits a natural quaternionic vector space structure. In particular, it is a $1$-dimensional quaternionic subspace if $d(g)=2$. Since $D(1+i\omega)\phi=D\phi-i\omega D\phi=0$ there is a quaternion $q$ with $(1+i\omega)\phi= \phi q$. If $q\neq 0$, then $(1+i\omega)\phi$ is a nowhere vanishing section of the complex line bundle $\Sigma_+$ and thus yields a holomorphic trivialisation of the holomorphic tangent bundle via the holomorphic description of harmonic spinors in \cref{prelim.spinor.surface}. In particular, $\gamma=1$. If $q=0$, then $\phi$ is a nowhere vanishing section of $\Sigma_- \cong\overline\Sigma_+$ and a similar argument applies.
\end{proof}

Summarising, we obtain the following theorem concerning existence respectively non-existence of absolute minimisers.

\begin{theorem}\label{min_att_or_not}
On $(M_\gamma,\sigma)$ the infimum of $\mc E$ 
\begin{enumerate}[{\rm (i)}]
\item is attained in the cases
\begin{enumerate}[{\rm(a)}]
\item $\gamma=1$ and $\sigma$ is the non-bounding spin structure.
\item $\gamma\geq 3$ and $\sigma$ is a bounding spin structure.
\item $\gamma\geq5$ with $\gamma\equiv1\mod4$ and $\sigma$ is a non-bounding spin structure.
\end{enumerate}

\item is not attained in the cases
\begin{enumerate}[{\rm(a)}]
	\item $\gamma=1$ and $\sigma$ is a bounding spin structure.
	\item $\gamma=2$
	\item $\gamma=3,\,4$ and $\sigma$ is a non-bounding spin structure.
\end{enumerate}
\end{enumerate}
\end{theorem}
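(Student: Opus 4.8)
The plan is to prove \cref{min_att_or_not} by combining the earlier corollaries (absolute minimisers over $M_\gamma$ with $\gamma\geq1$ correspond exactly to nowhere vanishing harmonic spinors, and with $\gamma=0$ to twistor spinors) with the obstruction in \cref{dim_harm_spin} and the existence constructions in \cref{min_surf} and \cref{hol.sec}. So each case reduces to either exhibiting a metric/conformal structure on $M_\gamma$ together with a spin structure $\sigma$ carrying a nowhere vanishing harmonic spinor, or showing no such spinor can exist.

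For part (i): In case (a), $\gamma=1$ and $\sigma$ non-bounding, the flat torus with its parallel spinor provides a nowhere vanishing harmonic spinor and the infimum $\pi|\gamma-1|=0$ is attained (this also follows directly from \cref{inf.E} and the remark identifying harmonic and parallel spinors on the torus). In case (b), $\gamma\geq3$ and $\sigma$ bounding, I would invoke \cref{min_surf}: the triply periodic minimal surface of \cite{tr08} projects to an $M_\gamma$ carrying the bounding spin structure, and the restriction of a parallel spinor on $T^3$ is a nowhere vanishing harmonic spinor; by the corollary after \cref{inf.E} this is an absolute minimiser. In case (c), $\gamma\geq5$ with $\gamma\equiv1\bmod4$ and $\sigma$ non-bounding, I would invoke \cref{hol.sec}: on a hyperelliptic surface of odd genus the section $\varphi_0\oplus\varphi_1$ is a nowhere vanishing harmonic spinor, and the computation $\dim_\C H^0(M_\gamma,\mc O(\lambda))=(\gamma+1)/2$ shows that $\lambda$ is non-bounding precisely when $\gamma\equiv1\bmod4$; rescaling by the norm yields the minimiser.

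For part (ii): In case (a), $\gamma=1$ and $\sigma$ bounding, the infimum is $0$, which would require a harmonic — hence parallel — unit spinor, forcing $g$ flat; but on a flat torus the parallel spinors exist only for the non-bounding spin structure (\cref{bounding.nonbounding}), a contradiction. In case (b), $\gamma=2$, \cref{dim_harm_spin} would give $d(g)\geq4$ for any absolute minimiser, but on a genus $2$ surface $d(g)=2d^+(g)$ with $d^+(g)\leq2$ and in fact $d^+(g)\leq1$ for every spin structure and conformal class (Clifford's theorem / the classical bounds on $H^0(M_2,\lambda)$ since $\deg\lambda=\gamma-1=1$), so $d(g)\leq2<4$, excluding minimisers for all four spin structures. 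In case (c), $\gamma=3,4$ and $\sigma$ non-bounding, \cref{dim_harm_spin} again forces $d(g)\geq4$, i.e. $d^+(g)\geq2$; for a non-bounding spin structure $d^+(g)$ is odd, so $d^+(g)\geq3$, while $\deg\lambda=\gamma-1\in\{2,3\}$ and Clifford's theorem gives $\dim_\C H^0(M_\gamma,\lambda)\leq 1+\deg\lambda/2\leq 2$ (more precisely $h^0(\lambda)\le 2$ using that $\lambda$ is special), contradicting $d^+(g)\geq3$.

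The main obstacle is the non-existence argument in case (ii)(c) (and the genus-$2$ bound): I need a sharp upper bound on $h^0(M_\gamma,\lambda)$ for a theta characteristic on a curve of genus $3$ or $4$, together with the parity constraint $h^0(\lambda)\equiv\varrho(\lambda)\bmod2$ from \cref{bounding.nonbounding}. The parity rules out $h^0=2$ for non-bounding $\lambda$, pushing the requirement to $h^0\geq3$, which Clifford's inequality $h^0(\lambda)\leq 1+\tfrac12\deg\lambda$ for a special divisor then contradicts since $\deg\lambda=\gamma-1\leq3$. I would state these bounds explicitly, citing \cite{gu66} or \cite{hi74}, and note that \cref{dim_harm_spin}'s hypothesis $\gamma\geq2$ is exactly what makes the quaternionic-trivialisation argument applicable, so the only remaining input is elementary Riemann-Roch/Clifford theory.
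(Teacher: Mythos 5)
Your proposal is correct and takes essentially the same approach as the paper's proof: existence via parallel spinors on $T^2$, \cref{min_surf}, and \cref{hol.sec}, and non-existence by combining the lower bound of \cref{dim_harm_spin} with the parity constraints of \cref{bounding.nonbounding} and an upper bound on $\dim_\C\ker D_g$. The only cosmetic differences are that you deduce the dimension bound from Clifford's theorem for theta characteristics where the paper cites Hitchin's bound $d(g)\leq\gamma+1$ (these are the same estimate), and in (ii)(a) you argue via the harmonic-equals-parallel identification on the torus rather than the identical counting argument.
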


\begin{remark}
\phantom{a}\hfill
\begin{enumerate}[(i)]
	\item It remains unclear whether the infimum is attained for a non-bounding spin structure on surfaces of genus $\gamma\geq6$ and 
$\gamma\not\equiv1\mod4$. 
	\item In the case of the sphere ($\gamma=0$) the infimum of $\mc E$ is always attained. This will be discussed in \cref{sphere}.
\end{enumerate}
\end{remark}

\begin{proof}[Proof of Theorem \ref{min_att_or_not}]
(i) The non-bounding spin structure on $T^2$ is the one which admits parallel spinors, while (b) and (c) follow from \cref{min_surf} and \cref{hol.sec} respectively.

\smallskip

(ii) From \cref{bounding.nonbounding} we know that $d(g)$ must be divisible by $4$ if $\sigma$ is bounding while from Hitchin's bound $d(g)\leq\gamma+1$~\cite{hi74}. Therefore, under the conditions stated in (a) or (b), $d(g)\leq 3$ for any metric $g$ on $M_\gamma$ so that for a bounding $\sigma$ we necessarily have $d(g)=0$. If $\gamma\geq2$ we have $d(g)\geq4$ by \cref{bounding.nonbounding} and moreover, $d(g)\equiv2\mod4$ if $\sigma$ is non-bounding. Hence $d(g)\geq6$ which is impossible if $\gamma\leq4$.
\end{proof}

Finally, we characterise the absolute minimisers in terms of $A$ and $\beta$. First we note that $J$ induces a natural complex structure on $T^*\!M\otimes TM$ defined by
$$
i(\alpha\otimes v)=i\alpha\otimes v=\alpha\otimes iv:=\alpha\otimes Jv.
$$
Equipped with this complex structure, $T^*\!M\otimes TM$ becomes a complex rank $2$ bundle, and we have the complex linear bundle isomorphism
\begin{equation}\label{com.lin.iso}
T^*\!M\otimes TM\cong TM^{1,0}\otimes_\C(T^*\!M\otimes\C),\quad\alpha\otimes v\mapsto \alpha\otimes\tfrac 12(v-iJv).
\end{equation}
In this way, considering $A$ as a $TM$-valued $1$-form, the decomposition $\Omega^1(TM)\cong\Omega^{1,0}(TM^{1,0})\oplus\Omega^{0,1}(TM^{1,0})$ gives a decomposition
\ben
A = A^{1,0}+A^{0,1}.
\ee
Since $T^*\!M^{1,0}\otimes_\C TM^{1,0}$ is trivial we may identify $A^{1,0}$ with a smooth function $f:M\to\C$. Further, on any K\"ahler manifold $TM^{0,1}\cong T^*\!M^{1,0}$ so we may identify $A^{0,1}$ with a quadratic differential $q\in\Gamma(\kappa_\gamma^2)$. Finally, $\bar\partial f\in\Omega^{0,1}(M_\gamma)\cong \Gamma(TM^{1,0}_\gamma)$ and $\bar\partial q\in\Omega^{0,1}(\kappa_\gamma^2)\cong\Gamma (TM^{0,1}_\gamma)$. 

\begin{lemma}\label{div.free}
Modulo these isomorphisms we have
\ben
-\frac{1}{2}\div A^{1,0} = \bar\partial f \quad \text{and} \quad - \frac{1}{2}\div A^{0,1}= \overline{\bar\partial q}.
\ee
In particular, $\div A^{1,0}=0$ if and only if $\bar\partial f=0$ and $\div A^{0,1}=0$ if and only if $\bar\partial q =0$.
\end{lemma}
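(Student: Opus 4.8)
The plan is to compute $\operatorname{div} A^{1,0}$ and $\operatorname{div} A^{0,1}$ directly in terms of a local holomorphic coordinate $z=x+iy$ adapted to the K\"ahler structure, and to identify the resulting expressions with $\bar\partial f$ and $\overline{\bar\partial q}$ via the isomorphisms \eqref{com.lin.iso} and the identification $TM^{0,1}\cong T^*\!M^{1,0}$. First I would fix a conformal coordinate $z$ so that $g=e^{2u}(dx^2+dy^2)$ and use the orthonormal frame $e_1=e^{-u}\partial_x$, $e_2=e^{-u}\partial_y$, with $Je_1=e_2$; then $\partial_z=\tfrac12(\partial_x-i\partial_y)$ corresponds up to the conformal factor to $\tfrac12(e_1-iJe_1)$, which is exactly the combination appearing in \eqref{com.lin.iso}. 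Under this identification, writing $A$ as a $TM$-valued $1$-form and decomposing $A=A^{1,0}+A^{0,1}$, the component $A^{1,0}$ is the $(1,0)$-part which lands in $\Omega^{1,0}(TM^{1,0})$ and hence — since $T^*\!M^{1,0}\otimes_\C TM^{1,0}$ is trivial — is a function $f$, while $A^{0,1}\in\Omega^{0,1}(TM^{1,0})\cong\Gamma(\kappa_\gamma^2)$ is a quadratic differential $q$.

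Next I would carry out the divergence computation. Recall $\operatorname{div}_g A=-\sum_k(\nabla_{e_k}A)(e_k,\cdot)$. The key point is that on a K\"ahler surface $\nabla J=0$, so the splitting into $(1,0)$ and $(0,1)$ parts is parallel and $\operatorname{div}$ respects it; hence $\operatorname{div} A^{1,0}$ and $\operatorname{div} A^{0,1}$ can be computed separately. For $A^{1,0}$, written in the coordinate frame as $f\,dz\otimes\partial_z$ up to conformal factors, the operator $-\tfrac12\operatorname{div}$ acting on it produces the antiholomorphic derivative $\partial_{\bar z} f$; the conformal weights of $dz$, $\partial_z$, the metric contraction $\sum_k(\nabla_{e_k}\cdot)(e_k,\cdot)$, and the Christoffel terms coming from $\nabla e_k$ all conspire (this is the content of the K\"ahler identity) to leave exactly $\bar\partial f\in\Omega^{0,1}(M_\gamma)\cong\Gamma(TM^{1,0}_\gamma)$, with no curvature correction. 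For $A^{0,1}$, written as $q\,d\bar z\otimes\partial_z$, the same computation gives $\partial_{\bar z} q$, but now $q$ is a section of $\kappa_\gamma^2$ and $\bar\partial q\in\Omega^{0,1}(\kappa_\gamma^2)$; the identification $\Omega^{0,1}(\kappa_\gamma^2)\cong\Gamma(TM^{0,1}_\gamma)$ introduces a complex conjugation relative to the previous case (because $TM^{0,1}=\overline{TM^{1,0}}$), which is precisely why the answer is $\overline{\bar\partial q}$ rather than $\bar\partial q$. The ``In particular'' clause is then immediate, since $\bar\partial f=0$ resp.\ $\bar\partial q=0$ is equivalent to the vanishing of $-\tfrac12\operatorname{div} A^{1,0}$ resp.\ $-\tfrac12\operatorname{div} A^{0,1}$.

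The main obstacle is bookkeeping the conformal factors and Christoffel symbols carefully enough to see that the naive coordinate expression $\partial_{\bar z}$ really is the intrinsic $\bar\partial$ on the relevant bundle, with no leftover torsion or curvature term; this is exactly where K\"ahler-ness ($\nabla J=0$, equivalently the compatibility of the Levi-Civita and Chern connections on $TM^{1,0}$) is used, and I would invoke it explicitly. A clean way to organise this is to recall that on a Riemann surface the Chern connection on $\kappa_\gamma^k$ has the property that its $(0,1)$-part is $\bar\partial$, and that under $TM\otimes\C\cong TM^{1,0}\oplus TM^{0,1}$ the Levi-Civita connection agrees with the Chern connection on each summand; then $\operatorname{div}=-\operatorname{tr}_g\nabla$ restricted to the $(1,0)$-valued forms picks out precisely the $\bar\partial$-operator (up to the factor $-\tfrac12$ coming from the normalisation $v\odot w$ and the trace convention), and the two cases differ only by whether the target line bundle is $TM^{1,0}$ itself or $\kappa_\gamma^2\cong(TM^{0,1})^{\otimes 2}$, accounting for the conjugation.
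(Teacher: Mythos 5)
Your plan is correct in outline, but it follows a noticeably different route than the paper's proof, and the gap you flag as ``the main obstacle'' (tracking the conformal factor $e^{2u}$ through the Christoffel symbols, the contraction, and the bundle identifications) is precisely what the paper avoids. The paper picks a point $p$, chooses a holomorphic coordinate $z=x+iy$ around $p$, and takes the orthonormal frame $\{e_1,e_2\}$ to be synchronous at $p$ \emph{and} equal to $\{\partial_x,\partial_y\}$ there; thus at $p$ the conformal factor is $1$ to first order and all Christoffel symbols vanish, so $\div A^{1,0}$ and $\div A^{0,1}$ reduce to undifferentiated combinations of $e_i(\alpha),e_i(\beta),e_i(\gamma),e_i(\delta)$ which one compares directly with $\partial_{\bar z}(\alpha+i\beta)$ and $\partial_{\bar z}(\gamma-i\delta)$. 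No appeal to the Chern connection, K\"ahler identities, or conformal weights is needed; the $-\tfrac12$ and the conjugation in the $A^{0,1}$ case fall out mechanically. Your more conceptual argument (Levi-Civita restricted to $TM^{1,0}$ on a K\"ahler surface equals the Chern connection, whose $(0,1)$-part is $\bar\partial$, hence $\div=-\tr_g\nabla$ acting on a $(1,0)$- resp.\ $(0,1)$-form with values in $TM^{1,0}$ reduces to $\bar\partial$ on the coefficient) does work, and is arguably more illuminating, but to be a proof you would actually have to carry out the normalisation: the factor $-\tfrac12$ comes from $g^{-1}(d\bar z,dz)=2$ in the metric contraction and is not automatic, and the conjugation in the second case comes from the specific isomorphism $\Omega^{0,1}(\kappa_\gamma^2)\cong\Gamma(TM^{0,1})$ you are implicitly using, which must be pinned down. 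Also note a small notational slip: you write $A^{0,1}=q\,d\bar z\otimes\partial_z$, but $q$ is the full quadratic differential $h\,dz^2$; the coefficient appearing there should be $h$ (or $\bar h$, depending on whether you represent $A^{0,1}$ via the image of the isomorphism \eqref{com.lin.iso} or via its conjugate), not $q$ itself.
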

\begin{proof}
If we write
$$
A=\begin{pmatrix}a&c\\b&d\end{pmatrix}
$$
in terms of a positively oriented local orthonormal frame $\{e_i\}$, then
\beq\label{deco.endo}
A^{1,0}=\begin{pmatrix}\alpha&-\beta\\\beta&\alpha\end{pmatrix}=\tfrac 12\begin{pmatrix}a+d&-b+c\\b-c&a+d\end{pmatrix},
\quad
A^{0,1}=\begin{pmatrix}\gamma&\delta\\\delta&-\gamma\end{pmatrix}=\tfrac 12\begin{pmatrix}a-d&b+c\\b+c&-a+d\end{pmatrix}.
\ee
Hence $A^{1,0}$ is the sum of the trace and skew-symmetric part of $A$, while $A^{0,1}$ is the traceless symmetric part of $A$. Now fix a local holomorphic coordinate $z=x+iy$ and assume that $\{e_i\}$ is synchronous at $z=0$, i.e.\ $e_1(0)=\partial_x(0)$ and $e_2(0)=\partial_y(0)$. In particular, $\partial_z=(\partial_x-i\partial_y)/2$ corresponds to $e_1$ under the identification~\eqref{com.lin.iso}. From~\eqref{deco.endo}
\ben
A^{1,0}=(\alpha+i\beta)\,dz\otimes\partial_z
\quad\text{and}\quad
A^{0,1}=(\gamma-i\delta)\,dz\otimes\partial_{\bar z},
\ee
whence $f=\alpha+i\beta$ and $q=(\gamma-i\delta)\,dz^2$. Then at $z=0$,
\ben
\div A^{1,0}=(-e_1(\alpha)+e_2(\beta))e_1-(e_2(\alpha)+e_1(\beta))e_2
\ee
and
\ben
\div A^{0,1}=-(e_1(\gamma)+e_2(\delta))e_1+(e_2(\gamma)-e_1(\delta))e_2.
\ee
Computing $\bar\partial f=\partial_{\bar z}(\alpha+i\beta)\, d\bar z$ and $\bar\partial q=\partial_{\bar z} (\gamma-i\delta)\, d\bar{z}\otimes dz^2$ gives immediately the desired result.
\end{proof}

\begin{remark}
In particular, for a critical point $(g,\phi)$ the symmetric $(2,0)$-tensor associated with $A^{0,1}$ is a {\em tt-tensor}, that is, traceless and {\em transverse} (divergence-free). For $\gamma\geq2$, the previous lemma therefore recovers the standard identification of the space of tt-tensors with the tangent space of Teichm\"uller space given by holomorphic quadratic differentials. 
\end{remark}

We are now in a position to give an alternative characterisation of absolute minimisers if $\gamma\geq1$. The case of the sphere will be handled in \cref{min_sphere}.

\begin{proposition}\label{tt.tensor}
Let $\gamma\geq1$. The following statements are equivalent:
\begin{enumerate}[{\rm (i)}]
	\item\label{abs.min} $(g,\phi)$ is an absolute minimiser.
	\item\label{A.tt} $\nabla_X\phi=A(X)\cdot\phi$ for a traceless symmetric endomorphism $A$. 
	\item\label{beta.van} $(g,\phi)$ is critical and $\beta=0$.
\end{enumerate}
\end{proposition}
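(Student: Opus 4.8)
The plan is to prove the three implications \eqref{abs.min} $\Rightarrow$ \eqref{beta.van} $\Rightarrow$ \eqref{A.tt} $\Rightarrow$ \eqref{abs.min}, using the characterisation of absolute minimisers via $D_g\phi=0$ from the corollary following \cref{inf.E} together with the curvature identities of \cref{curvature} and \cref{cor_gauss}, and the decomposition of $A$ from \cref{div.free}.

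\textbf{Step 1: \eqref{abs.min} $\Rightarrow$ \eqref{beta.van}.} If $(g,\phi)$ is an absolute minimiser then $D_g\phi=0$ by the corollary after \cref{inf.E}, and in particular $(g,\phi)$ is a critical point, so \cref{beta-is-harmonic} gives $\div\beta=0$, $\div A=0$, and the third Euler--Lagrange equation; moreover $\beta$ is a harmonic $1$-form. From \cref{diracpair}, $D\phi = \tr A\,\phi + \tr(A\circ J)\,\omega\cdot\phi - (\beta\circ J)^\sharp\cdot\phi$; the three summands lie in pairwise orthogonal directions (namely $\phi$, $\omega\cdot\phi$, and $\phi^{\perp}\cap(\omega\cdot\phi)^\perp$, using the local frame $(\phi, e_1\cdot\phi, e_2\cdot\phi,\omega\cdot\phi)$), so $D\phi=0$ forces $\tr A=0$, $\tr(A\circ J)=0$, and $\beta=0$. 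This already gives \eqref{beta.van} (and in fact a bit more, that $A$ is traceless and commutes with $J$ in the appropriate sense, i.e. $A^{1,0}$ vanishes).

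\textbf{Step 2: \eqref{beta.van} $\Rightarrow$ \eqref{A.tt}.} Assume $(g,\phi)$ is critical with $\beta=0$. Then $\nabla_X\phi=A(X)\cdot\phi$ by \cref{nabdec}, so it only remains to show $A$ is traceless and symmetric. By \cref{beta-is-harmonic}, $\div A=0$; decomposing $A=A^{1,0}+A^{0,1}$ and invoking \cref{div.free}, $\div A=0$ implies $\bar\partial f=0$ and $\bar\partial q=0$, i.e. $f$ is a holomorphic function on the closed surface $M_\gamma$ (hence constant) and $q$ is a holomorphic quadratic differential. To pin down $A$ I would use the curvature relation: from \cref{cor_gauss}\,\eqref{K.detA}, $K=4\det A$, and integrating against $dv^g$ with Gau\ss--Bonnet gives $\int_M\det A\,dv^g=\tfrac14\int_M K\,dv^g=\pi(\gamma-1)$. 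Writing $A=\tfrac12(\tr A)\,\Id + A_{\mathrm{skew}} + A_0$ with $A_0=A^{0,1}$ the traceless symmetric part, one computes $\det A$ in terms of these pieces; together with the third critical-point equation $(\nabla_{J(\cdot)}\beta)^{sym}=2(A^tA+\beta\otimes\beta)_0$, which under $\beta=0$ reduces to $(A^tA)_0=0$, i.e. $A^tA$ is pure trace, one forces the skew part of $A$ to vanish (and $f$ purely imaginary — but a constant purely imaginary $f$ is the skew part, so $f=0$). Concretely, $(A^tA)_0=0$ says $A^tA=\tfrac12|A|^2 g$, which for a $2\times2$ matrix means $A$ is a scalar multiple of an orthogonal matrix at each point; combined with $\bar\partial$-holomorphicity of the $(1,0)$-part this scalar-times-rotation behaviour, being both holomorphic and of the constrained form, collapses to the traceless symmetric case. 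So $A=A_0$ is traceless symmetric, giving \eqref{A.tt}.

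\textbf{Step 3: \eqref{A.tt} $\Rightarrow$ \eqref{abs.min}.} If $\nabla_X\phi=A(X)\cdot\phi$ with $A$ traceless symmetric, then $\beta=0$ in \cref{nabdec}, and \cref{diracpair} gives $D\phi=\tr A\,\phi+\tr(A\circ J)\,\omega\cdot\phi$; but $A$ traceless symmetric means $\tr A=0$ and $\tr(A\circ J)=0$ (the latter because $A\circ J$ is traceless skew, or directly: for symmetric $A$ with entries as in \cref{div.free}, $\tr(A\circ J)=b-c=0$). Hence $D_g\phi=0$, so $\mc E(g,\phi)=\tfrac12\int_M|D\phi|^2-\tfrac14\int_M K=\pi(\gamma-1)=\pi|\gamma-1|$ by \cref{Dirac_energy} and Gau\ss--Bonnet (using $\gamma\geq1$), and by \cref{inf.E} this is the infimum, so $(g,\phi)$ is an absolute minimiser.

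The main obstacle I anticipate is \textbf{Step 2}, specifically extracting traceless-symmetry of $A$ rather than just $\div A=0$: the vanishing of $\beta$ already reduces the third Euler--Lagrange equation to $(A^tA)_0=0$, and the real content is combining this pointwise algebraic constraint with the global holomorphicity of $A^{1,0}$ on a closed surface. One must be careful that $(A^tA)_0=0$ alone does not kill the skew part — it is the interaction with $\det A = K/4$ and the closedness/holomorphicity that does — and the argument should probably be phrased invariantly (via $f$ constant and the structure of $A^tA$) to avoid frame-dependent casework. Everything else is a direct bookkeeping exercise with \cref{diracpair}, \cref{nabdec}, and the orthogonality of the local spinor frame.
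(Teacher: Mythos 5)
Steps 1 and 3 of your argument are correct and track the paper's treatment of the easy implications (absolute minimiser $\Leftrightarrow$ $D_g\phi=0$ $\Leftrightarrow$ $\tr A = \tr(A\circ J) = 0$ and $\beta=0$, via \eqref{diracpair}). The gap is in Step 2.

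Your Step 2 asserts that $\div A = 0$ implies $\bar\partial f = 0$ and $\bar\partial q = 0$ separately, hence that $f$ is a holomorphic (so constant) function. This does not follow. Lemma \ref{div.free} identifies $-\tfrac12\div A^{1,0}$ with $\bar\partial f$ and $-\tfrac12\div A^{0,1}$ with $\overline{\bar\partial q}$, both regarded as sections of $TM^{1,0}$ after the stated isomorphisms; the critical-point equation $\div A = 0$ is therefore the \emph{single} complex equation $\bar\partial f + \overline{\bar\partial q} = 0$ (equivalently $\partial_{\bar z}f + \partial_z\bar h = 0$), not two. Without knowing $\div A^{1,0}=0$ and $\div A^{0,1}=0$ separately you cannot conclude $f$ is constant, and the rest of your reasoning ("this scalar-times-rotation behaviour \dots collapses to the traceless symmetric case") is left unsupported. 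Note also that $(A^tA)_0 = 0$ is equivalent to $fh = 0$ pointwise, not to the vanishing of one of the summands globally, so you really do have to rule out a nontrivial partition of $M_\gamma$ into the regions $\{f\neq 0\}$ and $\{h\neq 0\}$. (There is also a sign slip: $\tfrac14\int_M K\,dv^g = \pi(1-\gamma)\leq 0$ for $\gamma\geq 1$, not $\pi(\gamma-1)$.)

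The paper closes this gap by a unique-continuation/maximum-principle argument. From $(A^tA)_0=0$ one gets $2A^tA=|A|^2\Id$, hence $2|A|^2=|K|$ by \cref{cor_gauss}~\eqref{K.detA}; combined with the pointwise identity $|D\phi|^2=|A|^2+K/2$ (from the proof of \cref{curvature}, using $d\beta=0$), this gives $D\phi=0$ on $U=\{K<0\}$. If $U$ is nonempty and not dense, on $U$ the operator $A$ is a divergence-free traceless symmetric endomorphism, hence a holomorphic quadratic differential; since $U$ sits in the noncompact Riemann surface $M_\gamma\setminus\{p\}$, the line bundle $\kappa^2$ trivialises holomorphically there, the coefficients of $A$ become harmonic, continuous up to $\partial U$, and vanish on $\partial U$ (where $K=0$ forces $|A|=0$), so $A\equiv 0$ on $U$ by the maximum principle — contradicting $K=4\det A<0$. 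Thus $U$ is either dense (so $D\phi\equiv 0$ by continuity) or empty (so $K\geq 0$, which forces $K\equiv 0$ and $A\equiv 0$ by Gau\ss{}-Bonnet for $\gamma\geq 1$). This global/analytic input — not a pointwise algebraic manipulation of $A^tA$ and $\det A$ — is what your proposal is missing.
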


\begin{remark}
In particular, we recover the equivalence (ii)$\Leftrightarrow$(iii) of \cite[Theorem 13]{fr98} for the case $H=0$.
\end{remark}

\begin{proof}
By \cref{inf.E}, $(g,\phi)$ is an absolute minimiser if and only if $D\phi=0$. From~\eqref{diracpair} this is tantamount to $\tr A=0$, $\tr(A\circ J)=0$ and $\beta=0$. The trace conditions are equivalent to $A$ being symmetric and traceless whence the equivalence between \eqref{abs.min} and \eqref{A.tt}. Furthermore, \eqref{A.tt} immediately forces $\beta=0$. Conversely, \eqref{beta.van} together with the critical point equation in \cref{critical} implies $2A^tA=|A|^2\Id$, whence $2|A|^2=|K|$ by \cref{cor_gauss} \eqref{K.detA}. In particular, $2|A|^2=-K$ on the open set $U=\{x\in M_\gamma : K(x)<0\}$. Assume that $U$ is non-empty and not dense in $M_\gamma$, i.e.\ $\bar U \subset M_\gamma \setminus \{p\}$ for some $p \in M_\gamma$. Without loss of generality we may also assume $U$ to be connected. On its boundary the curvature vanishes so that in particular, $|A|=0$ on $\partial U$. Further, $|D\phi|^2=|A|^2+K/2=0$ on $U$ as a simple computation in an orthonormal frame using \cref{diracpair} reveals. As before, $D\phi=0$ implies that $A$ is traceless symmetric and divergence-free over $U$. In particular, $A$ corresponds to a holomorphic quadratic differential by \cref{div.free}. Since every holomorphic line bundle on the non-compact Riemann surface $M_\gamma \setminus \{p\}$ is holomorphically trivial (see for instance~\cite[Theorem 30.3]{fo81}), over $U$ the coefficients of $A$ arise as the real and imaginary part of a holomorphic function and are therefore harmonic. However, they are continuous on $\bar U$ and vanish on the boundary, hence $A=0$ by the maximum principle. In particular, $K=0$ on $U$, a contradiction. This leaves us with two possibilities. Either $U$ is dense in $M_\gamma$ or $U$ is empty. By Gau\ss{}-Bonnet the second case can only happen for genus $1$ and $g$ must be necessarily flat. In any case, $\phi$ is harmonic and therefore defines an absolute minimiser.
\end{proof}

\begin{corollary}\label{trichotomy}
Let $\gamma \geq 1$. If $(g,\phi)$ is an absolute minimiser, then $A$ is a tt-tensor. Furthermore, $K\equiv0$ if $\gamma=1$ and $K\leq0$ with only finitely many zeroes if $\gamma\geq2$.
\end{corollary}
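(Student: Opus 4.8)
The plan is to deduce \cref{trichotomy} almost entirely from \cref{tt.tensor} and the curvature identities already established. By \cref{tt.tensor}, an absolute minimiser satisfies $\nabla_X\phi = A(X)\cdot\phi$ with $A$ traceless symmetric, and by the equivalence with \eqref{beta.van} the pair $(g,\phi)$ is critical with $\beta=0$; hence \cref{div.free} applies and shows that the traceless symmetric part of $A$ — which is all of $A$ — is divergence-free, i.e.\ $A$ is a tt-tensor. (Equivalently, one invokes the ``$\bar\partial q=0$'' half of \cref{div.free} directly from $D\phi=0$.) This gives the first assertion.

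For the curvature statements I would use \cref{cor_gauss}\eqref{K.detA}, namely $K=4\det A$. Since $A$ is traceless symmetric its eigenvalues are $\pm|A|/\sqrt2$, so $\det A = -|A|^2/2 \le 0$, giving $K\le 0$ everywhere. If $\gamma=1$, Gau\ss{}-Bonnet forces $\int_{M_1}K=0$, and combined with $K\le 0$ this yields $K\equiv 0$. For $\gamma\ge 2$ the remaining point is that the zero set of $K$ is finite. Here I would argue that $K$ vanishes exactly where $A$ vanishes (since $K=-2\det(\text{sym part})$, wait — more precisely $K = 4\det A = -2|A|^2$, so $K(x)=0 \iff A(x)=0$), and then use that $A^{0,1}\cong q$ is a \emph{holomorphic} quadratic differential on the compact Riemann surface $M_\gamma$: a nonzero holomorphic section of $\kappa_\gamma^2$ has only finitely many zeroes. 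One must first rule out $q\equiv 0$: if $q\equiv 0$ then $A\equiv 0$, so $\nabla\phi=0$, so $g$ is flat, contradicting $\gamma\ge 2$ by Gau\ss{}-Bonnet. Hence $q$ is a nonzero holomorphic quadratic differential and its zero divisor is finite, so $\{K=0\}$ is finite.

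The only mild subtlety is making the identification ``$K=0 \iff A=0$'' and ``zeroes of $A$ = zeroes of $q$'' precise: from $\beta=0$ and criticality one has $2A^tA = |A|^2\,\Id$ (as derived in the proof of \cref{tt.tensor}), so $|A|^2 = -K$ pointwise on all of $M_\gamma$ (not merely on $U$), and $A = A^{0,1}$ corresponds to $q$ with $|A|^2$ a positive multiple of the pointwise norm of $q$; thus the zero set of $K$ equals the zero set of $q$. I expect the genuine content to be almost nil — everything needed is assembled in \cref{tt.tensor}, \cref{cor_gauss}, \cref{div.free}, Gau\ss{}-Bonnet, and the elementary fact that a nontrivial holomorphic quadratic differential on a closed Riemann surface has finitely many zeroes. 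The one place to be careful is the dichotomy ``$q\equiv0$ versus $q\not\equiv0$'' and the resulting flatness obstruction for $\gamma\ge2$, but this is already spelled out at the end of the proof of \cref{tt.tensor}, so it can simply be quoted.
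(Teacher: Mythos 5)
Your proof is correct and follows the route the paper clearly intends for this corollary (which is stated without an explicit proof): Proposition~\ref{tt.tensor} gives that $A$ is traceless symmetric with $\beta=0$, criticality gives $\div A=0$, and Lemma~\ref{div.free} turns $A=A^{0,1}$ into a holomorphic quadratic differential; the formula $K=4\det A=-2|A|^2$ from Corollary~\ref{cor_gauss} then gives $K\le 0$, Gau\ss{}--Bonnet handles $\gamma=1$, and for $\gamma\geq 2$ the nonvanishing of $q$ plus finiteness of zeroes of a nonzero holomorphic section of $\kappa_\gamma^2$ finishes the argument. All the ingredients you cite are exactly the ones assembled just before the corollary, and your handling of the edge case $q\equiv 0$ is correct.
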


\begin{remark}
\phantom{a}\hfill
\begin{enumerate}[(i)]
	\item As we will see in \cref{torus} there exist flat critical points which are not absolute minimisers.
	\item If $\gamma\geq2$ in \cref{tt.tensor} \eqref{beta.van}, it suffices to assume that $|\beta|=const$. Indeed, $(\beta\otimes\beta)_0$ induces a holomorphic section of $\kappa_\gamma^2$ and has therefore at least one zero. At such a zero, $(\beta\otimes\beta)_0=0$, hence $\beta=0$ in this point and thus everywhere.
	\item If $\beta=0$, then \cref{int.cond.Abeta} implies $\div(A\circ J)=0$. However, this does not yield an extra constraint as $\div(A\circ J)=\div A$ for $A$ symmetric.
\end{enumerate}
\end{remark}
%
%
%
\section{Critical points on the sphere}\label{sphere}
In this section we completely classify the critical points in the genus $0$ case where $M_\gamma$ is diffeomorphic to the sphere. In particular, up to isomorphism there is only one spin structure for $S^2$ is simply-connected.
%
\subsection{Twistor spinors}\label{twi.spi}
For a general Riemannian spin manifold $(M^n,\sigma,g)$ with spinor bundle $\Sigma_gM^n\to M^n$, a {\em Killing spinor} $\phi\in\Gamma(\Sigma_gM^n)$ satisfies
\ben
\nabla_X\psi=\lambda X\cdot\psi
\ee 
for any vector field $X\in\Gamma(TM)$ and some fixed $\lambda\in\C$, the so-called {\em Killing constant}. In particular, the underlying Riemannian manifold is Einstein with $\Ric=4\lambda^2 g$ so that $\lambda$ is either real or purely imaginary. If $M$ is compact and connected, only Killing spinors of {\em real} type, where $\lambda\in\R$, can occur \cite[Theorem 9 in Section 1.5]{bfgk91}. More generally we can consider {\em twistor spinors}. By definition, these are elements of the kernel of the {\em twistor operator} $T_g=\mr{pr}_{\ker\mu}\circ \nabla$, where $\mr{pr}_{\ker\mu}:\Gamma(T^*\!M\otimes\Sigma)\to\Gamma(\ker\mu)$ is projection on the kernel of the Clifford multiplication $\mu:T^*\!M\otimes\Sigma M\to\Sigma M$. Equivalently, a twistor spinor satisfies  
\ben
\nabla_X \phi=- \tfrac{1}{n} X \cdot D\phi
\ee
for all $X \in \Gamma(TM)$. The subsequent alternative characterisation will be useful for our purposes. The following proposition was stated (in slightly
different form) in~\cite{fr90}, see e.g.~\cite[Theorem 2 in Section 1.4]{bfgk91} for a proof.

\begin{proposition}\label{twistor}
On a Riemannian spin manifold $M^n$ the following conditions are equivalent:
\begin{enumerate}[{\rm (i)}]
	\item $\phi$ is a twistor spinor.
	\item $X \cdot \nabla_X \phi$ does not depend on the unit vector field $X$.
\end{enumerate}
\end{proposition}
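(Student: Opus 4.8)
The plan is to prove the two implications separately, working with a local orthonormal frame and exploiting the decomposition of $T^*\!M\otimes\Sigma$ into the image of Clifford multiplication and its orthogonal complement $\ker\mu$. Recall that for any spinor field $\phi$ one has the pointwise decomposition $\nabla_X\phi = -\tfrac1n X\cdot D\phi + (T_g\phi)(X)$, where $(T_g\phi)(X)$ is the twistorial part; this is just the fiberwise orthogonal splitting applied to $\nabla\phi$. The equivalence will follow once I understand how the expression $X\cdot\nabla_X\phi$ interacts with this splitting.

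\textbf{(i)$\Rightarrow$(ii).} Assume $\phi$ is a twistor spinor, so $\nabla_X\phi = -\tfrac1n X\cdot D\phi$ for all $X$. Then for a unit vector field $X$ I compute $X\cdot\nabla_X\phi = -\tfrac1n X\cdot X\cdot D\phi = \tfrac1n|X|^2 D\phi = \tfrac1n D\phi$, using the Clifford relation $X\cdot X = -|X|^2$. This is manifestly independent of the choice of unit $X$, which is (ii).

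\textbf{(ii)$\Rightarrow$(i).} This is the substantive direction and the main obstacle, since I must extract the twistor equation from an invariance hypothesis rather than an explicit formula. The idea is a polarization argument. Suppose $X\cdot\nabla_X\phi$ is the same spinor for every unit vector field $X$; fix a point $p$ and work in the fiber there. For orthonormal $e_i, e_j$ with $i\neq j$, the vectors $\tfrac{1}{\sqrt2}(e_i+e_j)$ and $\tfrac{1}{\sqrt2}(e_i-e_j)$ are unit vectors, and equating $X\cdot\nabla_X\phi$ for these (and for $e_i$, $e_j$ themselves) yields, after expanding and using bilinearity, the relations $e_i\cdot\nabla_{e_i}\phi = e_j\cdot\nabla_{e_j}\phi$ and $e_i\cdot\nabla_{e_j}\phi + e_j\cdot\nabla_{e_i}\phi = 0$ for all $i\neq j$. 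The first relation forces $e_i\cdot\nabla_{e_i}\phi = \tfrac1n\sum_k e_k\cdot\nabla_{e_k}\phi = \tfrac1n D\phi$ for every $i$, hence $\nabla_{e_i}\phi = -\tfrac1n e_i\cdot D\phi$. To conclude that the twistor equation holds for \emph{all} $X$, not just frame vectors, I note that both sides of $\nabla_X\phi = -\tfrac1n X\cdot D\phi$ are tensorial (pointwise linear) in $X$, so validity on a basis at each point suffices. One should double-check the second family of relations ($e_i\cdot\nabla_{e_j}\phi + e_j\cdot\nabla_{e_i}\phi=0$) is automatically implied; in fact multiplying $\nabla_{e_j}\phi = -\tfrac1n e_j\cdot D\phi$ by $e_i$ and symmetrizing recovers it, so no contradiction arises and the derived equation is consistent.

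\textbf{Remark on the low-dimensional subtlety.} In dimension $n=2$ the argument still goes through, but one should note that Clifford multiplication by a unit vector is invertible and the splitting $T^*\!M\otimes\Sigma = \mu^{-1}\text{-part}\oplus\ker\mu$ remains valid (with $\ker\mu$ of the appropriate rank), so the polarization step is unaffected. The only care needed is that the frame $(e_1,e_2)$ be positively oriented when one later wants to relate $D\phi$ to the pair $(A,\beta)$ via \cref{diracpair}, but for the equivalence itself orientation plays no role.
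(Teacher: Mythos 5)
Your proof is correct. Worth noting: the paper does not prove this proposition itself; it merely cites \cite{fr90} and \cite[Theorem 2 in Section 1.4]{bfgk91}, so there is no in-paper argument to compare against. Your polarization argument is the standard one found in those references. The forward direction is immediate from the Clifford relation $X\cdot X=-|X|^2$, and the converse is correctly handled: applying the hypothesis to $X=e_i$ and $X=e_j$ already forces $e_i\cdot\nabla_{e_i}\phi=\tfrac1n D\phi$ for each $i$, hence $\nabla_{e_i}\phi=-\tfrac1n e_i\cdot D\phi$ by left-multiplication with $e_i$, and linearity in $X$ then gives the twistor equation for all $X$. Your extra polarization using $(e_i\pm e_j)/\sqrt2$ and the resulting relation $e_i\cdot\nabla_{e_j}\phi+e_j\cdot\nabla_{e_i}\phi=0$ is not actually needed to close the argument, as you yourself observe — it serves only as a consistency check. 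Two minor cosmetic points: the opening paragraph invokes the splitting $\nabla\phi=-\tfrac1n\sum e^k\otimes e_k\cdot D\phi+T_g\phi$ into the Clifford and twistorial parts, but the actual proof never uses this decomposition, so it could be omitted; and the remark about positively oriented frames in dimension two is irrelevant to the proposition as stated, since \cref{diracpair} plays no role here.
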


\begin{example}\label{examp} 
(cf.~\cite[Example 2 in Section 1.5]{bfgk91}) On the round sphere $S^n$ there are Killing spinors $\psi_{\pm}\neq 0$ with $\lambda_\pm=\pm\frac12$. Furthermore, 
$\phi_{ab}=a\psi_++b \psi_-$ for constants $a,\,b\in\R$ are twistor spinors 
which are not Killing for $ab\neq 0$. Indeed, Killing spinors must have constant length, while $\phi_{ab}$ will have zeroes in general. If $n$ is even, then a spinor $\psi_+$ is a Killing spinor for 
the Killing constant $\tfrac{1}{2}$ if and only if $\psi_-:=\omega\cdot\psi_+$ is a Killing spinor for $-\frac12$. Moreover, if $n\equiv 2 \mod 4$, then these $\psi_\pm$ are pointwise orthogonal. In this particular case $\phi_{ab}=a\psi_++b \psi_-$ is a twistor spinor of constant length.
\end{example}

Using~\cite{habermann:90} the following lemma is straightforward.
 
\begin{lemma}\label{lem.twodim}
Let $(M_\gamma,\sigma)$ be a spin surface and $(g,\phi)\in\mc N$. Then the following conditions are equivalent.
\begin{enumerate}[{\rm(i)}]
	\item\label{enum.twist} $\phi$ is a twistor spinor.
	\item\label{enum.ab} There exist $a,b \in \R$ such that $\nabla_X \phi = a X \cdot \phi + b J(X) \cdot \phi$ for all $X \in \Gamma(TM)$.
	\item\label{enum.kill} There exist $\alpha\in\mR$ and a unit Killing spinor $\psi$ such that
\ben
\phi=\cos\alpha\;\psi+\sin\alpha\;\omega\cdot\psi.
\ee
\end{enumerate}
Furthermore, the Killing constant $\lambda$ of $\psi$ is given by $\lambda= \sqrt{a^2+b^2}$. 
\end{lemma}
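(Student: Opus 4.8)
The plan is to establish the cyclic chain of implications \eqref{enum.twist} $\Rightarrow$ \eqref{enum.ab} $\Rightarrow$ \eqref{enum.kill} $\Rightarrow$ \eqref{enum.twist}, together with the computation of the Killing constant. For \eqref{enum.twist} $\Rightarrow$ \eqref{enum.ab}, I would unpack the definition of a twistor spinor via the decomposition \eqref{nabdec}: writing $\nabla_X\phi = A(X)\cdot\phi + \beta(X)\,\omega\cdot\phi$, I would use \cref{twistor}(ii), i.e.\ that $X\cdot\nabla_X\phi$ is independent of the unit vector $X$. Plugging in $X=e_1$ and $X=e_2$ for a local orthonormal frame and using $\omega = e_1\cdot e_2$ together with the Clifford relations, the independence condition forces the off-diagonal and trace-free-symmetric parts of $A$ to vanish and pins down $\beta$ in terms of the remaining entry of $A$. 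Concretely one should find $A = a\,\Id$ and $\beta(X) = b\,(J X)^\flat$ for scalar functions $a,b$, which rewrites as $\nabla_X\phi = aX\cdot\phi + bJ(X)\cdot\phi$ since $(JX)^\flat\cdot\omega\cdot\phi = -J(X)\cdot\phi$ (using $J^* = -\star$ on $1$-forms and the stated conventions). That $a,b$ are in fact \emph{constant} would follow from a standard integrability argument: differentiating the twistor equation and comparing with the curvature, or alternatively citing \cite{habermann:90} as the excerpt suggests — in two dimensions the twistor equation is conformally invariant and its solution space is finite-dimensional, and the only obstruction is curvature-type.

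For \eqref{enum.ab} $\Rightarrow$ \eqref{enum.kill}, I would observe that if $a^2+b^2 = 0$ then $\phi$ is parallel, which is the degenerate $\lambda=0$ case (take $\alpha$ so that $\cos\alpha\,\psi + \sin\alpha\,\omega\cdot\psi = \phi$ trivially; or treat it separately). Assuming $\lambda := \sqrt{a^2+b^2} > 0$, write $a = \lambda\cos\theta$, $b = \lambda\sin\theta$ for some function (a priori) $\theta$, and set $\psi := \cos\alpha\,\phi + \sin\alpha\,\omega\cdot\phi$ for a constant $\alpha$ to be chosen; by the circle action \eqref{circle.action} and the fact that $\omega$ is parallel, $\nabla_X\psi = \cos\alpha\,\nabla_X\phi + \sin\alpha\,\omega\cdot\nabla_X\phi$. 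Using $\omega\cdot X\cdot = -X\cdot\omega\cdot$ and $\omega\cdot J(X)\cdot\phi$, the right-hand side becomes $(\text{combination of }X\cdot\psi\text{ and }J(X)\cdot\psi)$ with coefficients obtained from $(a,b)$ by a rotation by $\alpha$; choosing $\alpha = \theta$ (the constancy of $\theta$ being part of what the integrability argument above delivers) kills the $J(X)$-term and leaves $\nabla_X\psi = \lambda\,X\cdot\psi$, i.e.\ $\psi$ is a Killing spinor with constant $\lambda$. Inverting the rotation gives $\phi = \cos\alpha\,\psi - \sin\alpha\,\omega\cdot\psi$ — a sign bookkeeping issue that just amounts to replacing $\alpha$ by $-\alpha$ — and one checks $|\psi| = |\phi| = 1$ from \eqref{circle.action}. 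The formula $\lambda = \sqrt{a^2+b^2}$ drops out of this computation directly, which also handles the final sentence of the lemma.

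For \eqref{enum.kill} $\Rightarrow$ \eqref{enum.twist}, this is immediate: a Killing spinor is a twistor spinor (taking $a = \lambda$, $b = 0$ in \eqref{enum.ab}, or directly: $X\cdot\nabla_X\psi = \lambda X\cdot X\cdot\psi = -\lambda\psi$ is $X$-independent), and the twistor equation is preserved by the Clifford action of $\omega$ since $\omega$ is parallel and the twistor operator commutes with it up to sign — so $\cos\alpha\,\psi + \sin\alpha\,\omega\cdot\psi$ is again a twistor spinor. Alternatively invoke \cref{examp}: on a surface ($n\equiv 2\bmod 4$) the combination $a\psi_+ + b\psi_-$ of the two Killing spinors is a twistor spinor, and that is exactly the form in \eqref{enum.kill} after renaming.

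The main obstacle is the constancy of $a$ and $b$ (equivalently of $\theta$) in the step \eqref{enum.twist} $\Rightarrow$ \eqref{enum.ab}: the pointwise algebra gives $\nabla_X\phi = a(x)X\cdot\phi + b(x)J(X)\cdot\phi$ for free, but upgrading $a,b$ to constants requires either the integrability/curvature computation (differentiate once more, use the twistor-operator identities $\nabla_X D\phi = \tfrac{n}{2(n-2)}\big(\text{Ric-term}\big)$ specialised to $n=2$, and extract that $da = db = 0$) or a clean appeal to \cite{habermann:90}. I would lead with the latter for brevity, sketching the former in a sentence. The rest is routine Clifford algebra with the sign conventions $Je_1 = e_2$, $\omega = e_1\cdot e_2$, $J^* = -\star$ fixed in the General conventions paragraph, so I would state intermediate identities like $\omega\cdot X\cdot\phi = J(X)\cdot\phi$ (up to the sign dictated by those conventions) without belaboring them.
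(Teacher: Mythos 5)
Your overall route — cycling (i)\,$\Rightarrow$\,(ii)\,$\Rightarrow$\,(iii)\,$\Rightarrow$\,(i), extracting the pointwise identity $\nabla_X\phi = aX\cdot\phi + bJ(X)\cdot\phi$ from the $X$-independence of $X\cdot\nabla_X\phi$, appealing to a cited constancy result, and then conjugating by a rotation in $\operatorname{span}\{1,\omega\}$ to produce a Killing spinor — is the same as the paper's. (For the record, the paper extracts the coefficients as $a=\langle\nabla_{e_1}\phi,e_1\cdot\phi\rangle$, $b=\langle\nabla_{e_1}\phi,e_2\cdot\phi\rangle$ and for constancy invokes the twistor invariants $C_\phi=\langle D\phi,\phi\rangle$ and $Q_\phi$ from \cite[Theorem~4, Sec.~2.3]{bfgk91}, which in your notation are $-2a$ and $4b^2$; your appeal to \cite{habermann:90} or to an integrability computation is an acceptable alternative.)

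There are, however, two concrete slips. First, your identification of the $(A,\beta)$-pair of a twistor spinor is wrong: you claim $A=a\,\Id$ with nonzero $\beta$, but $J(X)\cdot\phi$ lies in $\operatorname{span}\{e_1\cdot\phi,e_2\cdot\phi\}$, not in the $\omega\cdot\phi$-line, so the decomposition \eqref{nabdec} of $\nabla_X\phi = aX\cdot\phi + bJ(X)\cdot\phi$ gives $A=a\,\Id+bJ$ and $\beta=0$ — exactly what the paper records in the sentence after the lemma. Your auxiliary identity is also off: since $\omega\cdot X=J(X)$ and vectors anticommute with $\omega$, one gets $J(X)\cdot\omega\cdot\phi = -\omega\cdot J(X)\cdot\phi = -\omega^2\cdot X\cdot\phi = +X\cdot\phi$, not $-J(X)\cdot\phi$. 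This does not sink the step since your final formula for (ii) is right, but the stated $(A,\beta)$ would mislead anyone using it downstream. Second, there is a genuine factor-of-two error in (ii)\,$\Rightarrow$\,(iii). Writing $a=\lambda\cos\theta$, $b=\lambda\sin\theta$ and $\psi=\cos\alpha\,\phi+\sin\alpha\,\omega\cdot\phi$, one finds
\begin{equation*}
\nabla_X\psi = \bigl[(a\cos\alpha-b\sin\alpha)+(b\cos\alpha+a\sin\alpha)\,\omega\bigr]X\cdot\phi,
\qquad
X\cdot\psi = (\cos\alpha-\sin\alpha\,\omega)X\cdot\phi,
\end{equation*}
and matching forces $\theta=-2\alpha$, not $\alpha=\theta$: rotating the spinor by $e^{\alpha\omega}$ rotates the coefficient pair $(a,b)$ by $2\alpha$, the usual spin double cover. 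This is why the paper parametrises $a=\lambda\cos(2\alpha)$, $b=\lambda\sin(2\alpha)$ and sets $\psi=\cos\alpha\,\phi-\sin\alpha\,\omega\cdot\phi$. With your choice $\alpha=\theta$ the $\omega$-term does not cancel and $\psi$ is not Killing.
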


\begin{remark}
Note that for~\eqref{enum.kill}, $\omega\cdot\psi$ is a Killing spinor with Killing constant $-\lambda$. 
\end{remark}

\wegwegenreferee{
\begin{proof}
Let $\phi$ be a twistor spinor of unit length. According to \cref{twistor} we have $e_1 \cdot \nabla_{e_1} \phi = e_2 \cdot \nabla_{e_2} \phi$ for a local orthonormal frame $\{e_1, e_2\}$. Hence
\ben
0 = \langle \nabla_{e_1} \phi, \phi \rangle = \langle e_1 \cdot \nabla_{e_1} \phi, e_1 \cdot \phi \rangle = \langle e_2 \cdot \nabla_{e_2} \phi, e_1 \cdot \phi \rangle = \langle \nabla_{e_2} \phi, \omega \cdot \phi \rangle
\ee
and 
\ben
0 = \langle \nabla_{e_2} \phi, \phi \rangle = \langle e_2 \cdot \nabla_{e_2}\phi, e_2 \cdot \phi \rangle = \langle e_1 \cdot \nabla_{e_1} \phi, e_2 \cdot \phi \rangle = - \langle \nabla_{e_1} \phi, \omega \cdot \phi \rangle. 
\ee
It follows that $\nabla_{e_1} \phi$ and $\nabla_{e_2} \phi$ are both orthogonal to $\phi$ and $\omega \cdot \phi$. Further,
\ben
\langle \nabla_{e_1} \phi, e_1 \cdot \phi \rangle = - \langle e_1 \cdot \nabla_{e_1} \phi, \phi \rangle = - \langle e_2 \cdot \nabla_{e_2} \phi, \phi \rangle = \langle \nabla_{e_2} \phi, e_2 \cdot \phi \rangle
\ee
and
\ben
\langle \nabla_{e_1} \phi, e_2 \cdot \phi \rangle = \langle e_1 \cdot \nabla_{e_1} \phi, e_1 \cdot e_2 \cdot \phi \rangle = \langle e_2 \cdot \nabla_{e_2} \phi, e_1 \cdot e_2 \cdot \phi \rangle = - \langle \nabla_{e_2} \phi, e_1 \cdot \phi \rangle.
\ee
Therefore, if we put $a=\langle\nabla_{e_1}\phi,e_1\cdot\phi\rangle$ and $b=\langle\nabla_{e_1}\phi,e_2\cdot \phi\rangle$, we get
\begin{equation}\label{ab_functions}
\nabla_X \phi = aX \cdot \phi + bJ(X)\cdot \phi
\end{equation}
for all $X \in \Gamma(TM)$. It remains to prove that $a$ and $b$ are constant. According to~\cite[Theorem 4 in Section 2.3]{bfgk91} for a twistor spinor $\phi$ the quantities
\ben
C_\phi := \langle D \phi, \phi \rangle \quad \text{and} \quad Q_\phi:= |D\phi|^2 - \langle D \phi, \phi)^2 - \sum_{i=1}^2 \langle D \phi, e_i \cdot \phi \rangle^2
\ee
are constant if the underlying manifold is connected. Since for a twistor spinor $D\phi=2e_1\cdot\nabla_{e_1}\phi=2e_2\cdot\nabla_{e_2}\phi$, \cref{ab_functions} gives $C_\phi = -2a$ and $Q_\phi = 4b^2$.

Next assume that \eqref{enum.ab} holds. We set $\lambda:= \sqrt{a^2+b^2}$ and choose $\al\in\mR$ such that $a=\lambda \cos (2\alpha)$, $b=\lambda \sin(2\alpha)$. For
\begin{equation}\label{psi.def}
\psi:=\cos\alpha\;\phi-\sin\alpha\;\omega\cdot\phi
\end{equation} 
an elementary calculation using $\om\cdot X=J(X)\cdot\phi$ yields $\nabla_X\psi=\lambda X\cdot\phi$. From \cref{psi.def} we deduce $\phi = \cos \al \;\psi + \sin \al \;\omega\cdot  \psi$.

Finally, \eqref{enum.kill} implies \eqref{enum.twist}. We compute directly that $X\cdot\nabla_X\phi=\lambda(-\cos\alpha+\sin\alpha\;\omega)\psi$ does not depend on the unit spinor $X$, hence \eqref{enum.twist} by virtue of \cref{twistor}
\end{proof}
}

In terms of the associated pair $(A,\beta)$ we have $A=a\Id+bJ$ and $\beta=0$ for a twistor spinor. Hence \cref{lem.twodim} together with Proposition \ref{curvature} immediately implies:

\begin{corollary}
Let $\phi$ be a $g$-twistor spinor of unit length. Then $g$ has non-negative constant Gau{\ss} curvature $K=4(a^2+b^2)$. In particular, $K=0$ if and only if $\phi$ is a parallel spinor.
\end{corollary}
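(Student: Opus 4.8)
The plan is to reduce everything to \cref{curvature}~\eqref{K.A.beta} once the shape of the pair $(A,\beta)$ associated to a twistor spinor is pinned down. By \cref{lem.twodim}, condition~\eqref{enum.ab} tells us that a unit twistor spinor satisfies $\nabla_X\phi = aX\cdot\phi + bJ(X)\cdot\phi$ for \emph{constants} $a,b\in\R$. Comparing with the defining equation~\eqref{nabdec}, $\nabla_X\phi = A(X)\cdot\phi + \beta(X)\,\omega\cdot\phi$, and using $\omega\cdot\phi = J(X)\cdot\phi$ when we Clifford-multiply appropriately (more precisely $J(X)\cdot\phi = (X\cdot\omega)\cdot\phi = -(\omega\cdot X)\cdot\phi$, so the term $bJ(X)\cdot\phi$ can be absorbed into $A$), one reads off $A = a\,\Id + bJ$ and $\beta = 0$, exactly as asserted in the line preceding the corollary.

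Next I would simply evaluate the two ingredients of \cref{curvature}~\eqref{K.A.beta}. Since $\beta = 0$ we have $\star d\beta = 0$, so $K = 4\det A$. In a positively oriented orthonormal frame $(e_1,e_2)$ the endomorphism $A = a\,\Id + bJ$ has matrix $\begin{pmatrix} a & -b\\ b & a\end{pmatrix}$, whence $\det A = a^2 + b^2$. Therefore $K = 4(a^2+b^2)$. Because $a$ and $b$ are constant by \cref{lem.twodim}, $K$ is a (non-negative) constant, which is the first claim.

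For the last sentence, observe that $K = 0$ forces $a^2 + b^2 = 0$, hence $a = b = 0$, so $\nabla_X\phi = aX\cdot\phi + bJ(X)\cdot\phi = 0$ for every $X$, i.e.\ $\phi$ is parallel. Conversely, if $\phi$ is parallel then $A = 0$ and $\beta = 0$, so $a = b = 0$ and $K = 4(a^2+b^2) = 0$. I do not anticipate any genuine obstacle here: the only point requiring care is the bookkeeping that translates the coefficients $(a,b)$ of \cref{lem.twodim}~\eqref{enum.ab} into the pair $(A,\beta)$ with the correct sign conventions for $J$ and $\omega$, but this is already recorded in the excerpt just before the statement, so the argument is essentially a one-line substitution into \cref{curvature}.
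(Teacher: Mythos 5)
Your proof is correct and follows the paper's own route: invoke \cref{lem.twodim} to conclude that a unit twistor spinor has associated pair $A=a\,\Id+bJ$ with $a,b$ constant and $\beta=0$, then substitute into \cref{curvature}~\eqref{K.A.beta} to obtain $K=4\det A-2\star d\beta=4(a^2+b^2)$, and read off the equivalence $K=0\Leftrightarrow a=b=0\Leftrightarrow\nabla\phi=0$. One tangential remark: the parenthetical Clifford computation has a sign slip ($J(X)=\omega\cdot X=-X\cdot\omega$, not $J(X)\cdot\phi=(X\cdot\omega)\cdot\phi$), but this is immaterial since all you really need is that $bJ(X)\cdot\phi$ is Clifford multiplication by the \emph{vector} $bJ(X)$ and therefore contributes to $A$ rather than $\beta$ in~\eqref{nabdec}.
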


\begin{remark}
The previous corollary is a special case of \cite[Theorem~1, p. 69]{fr90} and 
\cite[Theorem~3, p. 71]{fr90} where it was shown to hold in any dimension.
\end{remark} 
%
\subsection{Critical points on the sphere}
Next we completely describe the set of critical points on the sphere.

\begin{theorem}\label{min_sphere}
On $M_0=S^2$, the following statements are equivalent:
\begin{enumerate}[{\rm (i)}]
	\item\label{enum.crit} $(g,\phi)$ is a critical point of $\mc E$.
	\item\label{enum.min}  $\mc E(g,\phi)=\pi$, i.e.\ $(g,\phi)$ is an absolute minimiser.
	\item\label{enum.twistor}  $\phi$ is a twistor spinor, i.e.
\begin{equation}\label{ab2}
\nabla_X \phi = a X \cdot \phi + b J(X) \cdot \phi
\end{equation}
for constants $a,b \in \R$. 
	\item\label{enum.killing}  There is a unit-length Killing spinor $\psi$ on $(S^2,g)$ and $\al\in\mR$
such that 
\begin{equation}\label{sum.kill}
\phi=\cos\al\;\psi+\sin\al\;\omega\cdot\psi
\end{equation}
\end{enumerate}
Moreover, any of these conditions implies that the Gau{\ss} curvature of $g$ is a positive constant.
\end{theorem}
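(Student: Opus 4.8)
The plan is to prove the chain of equivalences $\eqref{enum.crit}\Rightarrow\eqref{enum.min}\Rightarrow\eqref{enum.twistor}\Rightarrow\eqref{enum.killing}\Rightarrow\eqref{enum.crit}$, after which the final assertion about positive constant Gau\ss{} curvature follows from the corollary preceding the theorem (a unit twistor spinor forces $K=4(a^2+b^2)$ constant, and $K>0$ because $K\equiv0$ would make $\phi$ parallel, which is impossible on the simply connected $S^2$ since a parallel spinor trivialises a bundle whose characteristic class is non-trivial — alternatively, Gau\ss{}--Bonnet forbids a flat metric on $S^2$). The implications $\eqref{enum.twistor}\Leftrightarrow\eqref{enum.killing}$ are exactly \cref{lem.twodim}, and $\eqref{enum.min}\Rightarrow\eqref{enum.twistor}$ is covered by the remark following \cref{inf.E} together with \cref{lem.twodim}; so the genuine content is $\eqref{enum.crit}\Rightarrow\eqref{enum.min}$, i.e.\ that on the sphere \emph{every} critical point is an absolute minimiser, plus the easy $\eqref{enum.killing}\Rightarrow\eqref{enum.crit}$.

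For $\eqref{enum.killing}\Rightarrow\eqref{enum.crit}$ I would compute the pair $(A,\beta)$ associated with $\phi=\cos\al\,\psi+\sin\al\,\omega\cdot\psi$ via \cref{lem.twodim}: one gets $A=a\,\Id+b\,J$ and $\beta=0$ with $a,b$ constant. Then $\div\beta=0$ and $\div A=0$ trivially (constant coefficients, $\nabla J=0$), and the third critical-point equation of \cref{beta-is-harmonic} reduces to $0=2(A^tA)_0$; but $A^tA=(a^2+b^2)\Id$ is already pure trace, so its traceless part vanishes. Hence \cref{critpointchar} holds and $(g,\phi)$ is critical.

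The main work, and the step I expect to be the obstacle, is $\eqref{enum.crit}\Rightarrow\eqref{enum.min}$. Here I would argue as follows. Suppose $(g,\phi)$ is critical on $S^2$. By \cref{beta-is-harmonic}\eqref{beta.closed}, $\beta$ is a harmonic $1$-form, hence $\beta=0$ since $H^1(S^2,\R)=0$. Now invoke \cref{tt.tensor}: wait — that proposition is stated for $\gamma\ge1$, so instead I apply its proof strategy directly. With $\beta=0$ the critical equation gives $2A^tA=|A|^2\Id$, so $A^tA$ is pure trace; writing $A$ in an orthonormal frame and using that $A^tA$ is pure trace forces the traceless symmetric part $A^{0,1}$ and the trace-free skew part to interact so that, by \cref{cor_gauss}\eqref{K.detA}, $K=4\det A$ and $2|A|^2=2|\det A|\cdot(\text{sign})$... more cleanly: $\beta=0$ and \cref{diracpair} give $D\phi=(\tr A)\phi+\tr(A\circ J)\,\omega\cdot\phi$, and from $2A^tA=|A|^2\Id$ one deduces $A$ is a multiple of an orthogonal matrix pointwise, so either $A$ is (conformal) $a\,\Id+b\,J$ type or anti-conformal type; in the first case $|D\phi|^2=(\tr A)^2+\tr(A\circ J)^2=4(a^2+b^2)=4\det A=K\ge0$ and then by \cref{cor_gauss} the computation in the proof of \cref{tt.tensor} shows $|D\phi|^2=|A|^2+K/2$... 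I must be careful: the honest route is to run the argument in the proof of \cref{tt.tensor} verbatim, noting $\beta=0$, to conclude $U=\{K<0\}$ is either empty or dense; on $S^2$ density is impossible by \cref{cor_gauss}\eqref{K.detA} giving $K=4\det A$ and Gau\ss{}--Bonnet $\int_{S^2}K=4\pi>0$, while emptiness forces $K\ge0$, and then on the (dense, hence all of $S^2$ by continuity) set $K>0$ one has $\det A>0$, so $A^tA$ pure trace plus $\det A>0$ forces $A$ conformal, i.e.\ $A=a\Id+bJ$ — but $a,b$ need not yet be constant. To upgrade to constancy, I use $\div A=0$: by \cref{div.free}, $\div A^{1,0}=0$ means $\bar\partial f=0$ where $f=a+ib$, so $f$ is a global holomorphic function on $S^2$, hence constant. (And $A^{0,1}=0$ already.) Thus $A=a\Id+bJ$ with $a,b$ constant, $D\phi=2a\phi+2b\,\omega\cdot\phi$ has $|D\phi|^2=4(a^2+b^2)=K$... and then $D\phi=0$ would need $a=b=0$. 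That is \emph{not} what we want — we want an absolute minimiser meaning $D\phi=0$? No: for $\gamma=0$, $\inf\mc E=\pi$ and by the remark after \cref{inf.E} equality holds iff $\phi$ is a twistor spinor, not iff $D\phi=0$. So once $A=a\Id+bJ$ with constants, $\phi$ \emph{is} a twistor spinor by \cref{lem.twodim}\eqref{enum.ab}, and then $\mc E(g,\phi)=\tfrac12\int|\nabla\phi|^2=\tfrac12\int|A|^2=\tfrac12\int 2(a^2+b^2)=\tfrac18\int K=\pi$ by Gau\ss{}--Bonnet; hence $(g,\phi)$ is an absolute minimiser. The one gap to nail down rigorously is the dichotomy "$U$ empty or dense and emptiness $\Rightarrow K\ge0$" on $S^2$: here the non-compact-Riemann-surface maximum-principle argument from the proof of \cref{tt.tensor} applies unchanged once we know $\beta=0$, and I would simply cite it.
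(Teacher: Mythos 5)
Your cycle $(\ref{enum.crit})\Rightarrow(\ref{enum.min})\Rightarrow(\ref{enum.twistor})\Leftrightarrow(\ref{enum.killing})\Rightarrow(\ref{enum.crit})$ is organised differently from the paper's. The paper proves $(\ref{enum.crit})\Rightarrow(\ref{enum.min})$ by the short computation $2|A|^2=K$ (after the empty-or-dense dichotomy) and Gau\ss{}--Bonnet, and then deduces $(\ref{enum.min})\Rightarrow(\ref{enum.twistor})$ from the equality case of the pointwise Cauchy--Schwarz inequality $|D\phi|^2\le 2|\nabla\phi|^2$, which forces $e_1\cdot\nabla_{e_1}\phi=e_2\cdot\nabla_{e_2}\phi$ and hence the twistor equation via \cref{twistor}. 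You instead aim straight at $(\ref{enum.crit})\Rightarrow(\ref{enum.twistor})$ by extracting the full structure of $A$ (conformal with globally constant coefficients via $\div A=0$ and \cref{div.free}), after which $(\ref{enum.min})$ drops out of Gau\ss{}--Bonnet. Both routes are valid; yours is more structural and bypasses the Cauchy--Schwarz equality analysis, but costs more.

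The genuine gap is the assertion that $\{K>0\}$ is dense ``hence all of $S^2$ by continuity''. After the dichotomy argument you only know $K\ge 0$; nothing yet excludes $\{K=0\}$ having interior, so density of $\{K>0\}$ is unjustified, and you never actually get $A^{0,1}=0$ globally this way. The fix is purely pointwise and needs no density at all: wherever $K=0$ one has $\det A=0$, which together with $A^tA=\tfrac12|A|^2\,\Id$ forces $A=0$ (a matrix with $A^tA$ pure trace and vanishing determinant must vanish), so $A^{0,1}=0$ there; wherever $K>0$ one has $\det A>0$ and $A$ is a positive multiple of a rotation, so again $A^{0,1}=0$. Thus $A^{0,1}\equiv 0$, whence $\div A^{1,0}=\div A=0$ and $\bar\partial f=0$ as you wanted.

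Two smaller points. First, your appeal to the remark after \cref{inf.E} for $(\ref{enum.min})\Rightarrow(\ref{enum.twistor})$ is circular, since that remark merely forward-references \cref{sphere}; luckily your argument doesn't need it, as $(\ref{enum.min})\Rightarrow(\ref{enum.crit})$ is trivial (a minimiser is critical) and you prove $(\ref{enum.crit})\Rightarrow(\ref{enum.twistor})$ directly. Second, you mis-identified your own gap: the empty-or-dense dichotomy for $\{K<0\}$ from \cref{tt.tensor}'s proof does transfer verbatim to $S^2$ (the trivialisation on $S^2\setminus\{p\}\cong\C$ and the maximum principle work just fine), so that step is sound; the problem was the density of $\{K>0\}$ afterwards.
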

\begin{proof}
Assume $(g,\phi)$ is a critical point. Since $H^1(S^2,\R)=0$, \cref{critical} and \cref{beta-is-harmonic} imply
\ben
\beta=0, \quad A^tA = \tfrac 12 |A|^2 \Id, \quad \div A=0,
\ee
whence $2|A|^2=|K|$. Since the set of points where $K<0$ cannot be dense on $S^2$ by Gau\ss{}-Bonnet, it must be empty (cf.\ the proof of \cref{tt.tensor}). In particular, $2|A|^2=K$. Since $|\nabla \phi|^2 = |A|^2$, Gau\ss{}-Bonnet again implies 
\ben
\mc E(g,\phi) = \tfrac 12 \int_M |\nabla \phi|^2 =\tfrac 12 \int_M |A|^2 = \tfrac 14 \int_M K = \tfrac 14 \cdot 4\pi = \pi
\ee 
Conversely, this implies that $(g,\phi)$ is critical by \cref{inf.E}.

Next assume that \eqref{enum.min} holds. The equality $2\pi = \int_M |\nabla \phi|^2$ gives the pointwise equality $|D\phi|^2 = 2 |\nabla \phi|^2$, cf.\ \eqref{dirnab} and \eqref{dirnab2}. On the other hand, equality in \eqref{dirnab} arises if and only if $e_1 \cdot \nabla_{e_1}\phi = e_2 \cdot \nabla_{e_2}\phi$. Multiplying with $\omega=e_1 \cdot e_2$ from the left yields the equation $e_1\cdot\na_{e_2}\phi=-e_2\cdot\na_{e_1}\phi$. Hence for $X=ae_1+be_2$ with $a^2+b^2=1$ we obtain 
\begin{align*}
X \cdot \nabla_X \varphi &= a^2 e_1\cdot\nabla_{e_1} \varphi + b^2 e_2 \cdot\nabla_{e_2} \varphi + ab (e_1 \cdot \nabla_{e_2}\varphi + e_2 \cdot \nabla_{e_2}e_1)\\
&=e_1 \cdot \nabla_{e_1} \varphi = e_2 \cdot \nabla_{e_2} \varphi.
\end{align*}
According to \cref{twistor}, $\phi$ is a twistor spinor .

The equivalence between \eqref{enum.twistor} and \eqref{enum.killing} follows directly from \cref{lem.twodim}.

Finally, \cref{sum.kill} states that $\phi$ is in the $S^1$-orbit of a Killing spinor which is clearly a critical point - its associated pair is $A=\lambda\id$ and $\beta=0$. Hence \eqref{enum.min} follows. 
\end{proof}

\begin{corollary}\label{critsphere}
Up to rescaling there is exactly one $\U(2)=S^1\times_{\Z_2}\SU(2)$ orbit of critical points on $S^2$.
\end{corollary}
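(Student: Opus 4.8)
The plan is to reduce, via \cref{min_sphere} and \cref{lem.twodim}, to the case of the round metric, and then read off the orbit structure from the quaternionic geometry of the space of Killing spinors on $S^2$.

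First I would invoke \cref{min_sphere}: a critical point $(g,\phi)$ on $S^2$ has $g$ of constant positive Gau{\ss} curvature and $\phi$ a twistor spinor. By uniformisation, after a homothety $g$ becomes isometric to the standard round metric $g_0$ of curvature $1$ via an orientation-preserving diffeomorphism; pulling back along a spin lift of this diffeomorphism, under which $\mc E$ and the $\U(2)$-action of \cite{amwewi12} are natural, we may assume $g=g_0$. It thus suffices to show that the critical points with fixed metric $g_0$ form a single $\U(2)$-orbit.

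Next I would analyse the critical points with metric $g_0$. By \cref{lem.twodim} \eqref{enum.kill} each such $\phi$ equals $\cos\alpha\,\psi+\sin\alpha\,\omega\cdot\psi$ for some $\alpha\in\R$ and some unit Killing spinor $\psi$; the curvature normalisation forces the Killing constant to be $\pm\tfrac12$ (recall from the corollary following \cref{lem.twodim} that $K=4(a^2+b^2)$), and since $\omega\cdot\psi$ is Killing with the opposite constant we may take it to be $+\tfrac12$. Let $\mathcal K$ denote the space of Killing spinors on $(S^2,g_0)$ with constant $+\tfrac12$, so that $\dim_\C\mathcal K=2$. Because the quaternionic structure on $\Sigma_{g_0}$ is parallel and commutes with Clifford multiplication by real vectors (left versus right multiplication in the quaternionic line-bundle picture of \cref{prelim.spinor.surface}), $\mathcal K$ is a quaternionic subspace; its unit sphere is a copy of $\Sp(1)$ on which the $\SU(2)=\Sp(1)$-factor of the $\U(2)$-action acts simply transitively, right quaternionic multiplication preserving the Killing equation since it commutes with $\nabla$.

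Finally I would fix one unit Killing spinor $\psi_0\in\mathcal K$ and compute its $\U(2)$-orbit: the $\SU(2)$-factor sends $(g_0,\psi_0)$ to all $(g_0,\psi)$ with $\psi$ a unit element of $\mathcal K$, and the circle factor, which by \eqref{circle.action} acts as $\psi\mapsto\cos\alpha\,\psi+\sin\alpha\,\omega\cdot\psi$, then sweeps out exactly all pairs $\bigl(g_0,\cos\alpha\,\psi+\sin\alpha\,\omega\cdot\psi\bigr)$. By the previous paragraph these are precisely the critical points with metric $g_0$, so they form the single orbit $\U(2)\cdot(g_0,\psi_0)$; combined with the first reduction this proves the corollary. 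The main obstacle is bookkeeping rather than analysis: one must carefully match the abstract $\U(2)=S^1\times_{\Z_2}\SU(2)$-action from \cite{amwewi12} with the concrete operations ``$\psi\mapsto\cos\alpha\,\psi+\sin\alpha\,\omega\cdot\psi$'' and ``right multiplication by unit quaternions'', and verify that the latter preserves the Killing property for a fixed constant; once this and the standard fact $\dim_\C\mathcal K=2$ are in place, the orbit computation is immediate.
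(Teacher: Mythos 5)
The paper states this corollary without proof, as an immediate consequence of \cref{min_sphere}, so there is no ``paper proof'' to match line-by-line; your argument supplies exactly the details that are implicit, and it is correct. The essential steps — rescale so $K\equiv 1$, use a spin diffeomorphism to reduce to the standard round metric (this diffeomorphism normalisation is silently built into the statement of the corollary, and you are right to make it explicit), invoke \cref{lem.twodim}\,\eqref{enum.kill} to write every critical $\phi$ as $\cos\alpha\,\psi+\sin\alpha\,\omega\cdot\psi$ with $\psi$ a unit Killing spinor of constant $+\tfrac12$, observe that the constant-$\tfrac12$ Killing spinors form a quaternionic line on which right multiplication by $\Sp(1)=\SU(2)$ acts simply transitively on unit vectors (because the quaternionic structure is parallel and commutes with left Clifford multiplication), and let the circle factor $e^{\alpha\omega}$ from \eqref{circle.action} sweep out the twistor parameter — are all sound. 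Minor remarks: uniformisation is more than you need, since any two metrics of constant curvature~$1$ on $S^2$ are isometric by the Killing--Hopf theorem; and $\dim_\C\mathcal K=2$ can be read off from \cref{examp} together with the obvious bound $\dim_\C\mathcal K\le\dim_\C\Delta=2$ and the quaternionic invariance of $\mathcal K$. Neither remark affects the validity of the argument.
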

%
%
%
\section{Critical points on the torus}\label{torus}
%
\subsection{Spin structures on tori}\label{spin.stru.tori}
Finally we investigate the genus $1$ case, that is we consider a torus $T^2_\Gamma=\R^2/\Gamma$ for a given lattice $\Gamma\subset\R^2$. Here, we have four inequivalent spin structures, three of which are bounding. In the case of a flat metric these can be described uniformly through homomorphisms $\chi:\Gamma\to\Z_2=\{-1,1\}=\ker\theta\subset\Spin(2)$ giving rise to an associated bundle $P_\chi:=\R^2\times_\zeta\Spin(2)$. Here, $\theta$ is the connected double covering $\Spin(2)\cong S^1\to \SO(2)\cong S^1$. The quotient map $\R^2\to T^2_\Gamma$ and the covering $\theta$ induce a map $\eta_\chi:P_\chi\to P_{\SO(2)}(T^2_\Gamma)$ which defines a spin structure. In fact, there is a bijection between $\Hom(\Gamma,\Z_2)\cong H^1(T^2_\Gamma;\Z_2)$ and isomorphism classes of spin structures on $T^2_\Gamma$ such that the non-bounding spin structure corresponds to the trivial homomorphism $\chi\equiv1$ (see~\cite{fr84} or \cite[Section 2.5.1]{ba81} for further details). For example, the non-bounding spin structure is the trivial spin structure given by $\id\times\theta:T^2\times\Spin(2)\to T^2\times\SO(2)$. Its associated spinor bundle is trivialised by parallel sections in contrast to the spinor bundles associated with the three bounding spin structures which do not admit non-trivial parallel spinors~\cite{hi74}. (Note that for flat metrics a parallel spinor is the same as a harmonic spinor in virtue of the Schr\"odinger-Lichnerowicz formula.) For an example of a bounding spin structure, consider the Clifford torus inside $S^3$. If we equip the resulting solid torus with the spin structure induced from its ambient $S^3$, then the induced spin structure on its boundary, i.e.\ the Clifford torus, is a bounding spin structure.
%
\subsection{Non-minimising critical points on tori}\label{nonmin.crit.pts}
We are going to show that on certain flat tori, critical points which are not absolute minimisers do exist. Examples, which are in fact saddle points, are provided by the following construction.

\medskip
 
We begin with two parallel unit spinors $\psi_1$ and $\psi_2$ on the Euclidean space $(\R^2,g_0)$ satisfying $\psi_1\perp\psi_2$ and $\psi_1\perp\omega\cdot\psi_2$. Then an orthonormal basis of the spinor module $\Delta$ is given by
$\{\psi_1,\omega \cdot \psi_1, \psi_2, \omega \cdot \psi_2 \}$. Thinking of $\omega$ as an imaginary unit, we set
$$
e^{t\omega}:=\cos(t)+\sin(t)\omega
$$ 
for $t\in\R$. In particular, the usual formul{\ae} such as $e^{(s+t)\omega}=e^{s\omega}e^{t\omega}$ or $\nabla e^{t\omega}=\omega e^{t\omega}$ hold. Furthermore, let $\alpha_1,\alpha_2 \in \R^{2*}$. For $\theta\in\R$ consider the unit spinor
\beq\label{crit.phi.ansatz}
\varphi(x) = \cos (\theta) e^{\alpha_1(x)\omega}\psi_1 + \sin(\theta) e^{\alpha_2(x)\omega}\psi_2
\ee
for which
\beq\label{cov_deriv}
\nabla_{(\cdot)}\varphi(x)=\cos(\theta)\alpha_1(\cdot)(x)\otimes e^{\alpha_1(x)\omega}\omega\cdot\psi_1 + \sin(\theta)\alpha_2(\cdot)(x)\otimes e^{\alpha_2(x)\omega}\omega\cdot\psi_2.
\ee
As both $\{e_1\cdot \psi_1,e_2\cdot \psi_1\}$ and $\{\psi_2,\omega\cdot\psi_2\}$ span the space orthogonal to $\psi_1$ and $\omega\cdot \psi_1$, there is a unit vector field $V$ such that $\psi_2=V\cdot \psi_1$. Parallelity of $\psi_1$ and $\psi_2$ imply parallelity of $V$. The pair $(A,\beta)$ corresponding to $\phi$ in the decomposition~\eqref{nabdec} is given by the $(1,1)$-tensor
\beq\label{A.crit.point}
A_x=\cos(\theta)\sin(\theta)(\alpha_2-\alpha_1)\otimes e^{(\alpha_1(x)+\alpha_2(x)+\pi/2)\omega}V
\ee
and the {\em parallel} $1$-form
\beq\label{beta.crit.point}
\beta = \cos^2 (\theta) \alpha_1 + \sin^2(\theta) \alpha_2.
\ee
In particular, we find $\det A=0$ in accordance with \cref{curvature} \eqref{K.A.beta}. Indeed, $\omega\cdot V=-V\cdot\omega$ and $Ve^{t\omega}=e^{-t\omega}V$ for $t\in\R$ so that
\ben
\varphi(x)=\bigl(\cos(\theta)e^{\alpha_1(x)\omega}+\sin(\theta)e^{\alpha_2(x)\omega}V\bigr)\psi_1
\ee
and
\beq\label{nabla.varphi}
\nabla_X\varphi(x)=\bigl(\cos(\theta)\alpha_1(X)e^{\alpha_1(x)\omega}-\sin(\theta) \alpha_2(X)e^{\alpha_2(x)\omega}V\bigr)\omega\cdot\psi_1.
\ee
On the other hand,
\ben
\bigl(\cos(\theta)e^{-\alpha_1(x)\omega} - \sin(\theta)e^{\alpha_2(x)\omega} V\bigr)\bigl(\cos(\theta)e^{\alpha_1(x)\omega} + \sin(\theta) e^{\alpha_2(x)\omega}V\bigr)=1,
\ee
and therefore
\ben
\psi_1 = \bigl(\cos(\theta) e^{-\alpha_1(x)\omega} - \sin(\theta) e^{\alpha_2(x)\omega}V \bigr)\varphi.
\ee
After substitution into~\eqref{nabla.varphi} this gives
\begin{align*}
\nabla_X\varphi = &\bigl( \cos^2(\theta) \alpha_1(X)+\sin^2(\theta)\alpha_2(X)\\ 
&+\cos(\theta)\sin(\theta)\big(\alpha_1(X)-\alpha_2(X)\big)e^{(\alpha_1(x)+\alpha_2(x))\omega}V\bigr)\omega\cdot\varphi\\
=&\cos(\theta)\sin(\theta)\big(\alpha_2(X)-\alpha_1(X)\big) e^{(\alpha_1(x)+\alpha_2(x)+\pi/2)\omega}V\cdot\varphi\\
&+\big(\cos^2(\theta) \alpha_1(X) + \sin^2(\theta) \alpha_2(X)\big)\omega\cdot\varphi.
\end{align*}

\medskip

Next we compute the negative gradient of $\mc E$ in $(g,\phi)$. This is most easily done by considering the identities in~\eqref{neggrad} from which $4Q_1(g,\varphi)=-|\nabla \varphi|^2g-\div T_{g,\varphi}+2\langle \nabla \varphi \otimes \nabla \varphi \rangle$. Using~\eqref{cov_deriv} we compute
\ben
\langle\nabla\varphi\otimes\nabla\varphi\rangle=\cos^2(\theta)\alpha_1\otimes\alpha_1+\sin^2(\theta)\alpha_2
\otimes\alpha_2.
\ee
Since 
$$
|\nabla \varphi|^2 = \tr \langle \nabla \varphi \otimes \nabla \varphi \rangle= \cos^2(\theta) |\alpha_1|^2 + \sin^2(\theta) |\alpha_2|^2
$$ 
we obtain
\ben
\tfrac 12 \langle \nabla \varphi \otimes \nabla \varphi \rangle- \tfrac 14 |\nabla \varphi|^2g = \tfrac 12 \langle \nabla \varphi, \nabla \varphi \rangle_0 = \tfrac{1}{2}\cos^2(\theta) (\alpha_1 \otimes \alpha_1)_0 + \tfrac{1}{2}\sin^2(\theta)(\alpha_2 \otimes \alpha_2)_0.
\ee
Finally, if $\{e_1,e_2\}$ is the standard basis of $\R^2$, then as in~\eqref{div.T.comp}
\ben
\div T_{g,\varphi}(e_1,e_1) = e_2( \omega \varphi, \nabla_{e_1} \varphi \rangle = e_2(\beta(e_1)) = 0
\ee
since $\beta$ is parallel. Next $Q_2(g,\varphi)=- \nabla^*\nabla \varphi + |\nabla \varphi|^2 \varphi$ by~\eqref{neggrad}. Again using \cref{cov_deriv} we compute
\ben
\nabla^*\nabla \varphi=|\alpha_1|^2\cos(\theta)e^{\alpha_1(x)\omega}\psi_1+|\alpha_2|^2 \sin(\theta)e^{\alpha_2(x)\omega}\psi_2.
\ee
Altogether we get for the spinor $\phi$ defined by~\eqref{crit.phi.ansatz} that
\begin{align*}
Q_1(g,\varphi) =& \tfrac{1}{2}\cos^2(\theta) (\alpha_1\otimes\alpha_1)_0 + \tfrac{1}{2}\sin^2(\theta) (\alpha_2 \otimes \alpha_2)_0,\\
Q_2(g,\varphi) =&-|\alpha_1|^2 \cos(\theta) e^{\alpha_1(x)\omega} \psi_1 - |\alpha_2|^2 \sin(\theta) e^{\alpha_2(x)\omega}\psi_2\\
&+\big( \cos^2(\theta)  |\alpha_1|^2 + \sin^2(\theta) |\alpha_2|^2\big)\varphi.
\end{align*}
For a critical point we need $Q_1$ and $Q_2$ to vanish. Now $Q_1(g,\varphi)$ vanishes if and only if $\cos^2(\theta) \alpha_1 \otimes \alpha_1 + \sin^2(\theta) \alpha_2 \otimes \alpha_2$ is a constant multiple of the Euclidean metric~$g$. This in turn is the case if and only if $\alpha_1 \perp \alpha_2$ and $|\cos (\theta)|\, |\alpha_1| = |\sin(\theta)|\, |\alpha_2|$. Furthermore, $Q_2(g,\varphi) = 0$ if and only if $\nabla^*\nabla\varphi = f \varphi$ for some function $f:\R^2\to\R$, i.e.\ if
\begin{align*}
|\alpha_1|^2 \cos(\theta)e^{\alpha_1(x)\omega}\psi_1 &+|\alpha_2|^2 \sin(\theta) e^{\alpha_2(x)\omega}\psi_2\\
=&f(x)\big(\cos(\theta)e^{\alpha_1(x)\omega}\psi_1+\sin(\theta)e^{\alpha_2(x)\omega}\psi_2\big).
\end{align*}
Again this holds if and only if $|\alpha_1|^2=f(x)=|\alpha_2|^2$. 

\medskip

Summarising, the spinor $\varphi$ in~\eqref{crit.phi.ansatz} is a critical point if and only if 
\begin{equation}\label{conditions}
\alpha_1\perp\alpha_2,\quad|\alpha_1|=|\alpha_2|,\quad(\theta - \pi/4)\in (\pi/2)\Z 
\end{equation}
are satisfied. When does then $\varphi$ descend to a well-defined spinor on a torus? For $\ell:=\pi/|\alpha_1|$ consider first the square torus $T_\ell:=\R^2/\Gamma_\ell$ whose lattice is spanned by
\begin{equation}\label{lattice}
\gamma_1=\ell\begin{pmatrix}1\\1\end{pmatrix} \quad \text{and} \quad \gamma_2=\ell\begin{pmatrix}1\\-1\end{pmatrix}.
\end{equation}
Possibly after an additional rotation we may assume without loss of generality that $\alpha_i=|\alpha_1|e^i$ for the standard basis $(e^1,e^2)$ of $\R^{2*}$. If $\sigma_\chi$ is the (necessarily bounding) spin structure defined by the group morphism $\chi_\ell:\Gamma_\ell\to\Z_2$, $\chi_\ell(\gamma_1)=\chi_\ell(\gamma_2)=-1$, then 
\begin{equation}\label{compat} 
e^{\alpha_1(\gamma)\omega}= e^{\alpha_2(\gamma)\omega}=\chi_\ell(\gamma)
\end{equation}
so that $\varphi$ descends to $(T^2_\ell,\sigma_\ell)$ and gives rise to a critical point there. More generally, $\varphi$ descends to any covering $T_\Gamma=\R^2/\Gamma$ of $T_\ell$, where the spin structure on $T_\Gamma$ is induced by $\chi=\chi_\ell|_\Gamma$. For instance, the double covering $T_{2\ell}\to T^2_\ell$ yields a square torus for which $\varphi$ descends to a spinor with respect to the non-bounding spin structure defined by $\chi\equiv1$. Conversely, any torus~$T_\Gamma$ to which $\varphi$ descends is necessarily a covering of $T^2_\ell$. Indeed, assume~\eqref{compat} holds for the spin structure $\sigma_\chi$ on $T^2_\Gamma$ instead of $\sigma_\ell$ on $T^2_\ell$, and let $\Gamma_0=\ker\chi$. In particular, $\Gamma_0\subset 2\ell\Z^2$. If $\sigma_\chi$ is the non-bounding structure, then $\chi\equiv1$ and therefore $\Gamma_0=\Gamma$. Otherwise, there exists $\gamma_0\in\Gamma$ with $\chi(\gamma_0)=-1$ so that~\eqref{compat} implies
$$
\gamma_0-\frac\ell2 \begin{pmatrix}1\\1\end{pmatrix}\in \ell\mZ^2.
$$
In particular, $\Gamma$ is contained in $\Gamma_\ell$. 

\begin{remark}\label{A.beta.crit.point}
From \cref{A.crit.point} and \cref{beta.crit.point} it follows immediately that for a flat critical point on the torus, $\beta^\sharp\in\ker A$. Conversely, any critical point satisfying this condition is necessarily flat, cf.\  \cref{class.crit.point}.
\end{remark}

In conclusion we established the existence of critical points $(g_0,\varphi_0)$ on any torus $T_\Gamma$ covering $T_\ell$. Its spin structure is determined by the restriction of $\chi_\ell$ to $\Gamma$. Finally, we will show the existence of saddle points. First of all, if $\varphi$ satisfies \eqref{compat}, then
$$
\mc E(g,\varphi)=\frac{\cos^2(\theta)|\alpha_1|^2+\sin^2(\theta) |\alpha_2|^2}{2} \operatorname{area}(T^2_\ell).
$$
Now $\operatorname{area}(T^2_\ell)=2\ell^2$, and if $(g_0,\varphi_0)$ is critical, $\cos^2(\theta_0)=\sin^2(\theta_0)=\frac 12$ and $|\alpha_1|=|\alpha_2|=\pi/\ell$, whence $\mc E(g_0,\varphi_0)=\pi^2$. Next we construct special curves $(g_t,\varphi_t)$ through $(g_0,\varphi_0)$. The metric $g_t$ is obtained through an area-preserving deformation of $T^2_\ell$ by taking the lattice $\Gamma_t$ spanned by
$$
\gamma_1(t)=\frac{\ell}{2}\begin{pmatrix}1\\1\end{pmatrix}(1+t) \quad \text{and} \quad \gamma_2(t)=\frac{\ell}{2}\begin{pmatrix}1\\-1\end{pmatrix}\frac{1}{(1+t)}. 
$$
The spinor will be modified through $\theta=\theta(t)=ct+\theta_0$. Then $\mc E(g_t,\varphi_t)=f(t)\pi^2$
for 
$$
f(t) = \cos^2 (\theta(t)) \frac{1}{(1+t)^2}  + \sin^2 (\theta(t)) (1+t)^2.
$$
Since $f''(0)=8\theta'(0)+4$, the second derivative takes any real value by suitably choosing the slope $c$ in $\theta(t)$. The non-minimising critical point on $T^2_0$ is therefore a saddle point, and so are the critical points obtained by taking covers. 
%
\subsection{Classification of flat critical points on the torus}
We are now in a position to classify the flat critical points on the torus. Recall the decomposition $A=A^{1,0} + A^{0,1}$, where $A^{1,0}$ is the trace and skew-symmetric part of $A$, while $A^{0,1}$ is the symmetric part of $A$. If $A$ is associated with a critical point, these components correspond to a holomorphic function $f$ and a quadratic differential $q$, cf.\ \cref{div.free}. From the coordinate description of~\eqref{deco.endo} one easily verifies the identities 
$$
\det A=\det A^{1,0}+\det A^{0,1}
$$
and
\begin{align*}
(A^{1,0})^tA^{1,0}&=\det A^{1,0}\cdot\id, &
(A^{0,1})^2 &=-\det A^{0,1}\cdot\id,\\
A^{0,1}A^{1,0}&= (A^{1,0})^t A^{0,1}, & A^{1,0}A^{0,1} &= A^{0,1} (A^{1,0})^t.
\end{align*}
In particular, these identities imply
\ben
(A^tA)_0 = 2(A^{1,0})^tA^{0,1} = 2 A^{0,1}A^{1,0},
\ee
so that $(A^tA)_0$ corresponds to the quadratic differential $2fq$.

\begin{theorem}\label{classification}
A flat critical point on the torus is either an absolute minimiser, i.e.\ a parallel spinor, or a non-minimising critical point as in \cref{nonmin.crit.pts}.
\end{theorem}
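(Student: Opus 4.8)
The plan is to start from a flat critical point $(g,\varphi)$ on a torus $T_\Gamma$ and show it must be one of the two types. By Corollary~\ref{beta-is-harmonic} and the flatness of $g$, the $1$-form $\beta$ is harmonic and hence parallel (by the Weitzenb\"ock formula~\eqref{weitzenboeck}, since $K\equiv0$). Using the decomposition $A = A^{1,0} + A^{0,1}$ with $A^{1,0}\leftrightarrow f$ and $A^{0,1}\leftrightarrow q$, the critical point equations $\div A = 0$ translate via Lemma~\ref{div.free} into $\bar\partial f = 0$ and $\bar\partial q = 0$; since $T_\Gamma$ is flat, $f$ is a holomorphic function on a compact Riemann surface and therefore a constant, and $q$ is a holomorphic quadratic differential on the torus, hence (as $\kappa_\gamma^2$ is trivial for $\gamma=1$) also a constant coefficient. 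So $A$ is determined by a constant $f\in\C$ and a constant-coefficient traceless symmetric $q$, and $\beta$ is parallel.

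Next I would exploit the remaining critical point equation, namely the traceless part of $Q_1$: using the algebraic identities recorded just before the theorem, $(A^tA)_0 = 2A^{0,1}A^{1,0}$ corresponds to the quadratic differential $2fq$, and $(\nabla_{J(\cdot)}\beta)^{\mathrm{sym}}$ vanishes because $\beta$ is parallel. Thus the equation $(\nabla_{J(\cdot)}\beta)^{\mathrm{sym}} = 2(A^tA + \beta\otimes\beta)_0$ becomes $2fq + 2(\beta\otimes\beta)_0 = 0$, i.e.\ the quadratic differential $fq$ equals $-(\beta\otimes\beta)_0$ as a constant-coefficient symmetric tensor. Together with Corollary~\ref{cor_gauss}\eqref{K.detA}, $0 = K = 4\det A = 4(\det A^{1,0} + \det A^{0,1}) = 4(|f|^2 - |q|^2)$ so $|f| = |q|$ (with the obvious normalization of norms). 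I would then split into cases according to whether $f = 0$.

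If $f = 0$, then also $q = 0$ (from $|f|=|q|$), so $A \equiv 0$, and then $(\beta\otimes\beta)_0 = 0$ forces $\beta = 0$; hence $\nabla\varphi = 0$ and $(g,\varphi)$ is a parallel spinor, i.e.\ an absolute minimiser by Proposition~\ref{tt.tensor}. If $f \neq 0$, then $q\neq 0$ and $\beta\neq 0$; moreover from $fq = -(\beta\otimes\beta)_0$ the symmetric part $A^{0,1}$ is proportional to $(\beta\otimes\beta)_0$, so $A$ has the specific rank-one-plus-scalar structure of~\eqref{A.crit.point}: concretely $\beta^\sharp \in \ker A$ exactly as in Remark~\ref{A.beta.crit.point}. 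At this point the remaining task is bookkeeping: one writes $\beta = c_1\alpha_1 + c_2\alpha_2$ and matches $A$, $\beta$ against the explicit forms~\eqref{A.crit.point},~\eqref{beta.crit.point}, recovers the conditions~\eqref{conditions} $\alpha_1\perp\alpha_2$, $|\alpha_1|=|\alpha_2|$, $\theta-\pi/4\in(\pi/2)\Z$, and checks that the lattice $\Gamma$ is forced to cover $T_\ell$ as in the analysis following~\eqref{compat}.

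The main obstacle I expect is the last step: showing that an abstract flat critical point with $\beta\neq 0$ is not merely \emph{algebraically} of the shape~\eqref{A.crit.point} at each point, but genuinely arises from the Ansatz~\eqref{crit.phi.ansatz}, including that the spinor $\varphi$ itself (not just the pair $(A,\beta)$) has the stated form and descends to the correct torus with the correct spin structure. The pair $(A,\beta)$ determines $\varphi$ only up to right multiplication by a unit quaternion (Section~\ref{prelim.spinor.surface}), so one must integrate~\eqref{nabdec} explicitly — choosing the parallel frame $\psi_1,\psi_2$, the parallel vector field $V$ with $\psi_2 = V\cdot\psi_1$, and the linear functions $\alpha_i$ — and then verify periodicity against $\Gamma$. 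This is essentially reversing the computation of Section~\ref{nonmin.crit.pts}, and the care needed is in the holonomy/periodicity argument rather than in any single identity.
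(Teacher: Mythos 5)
Your opening reduction is fine, and the later steps ($(A^tA+\beta\otimes\beta)_0=0$, $\det A=0$, hence $|f|=|q|$) are also in the paper, but there is a genuine error right at the start of the analysis that derails the whole argument.

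You assert that $\div A = 0$ translates via Lemma~\ref{div.free} into $\bar\partial f = 0$ \emph{and} $\bar\partial q = 0$, and conclude that $f$ and $q$ are both constant. This is not what the lemma gives. Lemma~\ref{div.free} identifies $\div A^{1,0}$ with $\bar\partial f$ and $\div A^{0,1}$ with $\overline{\bar\partial q}$, both regarded as sections of $TM^{1,0}$ under the real isomorphism $TM \cong TM^{1,0}$. The Euler--Lagrange equation only gives $\div A = \div A^{1,0}+\div A^{0,1}=0$, i.e.\ the \emph{single} equation $\bar\partial f + \overline{\bar\partial q}=0$, or in the paper's coordinates $\partial_{\bar z} f + \partial_z\bar h = 0$. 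It does not force each summand to vanish. Indeed, for the non-minimising critical points of Section~\ref{nonmin.crit.pts} one computes from the explicit $A$ of~\eqref{A.crit.point} (via~\eqref{deco.endo}) that $f = e^{i\tau(y)}/2$ with $\tau'(y)=2$; so $f$ is genuinely non-constant. If you push your assumption ``$f,q$ constant, $\beta$ parallel'' through the integrability condition of Proposition~\ref{int.cond.Abeta} you obtain $A(\beta^\sharp)=0$ and $A(J\beta^\sharp)=0$, hence $A\equiv0$ whenever $\beta\neq0$, which contradicts the existence of the non-minimising examples. So the error is fatal, not cosmetic.

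The fix — which is the route the paper actually takes — is to keep only the single PDE $\partial_{\bar z} f + \partial_z\bar h = 0$, and to extract the \emph{constancy of the product $fh$} (not of $f$ and $h$ separately) from the additional equation $\div(A^tA)_0=0$, which follows from flatness and Corollary~\ref{cor_gauss}\eqref{K.div.AA}. Combined with $|f|=|h|$ from $\det A = 0$, one writes $f = e^{i\tau}/2$, $h = \bar f$, and only then does the PDE reduce to $\partial_x\tau=0$, i.e.\ $\tau=\tau(y)$. The integrability condition then pins down $\tau'(y)=2$ and yields exactly the Ansatz of Section~\ref{nonmin.crit.pts}. Your concern about the final integration/periodicity step is reasonable in principle, but the paper handles it by matching $(A,\beta)$ against~\eqref{A.crit.point} and~\eqref{beta.crit.point} directly, which works because $(A,\beta)$ determines the unit spinor up to a global unit quaternion.
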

\begin{proof}
Let $(g,\varphi)$ be a critical point on $M=T^2$ with vanishing Gau{\ss} curvature and associated pair $(A,\beta)$. The Euler-Lagrange equation implies
\begin{gather*}
d\beta = \div \beta=0\\
\div A = \div A^{1,0} + \div A^{0,1} = 0.
\end{gather*}
Furthermore, together with $K=0$ and \cref{cor_gauss},
\ben
\div (A^tA)_0 =0.
\ee
On $M=T^2$ we may trivialize $T^*\!M^{1,0}$ and write $q = h \, dz^2$ for $h = c -id$ globally. Then $\div A=0$ yields the equation $\partial_{\bar z} f + \partial_z \bar h =0$. The traceless symmetric endomorphism $(A^tA)_0$ corresponds to the quadratic differential $fq = fh \, dz^2$ and $\div(A^tA)_0=0$ yields the holomorphicity of $fh$. In particular we get $fh=c$ for some constant $c\in\C$.
Moreover, by \cref{cor_gauss} again, $K=0$ also yields
\ben
\det A = \det A^{1,0} + \det A^{0,1} =0.
\ee
Consequently, $|f|^2=|h|^2=|c|$, for $\det A^{1,0} = |f|^2$ and $\det A^{0,1} = -|h|^2$. Rotating the coordinate system if necessary we may assume that $c$ is a non-negative real number. If $c=0$, then $A=\beta=0$ and we have an absolute minimiser, so assume from now on that $c>0$. We want to show that $(g,\phi)$ is of the form of the critical points in \cref{nonmin.crit.pts}. Scaling the metric on the spinor bundle appropriately we may assume that $c=1/4$. Writing $f(x,y)=e^{i\tau(x,y)}/2$ for $\tau:T^2 \to\R$, we have $h=\bar f$ and $\partial_{\bar z}f+\partial_z\bar h =0$ if and only if $\partial_{\bar z}\tau+\partial_z\tau=\partial_x\tau=0$, whence $\tau\equiv\tau(y)$. It follows that
\ben
A=\begin{pmatrix}
\cos\tau(y) & 0\\
\sin\tau(y) & 0
\end{pmatrix}.
\ee
Further, $\beta$ is parallel since $K=0$ and $\beta$ is harmonic. Since
$$
(A^tA +\beta\otimes\beta)_0=0
$$
we obtain $\beta=dy$. The integrability condition of \cref{int.cond.Abeta} now reads
\ben
\nabla_{\partial_y}A(\partial_x)=2\beta(\partial_y)J(A(\partial_x)) 
\ee
which implies $\theta'(y) = 2\beta(\partial_y) =2$. Hence $A$ and $\beta$ are as in \cref{A.crit.point} and \cref{beta.crit.point} for $V=\partial_y$, $\theta=\pi/4$, $\alpha_1=dx+dy$ and $\alpha_2=dy-dx$.
\end{proof}

\begin{remark}
As we have remarked in \cref{spin.stru.tori}, non-trivial parallel spinors only exist for the non-bounding spin structure. However, by the previous proposition, flat critical points can also exist for the bounding spin-structures.
\end{remark}

It remains an open question if a critical point on the torus is necessarily flat, but at least we can give a number of equivalent conditions.

\begin{proposition}\label{class.crit.point}
For a critical point $(g,\phi)$ on the torus which is associated with $(A,\beta)$, the following conditions are equivalent. 
\begin{enumerate}[{\rm (i)}]
	\item $g$ is flat.
	\item $|\beta|=const$.
	\item $\beta^\sharp\in\ker A$.
\end{enumerate}
Moreover, any of these conditions implies 
\beq\label{Anorm.betanorm}
|A|^2 = |\beta|^2.
\ee
\end{proposition}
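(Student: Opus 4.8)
The plan is to show that each of (i), (ii), (iii) is equivalent to flatness $K\equiv0$ of $g$, and that flatness in turn forces all three conditions together with $|A|^2=|\beta|^2$; this gives the proposition. The main tools are: for a critical point, $\beta$ is a harmonic $1$-form (\cref{beta-is-harmonic}) and $K=4\det A$ (\cref{cor_gauss} \eqref{K.detA}); the Weitzenb\"ock formula $\Delta\beta=\nabla^*\nabla\beta+K\beta$ from \cref{weitzenboeck}; and $\int_MK=0$ by Gauss--Bonnet, since $\chi(T^2)=0$.

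First I would dispose of the case $\beta\equiv0$: then $(g,\phi)$ is critical with vanishing $\beta$, hence an absolute minimiser by \cref{tt.tensor}, so $D_g\phi=0$, and since a unit harmonic spinor on the torus is parallel we get $\nabla^g\phi=0$, i.e.\ $A=0$, and $K\equiv0$ by \cref{curvature} \eqref{K.A.beta}. All of (i)--(iii) and $|A|^2=|\beta|^2=0$ then hold trivially, so from now on I may assume $\beta\not\equiv0$. As $\beta$ is a nonzero harmonic $1$-form on a genus-one surface, it corresponds to a nowhere-vanishing holomorphic $1$-form (the bundle $\kappa_1$ has degree $2\gamma-2=0$); in particular $\beta$ has no zeros.

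Next I would prove that (i) implies (ii), (iii) and $|A|^2=|\beta|^2$. If $K\equiv0$ then $\nabla^*\nabla\beta=\Delta\beta=0$, and integrating $\langle\nabla^*\nabla\beta,\beta\rangle$ over $M$ gives $\nabla\beta=0$; in particular $|\beta|$ is constant, which is (ii). Substituting $\nabla_{J(\,\cdot\,)}\beta=0$ into the critical-point equation $(\nabla_{J(\,\cdot\,)}\beta)^{sym}=2(A^tA+\beta\otimes\beta)_0$ of \cref{beta-is-harmonic} gives $(A^tA)_0=-(\beta\otimes\beta)_0$, while $K=4\det A=0$ forces $\det A=0$, hence $\det(A^tA)=(\det A)^2=0$. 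Working pointwise in an orthonormal frame with $e^1=\beta/|\beta|$ one computes $A^tA=\tfrac12(|A|^2-|\beta|^2)\,e^1\otimes e^1+\tfrac12(|A|^2+|\beta|^2)\,e^2\otimes e^2$; since $\det(A^tA)=0$ and $|\beta|>0$, this forces $|A|^2=|\beta|^2$ and $A^tA=|\beta|^2\,e^2\otimes e^2$. Hence $A^tA\beta^\sharp=0$, so $\langle A\beta^\sharp,A\beta^\sharp\rangle=\langle A^tA\beta^\sharp,\beta^\sharp\rangle=0$ and $A\beta^\sharp=0$, which is (iii). This also proves the ``moreover'' statement.

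Finally I would close the loop. For (ii) $\Rightarrow$ (i): if $|\beta|$ is constant, the Bochner identity $0=\int_M\langle\Delta\beta,\beta\rangle=\int_M|\nabla\beta|^2+\int_MK|\beta|^2=\int_M|\nabla\beta|^2$ (using $\int_MK=0$) shows $\beta$ is parallel, so $K\beta=\Delta\beta-\nabla^*\nabla\beta=0$ pointwise; as $\beta$ is nowhere zero, $K\equiv0$. For (iii) $\Rightarrow$ (i): if $\beta^\sharp\in\ker A$ at every point and $\beta$ has no zeros, then $A$ is singular everywhere, so $\det A\equiv0$ and $K=4\det A\equiv0$. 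Together with the previous step this gives (i) $\Leftrightarrow$ (ii) $\Leftrightarrow$ (iii), and $|A|^2=|\beta|^2$ holds since each condition implies (i). I expect the only real obstacle to be the pointwise linear algebra in the (i) $\Rightarrow$ (iii) step: the key is that a symmetric positive semidefinite tensor with vanishing determinant has rank one, of the form $\mu\,n\otimes n$ for a unit covector $n$, so matching traceless parts with $-(\beta\otimes\beta)_0$ simultaneously pins down $\mu=|\beta|^2$ and $n\perp\beta$.
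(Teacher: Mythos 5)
Your proof is correct, and it takes a noticeably more self-contained route than the paper's in the implication (i)~$\Rightarrow$~(iii). The paper derives $\beta^\sharp\in\ker A$ for a flat critical point by appealing to \cref{A.beta.crit.point}, which in turn rests on the explicit formul{\ae} \eqref{A.crit.point}--\eqref{beta.crit.point} for the ansatz of \cref{nonmin.crit.pts} together with the classification \cref{classification}; you instead give a direct pointwise linear-algebra argument. From $K\equiv0$ you deduce $\nabla\beta=0$ and feed it into the Euler--Lagrange equation to get $(A^tA)_0=-(\beta\otimes\beta)_0$, combine this with $\det A=K/4=0$ (so $\det(A^tA)=(\det A)^2=0$), and then exploit the diagonal form of $A^tA$ in a frame aligned with $\beta$ to force $|A|^2=|\beta|^2$ and $A\beta^\sharp=0$ simultaneously --- this delivers both (iii) and the ``moreover'' in one pass, with no reliance on the classification theorem. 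You also streamline (ii)~$\Rightarrow$~(i): after $\nabla\beta=0$ you conclude $K\beta=\Delta\beta-\nabla^*\nabla\beta=0$ pointwise and finish directly, rather than the paper's detour via $\div(A^tA)_0=0$ and \cref{cor_gauss}~\eqref{K.div.AA}. Finally, by observing at the outset that a nonzero harmonic $1$-form on a genus-one Riemann surface is nowhere zero (since $\deg\kappa_1=0$), you replace the paper's pointwise dichotomy ``$\beta(x)=0$ or $K(x)=0$'' in (iii)~$\Rightarrow$~(i) with a single clean global statement; both are valid, and your preliminary disposal of the case $\beta\equiv0$ --- reducing it via \cref{tt.tensor} and Schr\"odinger--Lichnerowicz/Gau\ss{}--Bonnet to a parallel spinor --- keeps the bookkeeping honest for each implication.
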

\begin{proof}
If $(g,\varphi)$ is a flat critical point, then $\beta$ is parallel and hence has constant length. Conversely, if $(g,\phi)$ is critical with $|\beta|=const$, then the Weitzenb\"ock formula on $1$-forms~\eqref{weitzenboeck} and Gau{\ss}-Bonnet immediately imply that $\nabla\beta=0$. Therefore, $(A^tA+\beta\otimes\beta)_0=0$, whence $\div(A^tA)_0=0$ by \cref{beta-is-harmonic} \eqref{div.beta.beta}. But either $\beta\equiv0$ so that $\nabla\phi=0$ by~\cref{tt.tensor}, or $\beta$ has no zeroes at all and we can apply \cref{cor_gauss}. In both cases it follows $K=0$.

On the other hand, for a flat critical point $(g,\varphi)$, $\beta^\sharp\in\ker A$ follows from \cref{A.beta.crit.point}. Conversely, let $\beta^\sharp\in\ker A$. If $\beta(x)=0$, then~\eqref{weitzenboeck} implies $\nabla^*\nabla\beta(x)=0$. Otherwise, $\beta(x)$ is a non-trivial element in the kernel of $A$ so that $\det A(x)=K(x)=0$ by \cref{cor_gauss} \eqref{K.detA}. Again, we find $\nabla^*\nabla\beta(x)=0$ so that $\beta$ is actually parallel. Then either $\beta\equiv0$ and $g$ is flat (for in this case $(g,\phi)$ is an absolute minimiser), or $\beta$ is nowhere vanishing so that $K=\det A\equiv0$.

If any of these equivalent conditions holds, then $(A^tA + \beta \otimes \beta)_0=0$ and $\beta^\sharp \in \ker A$, whence
\begin{align*}
0=\langle\beta^\sharp,(A^tA + \beta \otimes \beta)_0 \beta^\sharp \rangle&=|A (\beta^\sharp)|^2 +|\beta|^4 - \tfrac 12 \tr (A^tA) |\beta|^2 - \tfrac 12 |\beta|^4\\
&=\tfrac 12|\beta|^2\big(|\beta|^2-|A|^2\big).
\end{align*}
This implies $|A|=|\beta|$ or $\beta=0$. In the latter case \cref{tt.tensor} implies $A=0$.
\end{proof}
\bigskip
\paragraph{\bf Acknowledgements.} The authors thank T.\ Friedrich for providing interesting references and the referee for carefully reading the manuscript.
%


\begin{thebibliography}{99}
\bibitem{ammann.dahl.humbert:09}
{\sc Ammann, B., Dahl, M., and Humbert, E.},
{\em Surgery and harmonic spinors},
\newblock {Adv. Math.} {\bf 220} (2009), 523--539.

\bibitem{amwewi12}
{\sc B.\ Ammann, H.\  Wei{\ss}, and F.\ Witt},
{\em A spinorial energy functional: critical points and gradient flow},
Preprint 2012, \href{http://www.arxiv.org/abs/1207.3529}{arXiv:1207.3529}.

\bibitem{at71}
{\sc M.\ Atiyah},
{\em Riemann surfaces and spin structures},
Ann.\ Sci.\ \'Ecole Norm.\ Sup.\ (4) {\bf4} (1971), 47--62. 

\bibitem{baer:98}
{\sc C. B\"ar}, 
\emph{Extrinsic bounds for eigenvalues of the {D}irac operator}, Ann.
Global Anal. Geom. \textbf{16} (1998), 573--596.

\bibitem{baer.schmutz:92}
{\sc C. B\"ar and P. Schmutz}, 
\emph{Harmonic spinors on {R}iemann surfaces}, Ann.
Global Anal. Geom. \textbf{10} (1992), 263--273.

\bibitem{ba81}
{\sc H.\ Baum},
{\em Spin-Strukturen und Dirac-Operatoren \"uber pseudoriemannschen Mannigfaltigkeiten}, 
Teubner-Texte zur Mathematik, {\bf 41}. BSB B.\ G.\ Teubner, Leipzig, 1981. 

\bibitem{bfgk91}
{\sc H.\ Baum, T.\ Friedrich, R.\ Grunewald and I.\ Kath},
{\em Twistors and Killing spinors on Riemannian manifolds},
Teubner-Texte zur Mathematik, {\bf124}, B.\ G.\ Teubner, Stuttgart, 1991. 

\bibitem{fo81}
{\sc O.\ Forster}, 
{\em Lectures on Riemann surfaces}, 
Graduate Texts in Mathematics, {\bf81}, Springer-Verlag, Berlin, 1981. 

\bibitem{fr84}
{\sc T.\ Friedrich},
{\em Zur Abh\"angigkeit des Dirac-Operators von der Spin-Struktur},
Colloq.\ Math.\ {\bf48} (1984), no.\ 1, 57--62.

\bibitem{fr90}
{\sc T.\ Friedrich},
{\em On the conformal relation between twistors and Killing
spinors}, Suppl. Rend. Circ. Mat. Palermo, II. Ser. 22, 59--75 (1990).
Avaliable on \url{https://eudml.org/doc/220945}

\bibitem{fr98}
{\sc T.\ Friedrich},
{\em On the spinor representation of surfaces in Euclidean $3$-space},
J.\ Geom.\ Phys.\ {\bf28} (1998), no.\ 1-2, 143--157. 

\bibitem{fr00}
{\sc T.\ Friedrich},
{\em Dirac operators in Riemannian geometry},
Graduate Studies in Mathematics {\bf 25}, AMS, Providence, 2000.

\bibitem{gigr10}
{\sc N.\ Ginoux, J.-F. Grosjean},
{\em Almost harmonic spinors}, 
C.\ R.\ Math.\ Acad.\ Sci.\ Paris {\bf 348} (2010), 811--814.

\bibitem{gu66}
{\sc R.\ Gunning},
{\em Lectures on Riemann surfaces},
Princeton Univ. Press, New Jersey, 1966.

\bibitem{habermann:90}
{\sc K. Habermann}, 
{\em The twistor equation of Riemannian manifolds},
J. Geom. Phys. {\bf 7}, 469--488 (1990).

\bibitem{hi74}
{\sc N.\ Hitchin},
{\em Harmonic spinors},
Adv.\ in Math. {\bf14} (1974), 1--55. 

\bibitem{kenmotsu:79} {\sc K.~Kenmotsu},
{\em Weierstrass formula for surfaces of prescribed mean curvature},
Math. Ann. {\bf 245} (1979), 89--99.

\bibitem{kusner.schmitt:p95}
{\sc R.~Kusner and N.~Schmitt,}
\emph{The spinor representation of minimal surfaces}, 
preprint, http://www.arxiv.org/abs/dg-ga/9512003, 1995.

\bibitem{lami89}
{\sc H.\ Lawson and M.-L.\ Michelsohn},
{\em Spin geometry}, 
Princeton Univ. Press, New Jersey, 1989.

\bibitem{martens:68}
{\sc H. Martens},
{\em Varieties of special divisors on a curve. {II}},
J. Reine Angew. Math. {\bf 233} (1968), 89--100.

\bibitem{mi65}
{\sc J.\ Milnor},
{\em Remarks concerning spin manifolds}, 
1965 Differential and Combinatorial Topology (A Symposium in Honor of Marston Morse) pp. 55--62 Princeton Univ. Press, New Jersey.

\bibitem{schmitt:diss}
{\sc N.~Schmitt}, \emph{Minimal surface with planar embedded ends}, {P}h.{D}.\ dissertation, University of Amherst, 1993.

\bibitem{trautman:92}
{\sc A.~Trautman}, \emph{Spinors and the Dirac operator on hypersurfaces. I. General theory},
J. Math. Phys. {\bf 33} (1992), 4011--4019.

\bibitem{trautman:95}
{\sc A.~Trautman}, \emph{The Dirac operator on hypersurfaces},
Acta Phys. Polonica B {\bf 26} (1995), 1283--1310.

\bibitem{tr08}
{\sc M.\ Traizet},
{\em On the genus of triply periodic minimal surfaces},
J.\ Differential.\ Geom. {\bf 79} (2008), no.\ 2, 243--75.
\end{thebibliography}
\end{document}